%% file: manuscript.tex
\documentclass[a4paper]{amsart}

\usepackage[UKenglish]{babel}
\usepackage[T1]{fontenc}
\usepackage{lmodern}
\usepackage{microtype}
\usepackage{amsmath,amsthm,amsfonts,amssymb}
\usepackage{mathtools}
\usepackage{centernot}
\usepackage{graphicx}
\usepackage{float}
\usepackage{tikz}
\usepackage{enumitem}
\usepackage{booktabs}
\usepackage{xcolor}
\usepackage{hyperref}
\usepackage{cancel}

\usepackage[capitalise,nameinlink,noabbrev]{cleveref}

\hypersetup{
    colorlinks=true,
    linkcolor=blue!50!black,
    citecolor=blue!50!black,
    urlcolor=blue!50!black
}

\newtheorem{theorem}{Theorem}[section]
\newtheorem{corollary}[theorem]{Corollary}
\newtheorem{lemma}[theorem]{Lemma}
\newtheorem{definition}[theorem]{Definition}
\newtheorem{question}[theorem]{Question}
\newtheorem{proposition}[theorem]{Proposition}
\newtheorem{remark}[theorem]{Remark}

\newcommand{\K}{\mathfrak{K}}
\newcommand{\A}{\mathcal{A}}

\newcommand{\TLH}{\mathsf{TLH}}
\newcommand{\baire}{\omega^{<\omega}}
\newcommand{\Baire}{\omega^\omega}
\newcommand{\Cantor}{2^{\omega}}
\newcommand{\cantor}{2^{<\omega}}
\newcommand{\define}[1]{\textbf{#1}}
\newcommand{\learnerM}{\mathbf{M}}
\newcommand{\ex}{\mathbf{Ex}}
\newcommand{\bc}{\mathbf{BC}}
\newcommand{\Learn}{\mathrm{Learn}}
\newcommand{\ce}{\mathrm{ce}}
\newcommand{\Id}{\mathrm{Id}}
\newcommand{\learnreducible}{\leq_{\Learn}}

\newcommand{\nEce}[1]{\mathrel{\cancel{{#1}^{\ce}}}}

\allowdisplaybreaks

\title[Tolerant learning hierarchy for c.e.\ sets]{The Tolerant Learning Hierarchy for c.e.\ Sets}

\author[V. Cipriani]{Vittorio Cipriani}
\address{Institute of Discrete Mathematics and Geometry, TU Wien, Vienna, Austria}
\email{\href{mailto:vittorio.cipriani17@gmail.com}{vittorio.cipriani17@gmail.com}}

\author[M. Ritter]{Martin Ritter}
\address{Institute of Discrete Mathematics and Geometry, TU Wien, Vienna, Austria}
\email{\href{mailto:martin.ritter@tuwien.ac.at}{martin.ritter@tuwien.ac.at}}

\author[L. San Mauro]{Luca San Mauro}
\address{Department of Humanistic Research and Innovation, University of Bari, Bari, Italy}
\email{\href{mailto:lucafrancesco.sanmauro@uniba.it}{lucafrancesco.sanmauro@uniba.it}}

\keywords{Algorithmic learning theory, anomalous learning, computably enumerable sets, equivalence relations}

\subjclass[2020]{68Q32, 03D30, 03E15}

\thanks{The first and second author's research was partially supported by the Austrian Science Fund (FWF)
10.55776/P36781. The third author is a member of INDAM--GNSAGA}

\begin{document}

\begin{abstract}
Learning from positive data models a scenario in which a learner receives longer and longer initial segments of an enumeration of an unknown set and repeatedly outputs a conjecture about it. Classical identification in the limit requires the learner to eventually stabilise to a hypothesis naming the target exactly, while anomalous learning relaxes this requirement by allowing finitely many errors in the final hypothesis. We develop a natural generalisation for computably enumerable (c.e.) sets in which correctness is determined by an arbitrary equivalence relation on their indices that respects equality of the sets they enumerate. We call the resulting framework tolerant learning. Exact learning and finite-anomaly learning arise as special cases. By comparing tolerance relations according to the families they make learnable, we obtain a new degree structure, the tolerant learning hierarchy.

We extend the classical methods of locking sequences and tell-tales to this setting and derive a combinatorial characterisation of those tolerance relations that are universal (that is, under which every family of nonempty computably enumerable sets is learnable). We then show that the hierarchy is a bounded lower semilattice but not an upper semilattice, is atomless and coatomless, and contains chains and antichains of size continuum. We also analyse natural benchmark relations, including c.e.-index analogues of standard equivalence relations from descriptive set theory, and establish substantial incomparability among their learning powers. Finally, we separate non-effective from computable tolerant learning by exhibiting a universal relation that is not computably universal. These results show that semantic tolerance gives rise to a genuine degree theory at the interface of algorithmic learning and the theory of equivalence relations.
\end{abstract}
\maketitle

\section{Introduction}

Algorithmic learning theory asks when an idealised learner can identify an unknown object from a growing stream of information.  In Gold's model of identification in the limit \cite{Gold67}, the learner receives longer and longer finite pieces of data, may revise its conjecture finitely many times, and succeeds if its hypotheses eventually stabilise to a correct one.  Putnam's trial-and-error predicates \cite{putnam1965trial} provide a closely related logical viewpoint on eventual convergence to truth.  Standard background can be found in \cite{jain1999systems}.

Two classes of targets have dominated the classical theory: computable functions and computably enumerable sets, or languages.  Computable functions are central in prediction and program-synthesis interpretations of inductive inference, already in the work of Blum and Blum \cite{blum1975toward}.  Computably enumerable (c.e.) sets are the natural targets for learning from \emph{positive} data: a text for a language presents examples from the language and never certifies non-membership.  This is the setting of Gold's language-learning model and Angluin's tell-tale theorem \cite{Gold67,angluin1980inductive}; for a survey of learning indexed families from positive data, see \cite{LangeZeugmannZilles2008Survey}.  We work throughout with c.e.\ sets, viewed via a fixed standard enumeration $(W_e)_{e\in\omega}$.

The classical success criterion is exact identification: eventually the learner must output an index for precisely the target set.  This is the usual $\ex$ (explanatory) convergence requirement; by contrast, $\bc$ (behaviourally correct) learning asks only that, from some point on, all hypotheses be semantically correct, possibly without stabilising to one index.  The tolerant criterion introduced below retains the syntactic stabilisation requirement of $\ex$-learning and varies only the semantic criterion of correctness.  A well-established weakening is \emph{anomalous learning}, where finitely many errors are allowed in the final hypothesis.  In the standard anomalous notation, $\ex^n$ and $\bc^n$ allow at most $n$ anomalies, while $\ex^*$ and $\bc^*$ allow an arbitrary finite number.  For computable functions, Case and Smith \cite{case1978anomaly,case1983comparison} developed the comparison theory of these criteria, including the anomaly hierarchy and the role of $\bc^*$.  For language learning from positive data, approximate correctness was already investigated by Osherson and Weinstein \cite{osherson1982criteria}; finite-anomaly learning, succinctness phenomena for anomalous learners, and robust variants were studied further; see, for example, \cite{case1982machine,caseJainSharma1996Anomalous,jain1999systems,JainStephan2003TourRobust,LangeGrieserZeugmann2005Approx}.  Approximation criteria allowing controlled infinite error sets were also studied in \cite{royer1986Approximations,smithVelauthapillai1990ApproximatePrograms}.  Earlier approximate perspectives go back at least to Feldman \cite{feldman1972some} and Wharton \cite{wharton1974approximate}.

The point of departure of the present paper is that finite symmetric difference is only one possible semantics of approximation.  Once hypotheses are indices for c.e.\ sets, any equivalence relation $E^{\ce}$ on indices can be used as a tolerance relation: a final conjecture $i$ is correct for a target indexed by $e$ exactly when $i\mathrel{E^{\ce}}e$.  We require $E^{\ce}$ to extend equality of c.e.\ sets, denoted $=^{\ce}$, so that the criterion depends only on the c.e.\ sets named by indices.  Exact learning is obtained from $=^{\ce}$, and $\ex^*$-learning is obtained from the relation $E^{\ce}_0$ of finite symmetric difference.  This leads to the basic notion studied here: learning c.e.\ sets \emph{tolerantly}, that is, up to a prescribed equivalence relation on indices.

This formulation is useful for two reasons.  First, it separates the learning mechanism from the semantics of correctness.  The same stream of positive data may support different identification tasks depending on whether final hypotheses are compared by equality, by finite error, by columnwise finite error, by equality of sets of columns, or by some other invariant (formal definitions of these equivalence relations are provided later).  Secondly, it gives a natural way to compare tolerance notions.  We write
\[
E^{\ce}\leq_{\Learn}F^{\ce}
\]
when every family of nonempty c.e.\ sets learnable up to $E^{\ce}$ is also learnable up to $F^{\ce}$.  Modulo mutual reducibility this preorder yields the \define{tolerant learning hierarchy}
\[
\TLH=(\mathcal E^{\ce},\leq_{\Learn}).
\]
Thus the objects of the hierarchy are not target families, but semantic criteria for eventual correctness.

A major theme of the paper is that the natural benchmarks for this hierarchy are not arbitrary.  Several are c.e.\ analogues of the fundamental equivalence relations that organise descriptive set theory.  The relation $E_0$ of eventual agreement on reals becomes, in the c.e.\ setting, equality modulo finite symmetric difference.  The relations $E_1$, $E_3$, and $E_{\mathrm{set}}$ have column-based c.e.\ counterparts: eventual equality of columns, columnwise finite difference, and equality of the set of c.e.\ columns.  Their classical counterparts are standard reference points under Borel and continuous reducibility, with $E_0$ playing its canonical role in the Glimm--Effros dichotomy \cite{harringtonKechrisLouveau1990GlimmEffros}, and they also occur in the descriptive-set-theoretic approach to learning algebraic structures, where $E$-learnability calibrates a family by reducing its isomorphism relation to a benchmark equivalence relation \cite{BazhenovCiprianiSanMauro2023Borel,BazhenovCiprianiJainSanMauroStephan2026Classifying}.  Treating their c.e.\ versions as benchmark tolerance relations lets us ask whether the learning order reflects the familiar descriptive-set-theoretic order or instead produces genuinely new comparisons.  One of our conclusions is that it produces a genuinely new order.

Although our targets are c.e.\ sets rather than algebraic structures, the present perspective is informed by that broader programme.  Learning of structures from text or informant has been developed for specific algebraic classes and for general logical characterisations \cite{StephanVentsov2001LearningText,MerkleStephan2004Trees,HarizanovStephan2007VectorSpaces,FokinaKotzingSanMauro2019EquivalenceStructures,BazhenovFokinaSanMauro2020Informant,BazhenovFokinaRosseggerSoskovaVatev2024Text}.  Later work measured the Turing and mind-change complexity of such learning and used Borel equivalence relations as benchmarks for learning strength \cite{BazhenovSanMauro2021TuringComplexity,BazhenovCiprianiSanMauro2022MindChange,BazhenovCiprianiSanMauro2023Borel,BazhenovCiprianiJainSanMauroStephan2026Classifying}.  Here we move the comparison to c.e.\ indices themselves: the semantic equivalence relation is not an external benchmark for a class of structures, but the correctness criterion governing convergence of hypotheses.

There is also a close but distinct connection with computable reducibility on equivalence relations on $\omega$.  Important c.e.-index relations were analysed by Fokina, Friedman, and Nies \cite{FokinaFriedmanNies2012Sigma3}, and the computable-reducibility hierarchy has been studied extensively; see, for example, \cite{gao2001computably,AndrewsBadaevSorbi2017Survey,coskey2012hierarchy,andrews2023structure,andrews24FSjump}.  Our comparison is different in kind.  Computable reducibility asks for a uniform computable translation of classifications.  Learn-reducibility asks whether one correctness semantics makes at least as many positive-data learning problems solvable as another.  Neither relation should be expected to determine the other.

The closest prior work of which we are aware is \cite{BelangerGaoJainLiStephan2023}, where positive equivalence relations are used to study classical learnability notions for one-one numbered c.e.\ families closed under the relation.  Learning equivalence relations themselves has also recently been studied in a Polish-space setting \cite{RosseggerSlamanSteifer2025Polish}.  Our setting is broader in two respects.  We allow arbitrary equivalence relations on c.e.\ indices extending $=^{\ce}$, and our main object is the global preorder of all such relations induced by learnability rather than the behaviour of a fixed family under a fixed relation.

At first sight one might expect the hierarchy to be tame.  Positive-data learning is constrained by a compactness phenomenon: if every finite fragment of a target can be extended to a wrong proper subtarget, no learner can know when to stop revising.  Weakening correctness might seem either too mild to matter or so coarse that the theory quickly collapses.  Our results show that neither expectation is right.  The classical obstruction persists, but the space of possible tolerance semantics has a rich internal degree structure.

We now describe the main results.

\begin{enumerate}
    \item We generalise \emph{locking sequences} \cite{blum1975toward} to arbitrary $E^{\ce}$ and prove the corresponding tell-tale theorem.  A family is $E^{\ce}$-learnable exactly when every target has a finite tell-tale preventing wrong proper subtargets outside its $E^{\ce}$-class.  We then dualise tell-tales into \emph{absorbing sets}, which package the obstruction to learnability in a form well suited for structural arguments.  This yields a useful bridge theorem: if $E^{\ce}<_{\Learn}F^{\ce}$, then there is a non-$E^{\ce}$-learnable family all of whose members lie in a single $F^{\ce}$-class.
\item A relation is \emph{universal} if every family of nonempty c.e.\ sets is learnable up to it.  We characterise universality by a simple locking property for increasing finite approximations: whenever finite sets increase to a nonempty c.e.\ set $W_e$, their indices must eventually fall into the $E^{\ce}$-class of $e$.  This theorem clarifies exactly how coarse a tolerance relation must be in order to defeat the usual positive-data obstruction.
\item The tolerant learning hierarchy is bounded, with bottom $[=^{\ce}]_{\equiv_{\mathrm{Learn}}}$ and top $[\Id_1]_{\equiv_{\mathrm{Learn}}}$.  It is a lower semilattice, and meets are represented by intersections of equivalence relations.  It is not an upper semilattice: indeed, we construct two degrees whose common upper bounds have no minimal element.  The hierarchy is also atomless and coatomless, and both its width and its height are $2^{\aleph_0}$.  The proofs use absorbing witnesses to embed large combinatorial orders into principal initial and final segments of $\TLH$.
\item We locate the c.e.\ analogues of the Borel benchmarks $E_0$, $E_1$, $E_3$, and $E_{\mathrm{set}}$, as well as the relations $E^{\ce}_{\min}$ and $E^{\ce}_{\max}$.  Finite-error tolerance $E^{\ce}_0$ is strictly stronger than exact learning, but it is weak in the sense that no nontrivial quotient of it is universal.  We show that $E^{\ce}_1$, $E^{\ce}_3$, $E^{\ce}_{\mathrm{set}}$, and $E^{\ce}_{\max}$ are pairwise incomparable under $\leq_{\Learn}$, while $E^{\ce}_{\min}$ is universal.  Thus the c.e.\ versions of familiar descriptive-set-theoretic relations do not line up as they do under standard reducibilities.
\item Sections~\ref{sect:prelim}--\ref{sec:comparison} impose no computability condition on learners.  In the final section we revisit universality for computable learners and show that non-effective universality is strictly weaker: if $K$ is the halting set, then the two-class relation $E^{\ce}_K$ induced by containment in $K$ is universal but not computably universal.
\end{enumerate}

\smallskip

The paper is organised as follows.  \Cref{sect:prelim} fixes notation and defines tolerant learning and learn-reducibility.  \Cref{section:characterizations} develops locking sequences, tell-tales, absorbing sets, and universal relations.  \Cref{section:structural_properties} proves the main structural theorems about $\TLH$.  \Cref{sec:comparison} analyses the benchmark relations and contrasts learn-reducibility with Borel, continuous, and computable reducibility.  \Cref{section:effective_learners} separates universal from computably universal tolerance relations.
\section{Preliminaries}\label{sect:prelim}

We denote by $\omega$ the set of natural numbers $\{0,1,2,\dots\}$. Given $A\subseteq\omega$ and $n\in\omega$, we write
\[
A\restriction n:=A\cap\{0,\dots,n-1\}.
\]
We fix a computable bijective pairing function $\langle\cdot,\cdot\rangle\colon\omega^2\to\omega$.

We denote by $\baire$ and $\Baire$, respectively, the sets of all finite and infinite sequences of natural numbers. Given $\sigma\in\baire$, let $|\sigma|$ be its length and, for $n\leq|\sigma|$, let $\sigma[n]$ be the string consisting of $\sigma(0),\dots,\sigma(n-1)$. The concatenation of finite strings $\sigma,\tau\in\baire$ is denoted by $\sigma^\smallfrown\tau$, although we often simply write $\sigma\tau$. If $f\in\Baire$, then $\sigma^\smallfrown f$ denotes the corresponding infinite concatenation. We define
\[
\operatorname{range}(\sigma):=\{\sigma(n):n<|\sigma|\}.
\]
For $\sigma,\tau\in\baire$, we write $\sigma\sqsubseteq\tau$ if $\sigma$ is an initial segment of $\tau$, and $\sigma\sqsubset\tau$ if it is a proper initial segment. If neither string extends the other, we say that they are \define{incomparable} and write $\sigma\mid\tau$. For $f\in\Baire$ and $\sigma\in\baire$, we write $\sigma\sqsubseteq f$ (equivalently, $f\sqsupseteq\sigma$) if $f[|\sigma|]=\sigma$.

A \define{tree} is a nonempty set $T\subseteq\baire$ closed under initial segments. It is a \define{binary tree} if $T\subseteq\cantor$. A real $f\in\Baire$ is a \define{path} through $T$ if $f[n]\in T$ for every $n\in\omega$. The \define{body} of $T$, denoted by $[T]$, is the set of all paths through $T$.

An \define{enumeration} of a set $A$ is a surjection $\mu\colon\omega\to A$. As for finite strings, $\mu[n]$ denotes the string $\mu(0),\dots,\mu(n-1)$.

For an equivalence relation $E$ on a set $X$ and an element $x\in X$, we write $[x]_E:=\{y\in X:yEx\}$.

A \define{preorder} on a set $P$ is a reflexive and transitive binary relation $\leq$ on $P$. A \define{partially ordered set}, or \define{poset}, is a pair $(P,\leq)$ in which $\leq$ is also antisymmetric. Given $a,b\in P$, we write $a<b$ if $a\leq b$ and $b\nleq a$, and we say that $a$ and $b$ are \define{incomparable} if $a\nleq b$ and $b\nleq a$. A set $C\subseteq P$ is a \define{chain} if any two of its elements are comparable, and an \define{antichain} if any two distinct elements are incomparable. The \define{height} of $P$ is the supremum of the cardinalities of its chains, and its \define{width} is the supremum of the cardinalities of its antichains.

A poset is \define{bounded} if it has a least and a greatest element, called its \define{bottom} and \define{top}, respectively. If $P$ has bottom $\bot$, an element $a\in P$ is an \define{atom} if $\bot<a$ and there is no $b\in P$ with $\bot<b<a$. Dually, if $P$ has top $\top$, an element $a\in P$ is a \define{coatom} if $a<\top$ and there is no $b\in P$ with $a<b<\top$. A poset is \define{atomless} if it has no atoms and \define{coatomless} if it has no coatoms. It is \define{dense} if, whenever $a<b$, there is some $x$ with $a<x<b$.

A poset $(P,\leq)$ is a \define{lower semilattice} if any two elements $a,b\in P$ have a \define{meet}, that is, a greatest lower bound, denoted by $a\wedge b$. It is an \define{upper semilattice} if any two elements have a \define{join}, that is, a least upper bound, denoted by $a\vee b$. A poset that is both a lower and an upper semilattice is a \define{lattice}.

For any set $X$, we denote by $\mathcal P_{\mathrm{fin}}(X)$ the set of all finite subsets of $X$. We write $(\forall^\infty n)\,\varphi(n)$ to mean that $\varphi(n)$ holds for all but finitely many $n$.

\subsection{The learning paradigm}
\label{sec:learningparadigm}

A set $A\subseteq\omega$ is \define{computably enumerable} (\define{c.e.}) if it is the domain of a partial computable function. We fix a standard enumeration $(\varphi_e)_{e\in\omega}$ of the partial computable functions and put $W_e:=\operatorname{dom}(\varphi_e)$. Thus $(W_e)_{e\in\omega}$ is a standard enumeration of the c.e.\ sets, and distinct indices may enumerate the same set. We write
\[
e=^{\mathrm{ce}}e'\quad\Longleftrightarrow\quad W_e=W_{e'}.
\]
Unless explicitly stated otherwise, every equivalence relation denoted by $E^{\mathrm{ce}}$, $F^{\mathrm{ce}}$, and so on is a relation on $\omega$ extending $=^{\mathrm{ce}}$. No definability or effectivity assumption is imposed on such relations. Since these relations extend $=^{\mathrm{ce}}$, they induce well-defined equivalence relations on c.e.\ sets; accordingly, for c.e.\ sets $A$ and $B$ we may write $A\mathrel{E^{\mathrm{ce}}}B$ to mean that some (equivalently, every) pair of indices for $A$ and $B$ is $E^{\mathrm{ce}}$-equivalent.

In the following definition, and throughout Sections~\ref{sect:prelim}--\ref{sec:comparison}, a \define{learner} is simply a function from $\omega^{<\omega}$ to $\omega$. No effectivity assumption is imposed until Section~\ref{section:effective_learners}.

\begin{definition}
\label{def:learningparadigm}
Let $\K$ be a family of nonempty c.e.\ sets, and let $E^{\mathrm{ce}}$ be an equivalence relation on $\omega$ extending $=^{\mathrm{ce}}$. We say that $\K$ is \define{$E^{\mathrm{ce}}$-learnable} if there is a learner $\learnerM\colon\omega^{<\omega}\to\omega$ such that, for every $e\in\omega$ with $W_e\in\K$ and every enumeration $\mu$ of $W_e$, there is some $i\in\omega$ such that
\[
\lim_{s\to\infty}\learnerM(\mu[s])=i
\quad\text{and}\quad
i\mathrel{E^{\mathrm{ce}}}e.
\]
\end{definition}

Here the displayed limit means eventual constancy. Since $E^{\mathrm{ce}}$ extends $=^{\mathrm{ce}}$, the definition depends only on the family of c.e.\ sets and not on any choice of indices for its members. Throughout the paper, target families are understood to consist of nonempty sets unless explicitly stated otherwise; the ambient enumeration $(W_e)_{e\in\omega}$, the hypotheses output by learners, and the equivalence relations on indices may still involve the empty set.

The relation $=^{\mathrm{ce}}$ yields classical identification in the limit for families of c.e.\ sets; see \cite[Definition~3.12]{jain1999systems}. Another fundamental example is $E^{\mathrm{ce}}_0$, defined by
\[
e\mathrel{E^{\mathrm{ce}}_0}e'
\quad\Longleftrightarrow\quad
|W_e\mathbin{\triangle}W_{e'}|<\infty,
\]
where $\triangle$ denotes the symmetric difference.
Finally, $\Id_1$ denotes the one-class equivalence relation on $\omega$.

\begin{remark}
In \cite[Definition~3.12]{jain1999systems}, learners receive a \define{text}: an infinite sequence in $\omega\cup\{\sharp\}$ whose numerical content is the target set, with $\sharp$ representing a pause in the presentation. For nonempty targets, this model is equivalent to the enumeration model used here.

Indeed, every enumeration is a text without occurrences of $\sharp$. Conversely, let $T$ be a text for a nonempty set $A$, and let $s_0$ be the first stage at which a numerical symbol appears. Before stage $s_0$, a learner may output an arbitrary hypothesis. From stage $s_0$ onward, delete the initial block of $\sharp$ symbols and replace each subsequent occurrence of $\sharp$ by the most recently seen numerical symbol. The resulting infinite sequence is an enumeration of $A$, so a learner for enumerations can be simulated on texts. The converse simulation is immediate.
\end{remark}

Now that we have defined $E^{\mathrm{ce}}$-learnability, we can compare equivalence relations on indices of c.e.\ sets according to the learning power they induce.

\begin{definition}
\label{def:learn-reducibility}
Let $E^{\mathrm{ce}}$ and $F^{\mathrm{ce}}$ be equivalence relations on $\omega$ extending $=^{\mathrm{ce}}$. We say that $E^{\mathrm{ce}}$ is \define{learn-reducible} to $F^{\mathrm{ce}}$, denoted by
\[
E^{\mathrm{ce}}\leq_{\mathrm{Learn}}F^{\mathrm{ce}},
\]
if every family of nonempty c.e.\ sets that is $E^{\mathrm{ce}}$-learnable is also $F^{\mathrm{ce}}$-learnable.
\end{definition}

We write $E^{\mathrm{ce}}\equiv_{\mathrm{Learn}}F^{\mathrm{ce}}$ if both $E^{\mathrm{ce}}\leq_{\mathrm{Learn}}F^{\mathrm{ce}}$ and $F^{\mathrm{ce}}\leq_{\mathrm{Learn}}E^{\mathrm{ce}}$ hold, and we write $E^{\mathrm{ce}}\mathrel{|_{\mathrm{Learn}}}F^{\mathrm{ce}}$ if neither reduction holds.

\smallskip

Three immediate examples help fix the meaning of \Cref{def:learn-reducibility}.
\begin{enumerate}
    \item For every equivalence relation $E^{\mathrm{ce}}\, \supseteq \, =^{\mathrm{ce}}$, we have $=^{\mathrm{ce}}\, \leq_{\mathrm{Learn}}E^{\mathrm{ce}}$: any exact learner is automatically correct up to any coarser tolerance relation.
    \item $E^{\mathrm{ce}}_0$-learnability is exactly $\ex^*$-learning: eventual correctness is understood modulo finite symmetric difference.
    \item Every family of nonempty c.e.\ sets is $\Id_1$-learnable, since a learner may simply stabilise to any fixed index.
\end{enumerate}
Thus $=^{\mathrm{ce}}$ and $\Id_1$ are the natural candidates for the bottom and top of the hierarchy studied below.

Let $\mathcal R^{\mathrm{ce}}$ be the set of all equivalence relations on $\omega$ extending $=^{\mathrm{ce}}$, and let
\[
\mathcal E^{\mathrm{ce}}:=
\mathcal R^{\mathrm{ce}}/{\equiv_{\mathrm{Learn}}}.
\]
The order induced by $\leq_{\mathrm{Learn}}$ on $\mathcal E^{\mathrm{ce}}$ is denoted by $\leq$. We call $(\mathcal E^{\mathrm{ce}},\leq)$ the \define{tolerant learning hierarchy} and denote it by $\TLH$.

\section{Locking sequences and tell-tales}
\label{section:characterizations}

This section isolates the finite obstructions that control tolerant learnability.  The guiding principle is the same as in the classical theory: a successful learner must eventually reach a finite initial segment after which every compatible continuation of the target keeps the learner inside the correct semantic class.  The only change is that correctness is now measured by $E^{\ce}$ rather than by equality of c.e.\ sets.  We first formulate this as a tolerant locking sequence, then turn it into a learner-independent tell-tale theorem.  The dual notion of an absorbing set will be the main technical device in the structural results of \Cref{section:structural_properties}.

\begin{definition}
\label{def:locking_sequence}
Let $\learnerM$ be a learner, let $E^{\mathrm{ce}}$ be an equivalence relation, and let $W_e$ be a c.e.\ set. We say that a finite string $\sigma$ is an $E^{\mathrm{ce}}$-\define{locking sequence} for $\learnerM$ on $W_e$ if:
\begin{enumerate}
    \item[(a)] $\operatorname{range}(\sigma)\subseteq W_e$,
    \item[(b)] $\learnerM(\sigma)\mathrel{E^{\mathrm{ce}}}e$,
    \item[(c)] for every $\tau$, if $\operatorname{range}(\tau)\subseteq W_e$, then $\learnerM(\sigma^\smallfrown\tau)=\learnerM(\sigma)$.
\end{enumerate}
\end{definition}

As in the classical case, if a learner succeeds on a family, then for each target there must be some finite amount of positive data after which the learner is locked into the correct $E^{\mathrm{ce}}$-class. The proof is a natural adaptation of the classical Blum--Blum argument for exact identification of languages; see \cite[Theorem~3.22]{jain1999systems}.

\begin{theorem}
\label{thm:learnability_implies_lock_seq}
Let $E^{\mathrm{ce}}$ be an equivalence relation and $\K$ a family of c.e.\ sets. If a learner $\learnerM$ $E^{\mathrm{ce}}$-learns $\K$, then for every $W_e\in\K$ there is an $E^{\mathrm{ce}}$-locking sequence for $\learnerM$ on $W_e$.
\end{theorem}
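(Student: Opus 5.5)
We argue by contradiction, following the classical Blum--Blum construction. Fix $W_e\in\K$; since target families consist of nonempty sets, $W_e\neq\emptyset$. Fix an enumeration $a_0,a_1,a_2,\dots$ of $W_e$, repetitions allowed. Suppose that no finite string is an $E^{\ce}$-locking sequence for $\learnerM$ on $W_e$. We will build an enumeration $\mu$ of $W_e$ on which $\learnerM$ fails to converge to an $E^{\ce}$-correct index. This contradicts the assumption that $\learnerM$ $E^{\ce}$-learns $\K$.

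First record the key observation. Let $\sigma$ be any string with $\operatorname{range}(\sigma)\subseteq W_e$. Then $\sigma$ violates (b) or (c) of \Cref{def:locking_sequence}. We claim that in either case there is some $\tau$ with $\operatorname{range}(\tau)\subseteq W_e$ such that either $\learnerM(\sigma^\smallfrown\tau)\neq\learnerM(\sigma)$, or $\learnerM(\sigma^\smallfrown\tau)\mathrel{\centernot{E^{\ce}}}e$.
\begin{itemize}
\item If (c) fails, such a $\tau$ with $\learnerM(\sigma^\smallfrown\tau)\neq\learnerM(\sigma)$ exists directly.
\item If (c) holds but (b) fails, take $\tau$ empty: then $\learnerM(\sigma)\mathrel{\centernot{E^{\ce}}}e$.
\end{itemize}
Since the learner is an arbitrary function, no effectivity is needed when choosing $\tau$.

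Now build $\sigma_0\sqsubseteq\sigma_1\sqsubseteq\cdots$ by recursion. Put $\sigma_0=\langle a_0\rangle$. Given $\sigma_n$, choose $\tau_n$ for $\sigma_n$ as above, using a nonempty $\tau_n$ whenever (c) fails. Then set $\sigma_{n+1}=\sigma_n^\smallfrown\tau_n^\smallfrown a_{n+1}$. Let $\mu=\bigcup_n\sigma_n$. Its range is contained in $W_e$ and contains every $a_n$, so $\mu$ is an enumeration of $W_e$.

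Now suppose $\lim_s\learnerM(\mu[s])=i$ with $i\mathrel{E^{\ce}}e$. Pick $s_0$ such that $\learnerM(\mu[s])=i$ for all $s\geq s_0$, and take $n$ with $|\sigma_n|\geq s_0$.
\begin{itemize}
\item Condition (b) holds for $\sigma_n$, since $\learnerM(\sigma_n)=i\mathrel{E^{\ce}}e$.
\item So (c) must fail for $\sigma_n$. By construction, $\learnerM(\sigma_n^\smallfrown\tau_n)\neq\learnerM(\sigma_n)=i$.
\item But $\sigma_n^\smallfrown\tau_n\sqsubseteq\mu$ and has length at least $s_0$, so $\learnerM(\sigma_n^\smallfrown\tau_n)=i$. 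This is a contradiction.
\end{itemize}
Hence no such limit exists, which contradicts the assumption that $\learnerM$ learns $W_e$.

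The main subtlety is the case split between (b) and (c). Forcing a mind change alone would suffice classically. Here we only need that, once $\sigma_n$ is past convergence to a correct index, (b) holds automatically, so a failure of (c) yields a mind change beyond $s_0$. Everything else is routine bookkeeping, including interleaving the $a_n$ so that $\mu$ is onto $W_e$.
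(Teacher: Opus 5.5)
Your proof is correct and is essentially the paper's Blum--Blum-style argument: the paper likewise interleaves an enumeration of $W_e$ with extensions that force mind changes, and derives the contradiction from a stage past convergence. The one difference is that the paper forces a mind change only when the current hypothesis is already $E^{\mathrm{ce}}$-correct, so your case split on (b) versus (c) is unnecessary, though harmless.
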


\begin{proof}
Let $\learnerM$ be an $E^{\mathrm{ce}}$-learner for $\K$, let $W_e\in\K$, and suppose towards a contradiction that there is no $E^{\mathrm{ce}}$-locking sequence for $\learnerM$ on $W_e$.

Since $\learnerM$ $E^{\mathrm{ce}}$-learns $\K$, there exists some finite string $\sigma$ with $\operatorname{range}(\sigma)\subseteq W_e$ and $\learnerM(\sigma)\mathrel{E^{\mathrm{ce}}}e$. Indeed, take any enumeration $\nu$ of $W_e$; since $\learnerM$ learns $W_e$, there is some stage $s$ such that $\learnerM(\nu[s])\mathrel{E^{\mathrm{ce}}}e$.

Because we assume that there is no locking sequence, for every finite string $\sigma$ satisfying $\operatorname{range}(\sigma)\subseteq W_e$ and $\learnerM(\sigma)\mathrel{E^{\mathrm{ce}}}e$, there exists a finite string $\tau$ with $\operatorname{range}(\tau)\subseteq W_e$ such that $\learnerM(\sigma^\smallfrown\tau)\neq\learnerM(\sigma)$.

Fix an enumeration $\nu$ of $W_e$. We build an enumeration $\mu$ of $W_e$ on which $\learnerM$ fails to converge to an index $E^{\mathrm{ce}}$-equivalent to $e$.

Define finite strings $\sigma_0\sqsubseteq\sigma_1\sqsubseteq\cdots$ inductively. Let $\sigma_0$ be the empty string. Given $\sigma_s$, let $\rho_s:=\sigma_s^\smallfrown\langle\nu(s)\rangle$. If $\learnerM(\rho_s)\mathrel{E^{\mathrm{ce}}}e$, choose a finite string $\tau_s$ with $\operatorname{range}(\tau_s)\subseteq W_e$ such that $\learnerM(\rho_s^\smallfrown\tau_s)\neq\learnerM(\rho_s)$, and set $\sigma_{s+1}:=\rho_s^\smallfrown\tau_s$. Otherwise set $\sigma_{s+1}:=\rho_s$.

Let $\mu:=\bigcup_{s\in\omega}\sigma_s$. Since every value $\nu(s)$ appears in $\mu$, we have $\operatorname{range}(\mu)\supseteq\operatorname{range}(\nu)=W_e$, and by construction every entry of $\mu$ lies in $W_e$. Thus $\mu$ is an enumeration of $W_e$.

We claim that $\learnerM$ does not converge on $\mu$ to any hypothesis $k$ with $k\mathrel{E^{\mathrm{ce}}}e$. Suppose instead that there is such a $k$ and some stage $N$ such that for all $t\geq N$, $\learnerM(\mu[t])=k$. Choose $s$ so that $|\rho_s|\geq N$. Then $\rho_s$ is an initial segment of $\mu$, so $\learnerM(\rho_s)=k\mathrel{E^{\mathrm{ce}}}e$. By construction we therefore chose $\tau_s$ so that $\learnerM(\rho_s^\smallfrown\tau_s)\neq\learnerM(\rho_s)=k$. But $\rho_s^\smallfrown\tau_s=\sigma_{s+1}$ is also an initial segment of $\mu$, and its length is at least $N$, contradicting the assumed convergence to $k$.

Therefore $\learnerM$ fails to $E^{\mathrm{ce}}$-learn $W_e$, a contradiction.
\end{proof}

Locking sequences are learner-dependent. To obtain a learner-independent characterisation of $E^{\mathrm{ce}}$-learnability, we now pass to tell-tales. A tell-tale is a finite certificate attached to the target itself rather than to a particular learning strategy. The following theorem is the analogue of Angluin's classical tell-tale theorem \cite{angluin1980inductive}; the proof is the familiar one, adapted to the present setting.

\begin{theorem}
\label{theorem:telltale}
Let $\K$ be a family of c.e.\ sets and $E^{\mathrm{ce}}$ an equivalence relation. Then $\K$ is $E^{\mathrm{ce}}$-learnable if and only if for every $W_e\in\K$ there exists a \define{tell-tale}, that is, a finite set $D_{W_e}\subseteq W_e$, such that for every $W_{e'}\in\K$ with $e'\nEce{E}e$, if $D_{W_e}\subseteq W_{e'}$, then $W_{e'}\nsubseteq W_e$.
\end{theorem}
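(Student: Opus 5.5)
The forward direction will come from \Cref{thm:learnability_implies_lock_seq}, and the converse from an Angluin-style learner built out of the tell-tales.

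\emph{Forward direction.} Fix a learner $\learnerM$ that $E^{\ce}$-learns $\K$ and a target $W_e\in\K$.
\begin{enumerate}
\item By \Cref{thm:learnability_implies_lock_seq} there is an $E^{\ce}$-locking sequence $\sigma$ for $\learnerM$ on $W_e$. Put $D_{W_e}:=\operatorname{range}(\sigma)$, which is a finite subset of $W_e$.
\item Suppose towards a contradiction that some $W_{e'}\in\K$ satisfies $e'\nEce{E}e$, $D_{W_e}\subseteq W_{e'}$ and $W_{e'}\subseteq W_e$.
\item Since $W_{e'}$ is nonempty, fix an enumeration $\nu$ of it and consider the enumeration $\sigma^\smallfrown\nu$. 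It is an enumeration of $W_{e'}$ because $\operatorname{range}(\sigma)\subseteq W_{e'}$.
\item Every finite extension $\sigma^\smallfrown\tau$ along this enumeration has $\operatorname{range}(\tau)\subseteq W_{e'}\subseteq W_e$. By clause (c) of the locking sequence, $\learnerM$ outputs $\learnerM(\sigma)$ forever.
\item So $\learnerM$ converges to $\learnerM(\sigma)$, and $\learnerM(\sigma)\mathrel{E^{\ce}}e$ by clause (b). Since $e'\nEce{E}e$, we get $\learnerM(\sigma)\nEce{E}e'$. Hence $\learnerM$ fails on a member of $\K$, a contradiction.
\end{enumerate}

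\emph{Converse: the learner.} Since there are only countably many c.e.\ sets, list the members of $\K$ as $L_0,L_1,\dots$ (possibly with repetitions), with indices $e_0,e_1,\dots$ and tell-tales $D_0,D_1,\dots$. No effectivity is required, so the learner may depend on this list. On input $\sigma$, let $\learnerM(\sigma):=e_j$ for the least $j\leq|\sigma|$ such that
\[
D_j\subseteq\operatorname{range}(\sigma)\subseteq L_j.
\]
If no such $j$ exists, output $0$.

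\emph{Converse: correctness.} Let $\mu$ be an enumeration of a target $L=L_n$, and let $m$ be the least index with $L_m=L$. Let $c_s:=\operatorname{range}(\mu[s])$ denote the content seen by stage $s$.
\begin{enumerate}
\item For each $j\leq m$, the condition $D_j\subseteq c_s$ is monotone in $s$: it is eventually always true if $D_j\subseteq L$, and never true otherwise.
\item Likewise, $c_s\subseteq L_j$ is eventually always true if $L\subseteq L_j$, and eventually always false otherwise.
\item Hence each $j\leq m$ has an eventually stable status, qualifying or not. Index $m$ eventually qualifies, since $D_m\subseteq L=L_m$.
\item Therefore the least qualifying $j\leq m$ stabilises to some fixed $j^*$, and $\learnerM$ converges to $e_{j^*}$.
\item Suppose $L_{j^*}$ were not $E^{\ce}$-equivalent to $L$. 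Stable qualification means $D_{j^*}\subseteq L\subseteq L_{j^*}$. The tell-tale property of $L_{j^*}$, applied to $L\in\K$ with $L\nEce{E}L_{j^*}$, then gives $L\nsubseteq L_{j^*}$, a contradiction.
\end{enumerate}

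The main obstacle is convergence in the converse. Several $E^{\ce}$-equivalent but distinct sets $L_j$ with $j<m$ might compete for the output. The monotonicity in steps 1 and 2 is what guarantees that every candidate's status settles, and the tell-tale property is used only to rule out inequivalent candidates that settle as qualifying.
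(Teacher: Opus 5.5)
Your proposal is correct and takes essentially the same route as the paper. The forward direction uses the range of a locking sequence as the tell-tale, and the converse uses the same Angluin-style learner, which outputs the least candidate $j$ with $D_j\subseteq\operatorname{range}(\sigma)\subseteq L_j$. The differences are cosmetic: you index candidates by a list of $\K$ and first show that the status of the finitely many candidates $j\le m$ stabilises before applying the tell-tale property, whereas the paper fixes the limit $i_0$ in advance and uses its minimality to eliminate the smaller $E^{\ce}$-equivalent candidates.
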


\begin{proof}
For the left-to-right direction, suppose that $\learnerM$ $E^{\mathrm{ce}}$-learns $\K$. Fix $W_e\in\K$, and let $\sigma$ be an $E^{\mathrm{ce}}$-locking sequence for $\learnerM$ on $W_e$. Set $D_{W_e}:=\operatorname{range}(\sigma)$. By \Cref{def:locking_sequence}, we have $D_{W_e}\subseteq W_e$.

Suppose towards a contradiction that there exists $W_{e'}\in\K$ such that $e'\nEce{E}e$, $D_{W_e}\subseteq W_{e'}$, and $W_{e'}\subseteq W_e$. Let $\mu$ be an enumeration of $W_{e'}$ such that $\mu[|\sigma|]=\sigma$. Since $\operatorname{range}(\mu[n])\subseteq W_e$ for every $n$, the defining property of a locking sequence implies that $\learnerM(\mu[n])=\learnerM(\sigma)$ for all $n\geq|\sigma|$; in particular, $\learnerM(\mu[n])\mathrel{E^{\mathrm{ce}}}e$ for all sufficiently large $n$. But $\mu$ is an enumeration of $W_{e'}$, and since $\learnerM$ $E^{\mathrm{ce}}$-learns $\K$, it must converge on $\mu$ to some hypothesis $k$ with $k\mathrel{E^{\mathrm{ce}}}e'$. By eventual correctness and transitivity of $E^{\mathrm{ce}}$, this yields $e\mathrel{E^{\mathrm{ce}}}e'$, a contradiction. Hence $D_{W_e}$ is a tell-tale for $W_e$.

For the right-to-left direction, assume that for every $W_e\in\K$ there is a finite set $D_{W_e}\subseteq W_e$ such that for every $W_{e'}\in\K$, if $e'\nEce{E}e$ and $D_{W_e}\subseteq W_{e'}$, then $W_{e'}\nsubseteq W_e$.

Define a learner $\learnerM$ as follows. If there is an index $e_0$ such that
\[
W_{e_0}\in\K
\quad\text{and}\quad
D_{W_{e_0}}\subseteq\operatorname{range}(\sigma)\subseteq W_{e_0},
\]
let $\learnerM(\sigma)$ be the least such $e_0$; otherwise set $\learnerM(\sigma):=0$.
We show that $\learnerM$ $E^{\mathrm{ce}}$-learns $\K$. Fix $W_i\in\K$, and let $\mu$ be an enumeration of $W_i$. Let $i_0$ be the least index such that $W_{i_0}\in\K$, $i_0\mathrel{E^{\mathrm{ce}}}i$, and $D_{W_{i_0}}\subseteq W_i\subseteq W_{i_0}$. Such an index exists, since $i$ itself has these properties.

We claim that $\learnerM$ converges to $i_0$ along $\mu$. First, since $D_{W_{i_0}}$ is finite and contained in $W_i$, there is some $n_0\in\omega$ such that
\[
D_{W_{i_0}}\subseteq\operatorname{range}(\mu[n_0])\subseteq W_i\subseteq W_{i_0}.
\]
Hence for every $n\geq n_0$, the index $i_0$ is among the candidates considered by $\learnerM(\mu[n])$, so $\learnerM(\mu[n])\leq i_0$.

Now fix $j<i_0$. We show that there is some $m_j$ such that $\learnerM(\mu[n])\neq j$ for all $n\geq m_j$.

If $W_j\notin\K$, then $j$ is never a candidate, and we may take $m_j:=0$. Suppose therefore that $W_j\in\K$. If there is no stage $n$ such that $D_{W_j}\subseteq\operatorname{range}(\mu[n])\subseteq W_j$, then again $j$ is never output and we may take $m_j:=0$. Otherwise, for some stage $n$ we have $D_{W_j}\subseteq\operatorname{range}(\mu[n])\subseteq W_j$. Since $\operatorname{range}(\mu[n])\subseteq W_i$, we get $D_{W_j}\subseteq W_i$.

If $j\nEce{E}i$, then the tell-tale property for $W_j$ implies that $W_i\nsubseteq W_j$. Hence there is some $x\in W_i\setminus W_j$. If instead $j\mathrel{E^{\mathrm{ce}}}i$, then $j$ also satisfies $j\mathrel{E^{\mathrm{ce}}}i$ and $D_{W_j}\subseteq W_i\subseteq W_j$. By minimality of $i_0$, this is impossible unless $W_i\nsubseteq W_j$. So again there is some $x\in W_i\setminus W_j$.

Choose $m_j$ such that $x\in\operatorname{range}(\mu[m_j])$. For every $n\geq m_j$, we then have $x\in\operatorname{range}(\mu[n])\setminus W_j$. Hence $j$ is no longer a candidate for $\learnerM(\mu[n])$, and therefore $\learnerM(\mu[n])\neq j$ for all $n\geq m_j$.

Let
\[
m:=\max\bigl(\{n_0\}\cup\{m_j:j<i_0\}\bigr).
\]
For every $n\geq m$, the index $i_0$ is a candidate and no $j<i_0$ is a candidate. Hence $\learnerM(\mu[n])=i_0$ for all $n\geq m$. Since $i_0\mathrel{E^{\mathrm{ce}}}i$, this is a correct $E^{\mathrm{ce}}$-hypothesis.
\end{proof}

For later structural arguments it is convenient to dualise the tell-tale viewpoint. A tell-tale is a finite fragment that blocks wrong proper subtargets. An \emph{absorbing set} is the opposite phenomenon: no matter how much of the target one has seen, one can still extend that finite information to a wrong proper subtarget. Absorbing sets will therefore serve as our main obstruction to $E^{\mathrm{ce}}$-learnability.

\begin{definition}
\label{def:absorbing}
Given a family $\K$ of c.e.\ sets and an equivalence relation $E^{\mathrm{ce}}$, we say that $W_e\in\K$ is \define{$(\K,E^{\mathrm{ce}})$-absorbing} if for every finite set $F\subseteq W_e$ there exists $W_i\in\K$ such that $i\nEce{E}e$ and $F\subseteq W_i\subsetneq W_e$. We call such a set $W_i$ a \define{$(\K,E^{\mathrm{ce}})$-absorbing witness} for $F$ on $W_e$.
\end{definition}

When $\K$ is the family of all c.e.\ sets, we simply say that $W_e$ is $E^{\mathrm{ce}}$-absorbing.
To illustrate \Cref{def:absorbing}, note that $\omega$ is $(\K,=^{\mathrm{ce}})$-absorbing for $\K:=\{\omega\restriction n:n\geq 1\}\cup\{\omega\}$.

\begin{proposition}
\label{prop:eqce_learnable_iff_no_absorbing}
Let $E^{\mathrm{ce}}$ be an equivalence relation. A family of c.e.\ sets $\K$ is $E^{\mathrm{ce}}$-learnable if and only if $\K$ has no $(\K,E^{\mathrm{ce}})$-absorbing set.
\end{proposition}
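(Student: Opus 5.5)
The plan is to derive this directly from the tell-tale theorem (\Cref{theorem:telltale}), observing that "absorbing" is precisely the negation of "has a tell-tale". So the statement reduces to a purely logical unfolding of the two definitions, and we will show that $W_e\in\K$ is $(\K,E^{\mathrm{ce}})$-absorbing if and only if $W_e$ has no tell-tale in the sense of \Cref{theorem:telltale}.

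First, spell out the negation of the tell-tale condition for a fixed $W_e\in\K$. The condition says there exists a finite $D\subseteq W_e$ such that for all $W_{e'}\in\K$ with $e'\nEce{E}e$ and $D\subseteq W_{e'}$, we have $W_{e'}\nsubseteq W_e$. Its negation is the following: for every finite $F\subseteq W_e$ there is $W_{e'}\in\K$ with $e'\nEce{E}e$, $F\subseteq W_{e'}$ and $W_{e'}\subseteq W_e$. The only discrepancy with \Cref{def:absorbing} is that absorbing demands $W_{e'}\subsetneq W_e$ rather than $W_{e'}\subseteq W_e$. I would close this gap by noting that if $W_{e'}=W_e$ then $e'=^{\mathrm{ce}}e$, hence $e'\mathrel{E^{\mathrm{ce}}}e$ because $E^{\mathrm{ce}}$ extends $=^{\mathrm{ce}}$; so any witness with $e'\nEce{E}e$ is automatically a proper subset. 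Thus the two conditions coincide for each $W_e\in\K$.

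Finally, combine the steps. $\K$ is $E^{\mathrm{ce}}$-learnable iff every $W_e\in\K$ has a tell-tale (by \Cref{theorem:telltale}), iff no $W_e\in\K$ is $(\K,E^{\mathrm{ce}})$-absorbing (by the equivalence above). One minor point to check is that the tell-tale definition and the absorbing definition range over sets $W_{e'}\in\K$ in the same way, with the index $e'$ arbitrary; both depend only on the set, since $E^{\mathrm{ce}}$ respects $=^{\mathrm{ce}}$. There is no real obstacle; the only step requiring care is the strict-versus-non-strict inclusion, which is handled by the fact that $E^{\mathrm{ce}}\supseteq{=^{\mathrm{ce}}}$.
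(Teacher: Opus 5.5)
Your proposal is correct and follows the paper's proof: both reduce the statement to \Cref{theorem:telltale} and observe that being $(\K,E^{\mathrm{ce}})$-absorbing is exactly the negation of having a tell-tale. Your remark that a witness with $e'\nEce{E}e$ cannot equal $W_e$, because $E^{\mathrm{ce}}\supseteq{=^{\mathrm{ce}}}$, is exactly what justifies the paper's ``Equivalently'' step, which passes between $W_i\subsetneq W_e$ and $W_i\subseteq W_e$ without comment.
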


\begin{proof}
By \Cref{theorem:telltale}, $\K$ is $E^{\mathrm{ce}}$-learnable if and only if every member of $\K$ has an $E^{\mathrm{ce}}$-tell-tale.

First suppose that every set in $\K$ has an $E^{\mathrm{ce}}$-tell-tale. Fix $W_e\in\K$, and let $D_{W_e}\subseteq W_e$ be an $E^{\mathrm{ce}}$-tell-tale for $W_e$. Then there is no $W_i\in\K$ such that $i\nEce{E}e$ and $D_{W_e}\subseteq W_i\subsetneq W_e$, since this would contradict the definition of tell-tale. Hence $W_e$ is not $(\K,E^{\mathrm{ce}})$-absorbing. As $W_e$ was arbitrary, $\K$ has no $(\K,E^{\mathrm{ce}})$-absorbing set.

Conversely, suppose that $\K$ has no $(\K,E^{\mathrm{ce}})$-absorbing set. Fix $W_e\in\K$. Since $W_e$ is not $(\K,E^{\mathrm{ce}})$-absorbing, there exists a finite set $F\subseteq W_e$ such that there is no $W_i\in\K$ with $i\nEce{E}e$ and $F\subseteq W_i\subsetneq W_e$. Equivalently, for every $W_i\in\K$ with $i\nEce{E}e$, if $F\subseteq W_i$, then $W_i\nsubseteq W_e$. Thus $F$ is an $E^{\mathrm{ce}}$-tell-tale for $W_e$.
\end{proof}

Note that for $W_e$ to be $(\K,E^{\mathrm{ce}})$-absorbing it is enough to have absorbing witnesses for all initial segments $(W_e\restriction n)_{n\in\omega}$ (indeed, for any exhaustive sequence of finite subsets of $W_e$). This is because for every finite $F\subseteq W_e$ there is some $n$ such that $F\subseteq W_e\restriction n$, and any absorbing witness for $W_e\restriction n$ is also an absorbing witness for $F$. Accordingly, for a given set $W_e$ we call the sequence $(W_{u_n})_{n\in\omega}$, where $W_{u_n}$ is an absorbing witness for $W_e\restriction n$, a \define{sequence of canonical $(\K,E^{\mathrm{ce}})$-absorbing witnesses} for $W_e$. Such a sequence is of course far from unique.

The usefulness of absorbing sets is that they package nonlearnability into a single sequence that can later be manipulated combinatorially.

\begin{proposition}
\label{obs:canonical_witnesses_basic_facts}
Let $\K$ be a family of c.e.\ sets, and let $(W_{u_n})_{n\in\omega}$ be a sequence of canonical $(\K,E^{\mathrm{ce}})$-absorbing witnesses for some $W_e\in\K$. Then:
\begin{enumerate}
    \item[(a)] $(W_{u_n})_{n\in\omega}$ contains infinitely many pairwise distinct sets (that is, infinitely many pairwise $=^{\mathrm{ce}}$-inequivalent sets). In particular, ordered by inclusion, it contains either an infinite chain or an infinite antichain.
    \item[(b)] Every infinite subsequence of $(W_{u_n})_{n\in\omega}$ is itself a sequence of canonical $(\K,E^{\mathrm{ce}})$-absorbing witnesses for $W_e$.
\end{enumerate}
\end{proposition}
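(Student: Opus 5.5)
For (a), I would argue by contradiction. Suppose the sequence $(W_{u_n})_{n\in\omega}$ contains only finitely many distinct sets, say $A_1,\dots,A_k$. Each of these is an absorbing witness for some initial segment, so each $A_j\subsetneq W_e$. Pick, for each $j$, an element $x_j\in W_e\setminus A_j$, and let $n$ be large enough that $\{x_1,\dots,x_k\}\subseteq W_e\restriction n$. Then $W_{u_n}\supseteq W_e\restriction n$ contains every $x_j$. But $W_{u_n}$ equals some $A_j$, which omits $x_j$. This is a contradiction.

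Hence infinitely many pairwise $=^{\mathrm{ce}}$-inequivalent sets occur. Choose infinitely many indices $n_0<n_1<\cdots$ with pairwise distinct sets $W_{u_{n_k}}$. The second clause of (a) then follows from infinite Ramsey's theorem. Colour each pair $\{k,l\}$ according to whether the two sets are $\subseteq$-comparable. An infinite homogeneous set yields either an infinite chain or an infinite antichain under inclusion. Comparable distinct sets are strictly comparable, so a chain here is a genuine chain of distinct sets.

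For (b), take an infinite subsequence $(W_{u_{n_k}})_{k\in\omega}$ with $n_0<n_1<\cdots$, so $n_k\geq k$. Then $W_{u_{n_k}}$ is an absorbing witness for $W_e\restriction n_k$. Since $W_e\restriction k\subseteq W_e\restriction n_k$, it is also an absorbing witness for $W_e\restriction k$. The other conditions ($W_{u_{n_k}}\in\K$, $u_{n_k}\nEce{E}e$, $W_{u_{n_k}}\subsetneq W_e$) do not depend on the segment, and the observation preceding the proposition shows that witnesses for all initial segments suffice. I expect no real obstacle here. The only point needing care is the first step of (a), where one must use that each witness is a \emph{proper} subset of $W_e$ to find the missing elements $x_j$.
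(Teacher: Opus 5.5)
Your proposal is correct and follows the paper's proof essentially step for step. For (a), the paper uses the same contradiction, choosing $x_j\in W_e\setminus A_j$ and covering them by some $W_e\restriction n$. For (b), it uses the same observation $W_e\restriction k\subseteq W_e\restriction n_k$. The only cosmetic difference is that you derive the chain/antichain dichotomy explicitly from Ramsey's theorem, whereas the paper simply cites the infinite chain--antichain principle.
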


\begin{proof}
To prove (a), suppose towards a contradiction that the sequence contains only finitely many distinct sets, say $A_0,\dots,A_{m-1}$. Since each $A_j$ is an absorbing witness for some finite subset of $W_e$, we have $A_j\subsetneq W_e$ for each $j<m$. Choose $x_j\in W_e\setminus A_j$ for each $j<m$, and let $F:=\{x_0,\dots,x_{m-1}\}\subseteq W_e$. Since $F$ is finite, there exists $n$ such that $F\subseteq W_e\restriction n$. Now $W_{u_n}$ is an absorbing witness for $W_e\restriction n$, so $W_e\restriction n\subseteq W_{u_n}\subsetneq W_e$, and in particular $F\subseteq W_{u_n}$. But $W_{u_n}$ must equal $A_j$ for some $j<m$, contradicting $x_j\notin A_j$.

The final sentence of (a) now follows from the standard infinite chain-antichain principle for partial orders.

For (b), let $(W_{u_{n_k}})_{k\in\omega}$ be an infinite subsequence of $(W_{u_n})_{n\in\omega}$, where $n_0<n_1<n_2<\cdots$. Fix $k\in\omega$. Since $(W_{u_n})_{n\in\omega}$ is a sequence of canonical absorbing witnesses for $W_e$, the set $W_{u_{n_k}}$ is an absorbing witness for $W_e\restriction n_k$. Hence
\[
W_e\restriction n_k\subseteq W_{u_{n_k}}\subsetneq W_e\quad\text{and}\quad u_{n_k}\mathrel{\cancel{E^{\mathrm{ce}}}}e.
\]
Since $k\leq n_k$, we also have $W_e\restriction k\subseteq W_e\restriction n_k\subseteq W_{u_{n_k}}$. Therefore $W_{u_{n_k}}$ is already an absorbing witness for $W_e\restriction k$. Since this holds for every $k$, the subsequence is again a canonical witness sequence for $W_e$.
\end{proof}

\begin{theorem}
\label{more_learning_power_class_collapsed}
Let $E^{\mathrm{ce}},F^{\mathrm{ce}}$ be equivalence relations such that $E^{\mathrm{ce}}<_{\mathrm{Learn}}F^{\mathrm{ce}}$. Then there is a family $\K$ of c.e.\ sets  such that $\K$ is not $E^{\mathrm{ce}}$-learnable and all indices of sets in $\K$ belong to a single $F^{\mathrm{ce}}$-equivalence class.
\end{theorem}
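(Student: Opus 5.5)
Since $E^{\ce}<_{\Learn}F^{\ce}$, we have $F^{\ce}\not\leq_{\Learn}E^{\ce}$, so there is a family $\mathfrak{L}$ that is $F^{\ce}$-learnable but not $E^{\ce}$-learnable. The plan is to cut $\mathfrak{L}$ down to a single $F^{\ce}$-class that still fails to be $E^{\ce}$-learnable, using the absorbing-set characterisation of \Cref{prop:eqce_learnable_iff_no_absorbing} twice.

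First, since $\mathfrak{L}$ is not $E^{\ce}$-learnable, \Cref{prop:eqce_learnable_iff_no_absorbing} gives some $W_e\in\mathfrak{L}$ that is $(\mathfrak{L},E^{\ce})$-absorbing. Fix a sequence of canonical absorbing witnesses $(W_{u_n})_{n\in\omega}$ for $W_e$. Thus $W_e\restriction n\subseteq W_{u_n}\subsetneq W_e$, $u_n\nEce{E}e$, and each $W_{u_n}\in\mathfrak{L}$.

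Second, since $\mathfrak{L}$ is $F^{\ce}$-learnable, the same proposition says $W_e$ is not $(\mathfrak{L},F^{\ce})$-absorbing. Hence there is a finite $G\subseteq W_e$ such that no $W_i\in\mathfrak{L}$ satisfies both $i\nEce{F}e$ and $G\subseteq W_i\subsetneq W_e$. Choose $n_0$ with $G\subseteq W_e\restriction n_0$. For every $n\geq n_0$ we have $G\subseteq W_{u_n}\subsetneq W_e$ with $W_{u_n}\in\mathfrak{L}$, so necessarily $u_n\mathrel{F^{\ce}}e$.

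Now set $\K:=\{W_e\}\cup\{W_{u_n}:n\geq n_0\}$. Every index of a member of $\K$ is $F^{\ce}$-equivalent to $e$: indices of the same set are $=^{\ce}$-equivalent, and $F^{\ce}$ extends $=^{\ce}$. So all of $\K$ lies in one $F^{\ce}$-class. By \Cref{obs:canonical_witnesses_basic_facts}(b), the tail $(W_{u_n})_{n\geq n_0}$ is again a canonical sequence of absorbing witnesses for $W_e$, and its members lie in $\K$. Therefore $W_e$ is $(\K,E^{\ce})$-absorbing. One can also see this directly: for finite $F'\subseteq W_e$, pick $n\geq n_0$ with $F'\subseteq W_e\restriction n$. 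By \Cref{prop:eqce_learnable_iff_no_absorbing}, $\K$ is not $E^{\ce}$-learnable.

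The only delicate step is the second one, which extracts from $F^{\ce}$-learnability the finite set $G$ that forces the tail of the witnesses into the $F^{\ce}$-class of $e$. Once that is in place, the rest is bookkeeping. Note that the proof needs only $F^{\ce}\not\leq_{\Learn}E^{\ce}$, not the full strict inequality.
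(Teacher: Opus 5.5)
Your proof is correct and is essentially the paper's argument: take an $F^{\mathrm{ce}}$-learnable but non-$E^{\mathrm{ce}}$-learnable family, fix an $E^{\mathrm{ce}}$-absorbing member $W_e$ with a canonical witness sequence, use $F^{\mathrm{ce}}$-learnability to show that all but finitely many witnesses are $F^{\mathrm{ce}}$-equivalent to $e$, and keep only $W_e$ and those witnesses. The only difference is cosmetic: you take the blocking finite set $G$ directly and keep a tail of the witness sequence, whereas the paper argues by contradiction that no infinite subsequence of witnesses can consist entirely of $F^{\mathrm{ce}}$-inequivalent sets.
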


\begin{proof}
Since $E^{\mathrm{ce}}<_{\mathrm{Learn}}F^{\mathrm{ce}}$, there is a family $\K$ that is $F^{\mathrm{ce}}$-learnable but not $E^{\mathrm{ce}}$-learnable. By \Cref{prop:eqce_learnable_iff_no_absorbing} there is a $(\K,E^{\mathrm{ce}})$-absorbing set $W_e\in\K$ together with a sequence $\A=(W_{u_n})_{n\in\omega}$ of canonical absorbing witnesses for $W_e$.

Suppose towards a contradiction that some infinite subsequence $\A'$ of $\A$ consists entirely of sets whose indices are not $F^{\mathrm{ce}}$-equivalent to $e$. Then $\A'$ witnesses that $W_e$ is $(\K,F^{\mathrm{ce}})$-absorbing. By \Cref{prop:eqce_learnable_iff_no_absorbing}, $\K$ would not be $F^{\mathrm{ce}}$-learnable, contradicting our choice of $\K$.

Therefore there is an infinite subsequence $\A''=(W_{u_i})_{i\in I}$ of $\A$ such that $u_i\mathrel{F^{\mathrm{ce}}}e$ for all $i\in I$. By \Cref{obs:canonical_witnesses_basic_facts}, $\A''$ is still a sequence of canonical absorbing witnesses for $W_e$. Let
\[
\K':=\{W_e\}\cup\{W_{u_i}:i\in I\}.
\]
The subsequence $\A''$ witnesses that $W_e$ is $(\K',E^{\mathrm{ce}})$-absorbing. Hence $\K'$ is not $E^{\mathrm{ce}}$-learnable, while all its members lie in a single $F^{\mathrm{ce}}$-class.
\end{proof}

This theorem will serve repeatedly as a bridge from abstract inequalities in $\TLH$ to concrete witness families.

\subsection{Universality and lowness}

We now turn from the learnability of a fixed family to the extremal points of the hierarchy. At one end are the \emph{low} relations, which do not enlarge learning power beyond exact learning; at the other end are the \emph{universal} relations, which make every family of nonempty c.e.\ sets learnable.

\begin{definition}
\label{def:low_universal_weak}
Let $E^{\mathrm{ce}}$ be an equivalence relation.
\begin{itemize}
    \item We say that $E^{\mathrm{ce}}$ is \define{low} if $E^{\mathrm{ce}}\equiv_{\mathrm{Learn}}=^{\mathrm{ce}}$.
    \item We say that $E^{\mathrm{ce}}$ is \define{universal} if every family of nonempty c.e.\ sets is $E^{\mathrm{ce}}$-learnable; equivalently, if $\{W_e:e\in\omega,\ W_e\neq\emptyset\}$ is $E^{\mathrm{ce}}$-learnable.
\end{itemize}
\end{definition}

\begin{definition}
An equivalence relation $E^{\mathrm{ce}}$ has the \define{locking property} if for every nonempty c.e.\ set $W_e$ and every sequence $(b_i)_{i\in\omega}$ of indices such that
\begin{itemize}
    \item $W_{b_i}$ is finite for every $i\in\omega$,
    \item $W_{b_i}\subseteq W_{b_{i+1}}$ for every $i\in\omega$, and
    \item $\bigcup_{i\in\omega}W_{b_i}=W_e$,
\end{itemize}
there exists a \define{locking witness}, namely some $k\in\omega$ such that $(\forall i\geq k)\,(b_i\mathrel{E^{\mathrm{ce}}}e)$.
\end{definition}

\begin{theorem}
\label{BC_characterized_by_LP}
An equivalence relation $E^{\mathrm{ce}}$ is universal if and only if $E^{\mathrm{ce}}$ has the locking property.
\end{theorem}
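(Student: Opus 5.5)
Both directions go through \Cref{prop:eqce_learnable_iff_no_absorbing}, applied to the family $\K_{\neq\emptyset}:=\{W_e: W_e\neq\emptyset\}$. By \Cref{def:low_universal_weak}, $E^{\ce}$ is universal exactly when $\K_{\neq\emptyset}$ is $E^{\ce}$-learnable. By the proposition, this holds exactly when no nonempty $W_e$ is $(\K_{\neq\emptyset},E^{\ce})$-absorbing. So it suffices to show: some nonempty $W_e$ is $(\K_{\neq\emptyset},E^{\ce})$-absorbing if and only if the locking property fails.

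\emph{Failure of the locking property gives absorption.} Suppose the locking property fails, witnessed by a nonempty $W_e$ and a sequence $(b_i)$ of indices of finite sets increasing to $W_e$ with $b_i\nEce{E}e$ for infinitely many $i$.

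First, $W_e$ must be infinite. If $W_e$ were finite, then $W_{b_i}=W_e$ from some point on. Since $E^{\ce}$ extends $=^{\ce}$, this gives $b_i\mathrel{E^{\ce}}e$ eventually, contradicting the choice of $(b_i)$.

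Now fix a finite $F\subseteq W_e$. Since the $W_{b_i}$ increase to $W_e$, we have $F\subseteq W_{b_i}$ for all large $i$. Choose such an $i$ with $b_i\nEce{E}e$. Then $W_{b_i}$ is finite while $W_e$ is infinite, so $W_{b_i}\subsetneq W_e$. If $F\neq\emptyset$, then $W_{b_i}\neq\emptyset$, so $W_{b_i}\in\K_{\neq\emptyset}$ and it is an absorbing witness for $F$. If $F=\emptyset$, any witness for a nonempty finite $F'\subseteq W_e$ also works for $F$. Hence $W_e$ is absorbing, $\K_{\neq\emptyset}$ is not learnable, and $E^{\ce}$ is not universal.

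\emph{Absorption gives failure of the locking property.} Suppose some nonempty $W_e$ is $(\K_{\neq\emptyset},E^{\ce})$-absorbing. Let $(W_{u_n})_{n\in\omega}$ be a canonical witness sequence. Each $W_{u_n}$ satisfies $W_e\restriction n\subseteq W_{u_n}\subsetneq W_e$ and $u_n\nEce{E}e$.

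The witnesses may be infinite, while the locking property only concerns finite sets, so we must replace them by finite ones. The plan is to build the sequence $(b_i)$ in blocks: first list finite pieces approximating $W_{u_0}$, then finite pieces approximating $W_{u_1}$, and so on, keeping everything increasing. Increase is the main obstacle, because the $W_{u_n}$ need not be nested, so jumping from pieces of $W_{u_n}$ to pieces of $W_{u_{n+1}}$ could shrink the set.

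The key step is to apply the locking property to each witness itself, and this resolves the obstacle.
\begin{enumerate}
\item Assume the locking property holds, towards a contradiction.
\item For a fixed $n$, consider the finite sets $(W_e\restriction m)\cup(W_{u_n}\restriction k)$ for $k\in\omega$, where $m$ is fixed. Their union is $W_{u_n}\cup(W_e\restriction m)$.
\item If $m\leq n$, this union is just $W_{u_n}$. So the locking property, applied to $W_{u_n}$, gives a $k$ such that the finite set $G_n:=(W_e\restriction n)\cup(W_{u_n}\restriction k)$ has an index $E^{\ce}$-equivalent to $u_n$, hence $E^{\ce}$-inequivalent to $e$. Enlarging $k$ keeps this true.
\item We cannot simply concatenate the $G_n$, since they need not be nested. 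Instead, pass to a single increasing chain that stays inside $W_e$. Let $c_n$ be an index for the finite set $W_e\restriction n$; these sets increase to $W_e$.
\item The locking property applied to $(c_n)$ gives $c_n\mathrel{E^{\ce}}e$ for all large $n$.
\item Now take the sequence whose terms are $W_e\restriction n$, then $G_n$, then $W_e\restriction(\max G_n+1)$, then the next $G$-set, and so on. We need $G_n\subseteq W_e\restriction(\max G_n+1)$, which holds because $G_n\subseteq W_e$.
\item This sequence is increasing and its union is $W_e$. It contains infinitely many terms $G_n$ that are $E^{\ce}$-inequivalent to $e$, so it has no locking witness, contradicting the locking property.
\end{enumerate}
Hence the locking property fails. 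Combining both directions proves \Cref{BC_characterized_by_LP}.
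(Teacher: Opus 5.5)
Your proof is correct. The direction in which failure of the locking property gives an absorbing set, and hence non-universality, is the paper's argument; the only difference is that you use the whole family $\{W_e:W_e\neq\emptyset\}$ instead of $\{W_{b_i}:W_{b_i}\neq\emptyset\}\cup\{W_e\}$.

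For the converse you take a genuinely different route. The paper does not use \Cref{prop:eqce_learnable_iff_no_absorbing} there. It writes down an explicit learner: on input $\sigma$ it keeps its previous conjecture as long as the canonical index $\iota(\operatorname{range}(\sigma))$ of the data seen so far is $E^{\ce}$-equivalent to it, and otherwise switches to $\iota(\operatorname{range}(\sigma))$. Convergence then follows from a single application of the locking property to the sets $\operatorname{range}(\mu[s])$ of the given enumeration.

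You instead argue contrapositively inside the absorbing-set framework. You apply the locking property to each witness $W_{u_n}$, using approximations padded by $W_e\restriction n$. This turns the possibly infinite witnesses into finite sets $G_n$ with $W_e\restriction n\subseteq G_n\subseteq W_e$ and $G_n\nEce{E}e$, which you then chain. This shows directly that the locking property rules out absorbing sets for the family of all nonempty c.e.\ sets, and it keeps the whole theorem in the language of \Cref{section:characterizations}. The cost is that you use the locking property on infinitely many sets and inherit the highly non-constructive learner of \Cref{theorem:telltale}. The paper's learner is elementary, outputs only canonical indices of finite sets, and compares only such indices under $E^{\ce}$. Its effectivity is therefore visibly governed by $E^{\ce}$ restricted to finite sets, which is the kind of question taken up in \Cref{section:effective_learners}.

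Step 6 needs a small repair. As written, ``the next $G$-set'' after $W_e\restriction(\max G_n+1)$ need not have a larger index (for instance if $\max G_n<n$), so you have not guaranteed that the union is $W_e$. Choose $n_{j+1}$ larger than both $n_j$ and every element of $G_{n_j}$. Then $G_{n_j}\subseteq W_e\restriction n_{j+1}\subseteq G_{n_{j+1}}$, so $(G_{n_j})_{j}$ is already an increasing sequence of finite sets with union $W_e$ and no locking witness. The interleaved sets $W_e\restriction n$ and step 5 are unnecessary.
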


\begin{proof}
For the left-to-right direction, we prove the contrapositive. Suppose that $E^{\mathrm{ce}}$ does not have the locking property. Then there exist a nonempty c.e.\ set $W_e$ and a sequence $(b_i)_{i\in\omega}$ such that:
\begin{itemize}
    \item $W_{b_i}$ is finite for every $i\in\omega$,
    \item $W_{b_i}\subseteq W_{b_{i+1}}$ for every $i\in\omega$,
    \item $\bigcup_{i\in\omega}W_{b_i}=W_e$, and
    \item for every $k\in\omega$ there exists $i\geq k$ such that $b_i\nEce{E}e$.
\end{itemize}
Set $\K:=\{W_{b_i}:i\in\omega,\ W_{b_i}\neq\emptyset\}\cup\{W_e\}$. First note that $W_e$ must be infinite: if $W_e$ were finite, then for all sufficiently large $i$ we would have $W_{b_i}=W_e$, and hence $b_i=^{\mathrm{ce}}e$, contradicting the last bullet. In particular, the approximations are eventually nonempty.

We claim that $W_e$ is $(\K,E^{\mathrm{ce}})$-absorbing. Let $F\subseteq W_e$ be finite. Since the sequence $(W_{b_i})_{i\in\omega}$ is increasing and exhausts $W_e$, there exists $k$ such that $F\subseteq W_{b_k}$. By the failure of the locking property, choose $i\geq k$ large enough that $W_{b_i}\neq\emptyset$ and $b_i\nEce{E}e$. Then
\[
F\subseteq W_{b_k}\subseteq W_{b_i}\subseteq W_e.
\]
Since $W_{b_i}$ is finite and $W_e$ is infinite, we have $W_{b_i}\subsetneq W_e$. Thus $F\subseteq W_{b_i}\subsetneq W_e$ and $b_i\nEce{E}e$, proving that $W_e$ is $(\K,E^{\mathrm{ce}})$-absorbing.

By \Cref{prop:eqce_learnable_iff_no_absorbing}, $\K$ is not $E^{\mathrm{ce}}$-learnable. Hence $E^{\mathrm{ce}}$ is not universal.

For the right-to-left direction, assume that $E^{\mathrm{ce}}$ has the locking property. We show that every family of nonempty c.e.\ sets is $E^{\mathrm{ce}}$-learnable. Let $\K$ be any family of nonempty c.e.\ sets.

For each finite set $F\subseteq\omega$, fix an index $\iota(F)$ such that $W_{\iota(F)}=F$. Define a learner $\learnerM$ inductively on the length of its input by
\[
\learnerM(\varepsilon):=\iota(\emptyset),
\]
and, given a finite string $\sigma$ and an element $x$,
\[
\learnerM(\sigma^\smallfrown\langle x\rangle)=
\begin{cases}
\learnerM(\sigma),&\text{if }\iota(\operatorname{range}(\sigma^\smallfrown\langle x\rangle))\mathrel{E^{\mathrm{ce}}}\learnerM(\sigma),\\
\iota(\operatorname{range}(\sigma^\smallfrown\langle x\rangle)),&\text{otherwise.}
\end{cases}
\]
Fix an enumeration $\mu$ of some $W_e\in\K$. For each stage $s$, let $B_s:=\operatorname{range}(\mu[s])$ and $b_s:=\iota(B_s)$. Then each $B_s$ is finite, the sequence $(B_s)_{s\in\omega}$ is increasing, and $\bigcup_s B_s=W_e$. By the locking property, there exists $k\in\omega$ such that $b_s\mathrel{E^{\mathrm{ce}}}e$ for all $s\geq k$.

We now verify convergence. Suppose first that $\learnerM(\mu[k])\mathrel{E^{\mathrm{ce}}}e$. An induction on $s\geq k$ shows that
\[
\learnerM(\mu[s])=\learnerM(\mu[k]).
\]
Indeed, if the equality holds at stage $s-1$, then both $b_s$ and $\learnerM(\mu[s-1])$ are $E^{\mathrm{ce}}$-equivalent to $e$, so the definition of $\learnerM$ leaves the hypothesis unchanged at stage $s$.

Suppose instead that $\learnerM(\mu[k])\nEce{E}e$. Since $b_{k+1}\mathrel{E^{\mathrm{ce}}}e$, we have $b_{k+1}\nEce{E}\learnerM(\mu[k])$, and hence
\[
\learnerM(\mu[k+1])=b_{k+1}\mathrel{E^{\mathrm{ce}}}e.
\]
The same induction, now starting at stage $k+1$, shows that the output remains constant thereafter.

Thus $\learnerM$ converges to a hypothesis in the $E^{\mathrm{ce}}$-class of $e$. Since $\mu$ and $W_e$ were arbitrary, $E^{\mathrm{ce}}$ is universal.
\end{proof}

So universality is governed by a very simple eventual-stability condition on increasing finite approximations. The next corollaries show that this condition is already quite restrictive.

\begin{corollary}
Let $E^{\mathrm{ce}}$ be an equivalence relation all of whose equivalence classes contain only finitely many pairwise $=^{\mathrm{ce}}$-inequivalent elements. Then $E^{\mathrm{ce}}$ is not universal.
\end{corollary}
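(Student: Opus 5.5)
We will show that $E^{\mathrm{ce}}$ fails the locking property and then invoke \Cref{BC_characterized_by_LP}. We need a nonempty c.e.\ set $W_e$ and an increasing chain of finite sets exhausting it with infinitely many indices outside $[e]_{E^{\mathrm{ce}}}$. We take $W_e:=\omega$ and $b_i:=\iota(\omega\restriction (i+1))$, where $\iota(F)$ is a fixed index of the finite set $F$, as in the proof of \Cref{BC_characterized_by_LP}. The sets $W_{b_i}=\{0,\dots,i\}$ are finite and increasing, and their union is $\omega$. Moreover, they are pairwise distinct sets, so the indices $b_i$ are pairwise $=^{\mathrm{ce}}$-inequivalent.

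The key step uses the hypothesis. The class $[e]_{E^{\mathrm{ce}}}$ contains only finitely many pairwise $=^{\mathrm{ce}}$-inequivalent elements. Hence only finitely many of the pairwise inequivalent $b_i$ can lie in it, and so for all sufficiently large $i$ we have $b_i\nEce{E}e$. In particular, there is no $k$ with $b_i\mathrel{E^{\mathrm{ce}}}e$ for all $i\geq k$. This is the failure of the locking property, and \Cref{BC_characterized_by_LP} then shows that $E^{\mathrm{ce}}$ is not universal.

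Nothing here is a real obstacle. The only point to check is that the chosen approximations are genuinely distinct as c.e.\ sets, so that the finiteness hypothesis on the class of $e$ applies to them rather than merely to the indices. Alternatively, we could argue directly via \Cref{prop:eqce_learnable_iff_no_absorbing} with $\K:=\{\omega\restriction n:n\geq1\}\cup\{\omega\}$: for large $n$ every $\omega\restriction n$ is an absorbing witness outside $[\omega]_{E^{\mathrm{ce}}}$.
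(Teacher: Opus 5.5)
Your proposal is correct and follows essentially the same route as the paper: both take $W_e=\omega$ with the initial segments as increasing finite approximations, and observe that the class of $e$ can contain only finitely many of these pairwise distinct sets, so the locking property fails and \Cref{BC_characterized_by_LP} gives non-universality. The only cosmetic difference is that the paper indexes the approximations as $\{0,\dots,n-1\}$ (including the empty set at $n=0$), while you use $\{0,\dots,i\}$. This changes nothing.
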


\begin{proof}
Let $e$ be an index for $\omega$, and for each $n\in\omega$ let $b_n$ be an index such that $W_{b_n}=\{0,\dots,n-1\}$. Then each $W_{b_n}$ is finite, the sequence is increasing, and $\bigcup_n W_{b_n}=\omega=W_e$. If $E^{\mathrm{ce}}$ were universal, then by \Cref{BC_characterized_by_LP} there would be some $k$ such that $b_n\mathrel{E^{\mathrm{ce}}}e$ for all $n\geq k$. But the sets $W_{b_n}$ are pairwise $=^{\mathrm{ce}}$-inequivalent, whereas by assumption the $E^{\mathrm{ce}}$-class of $e$ contains only finitely many such sets. This is impossible.
\end{proof}

\begin{corollary}
Let $E^{\mathrm{ce}}$ be an equivalence relation. If there is an infinite c.e.\ set that is not $E^{\mathrm{ce}}$-equivalent to any of its proper c.e.\ subsets, then $E^{\mathrm{ce}}$ is not universal.
\end{corollary}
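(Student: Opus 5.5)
We argue by contraposition using \Cref{BC_characterized_by_LP}: assuming $E^{\mathrm{ce}}$ is universal, we show that every infinite c.e.\ set is $E^{\mathrm{ce}}$-equivalent to some proper c.e.\ subset. Alternatively, and equivalently, we directly exhibit a failure of the locking property. Fix an infinite c.e.\ set $W_e$ that is not $E^{\mathrm{ce}}$-equivalent to any of its proper c.e.\ subsets.

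The sequence is chosen as in the previous corollary. Enumerate $W_e$ as $x_0,x_1,x_2,\dots$ without repetitions; this is possible since $W_e$ is infinite. For each $n$ let $b_n$ be an index with $W_{b_n}=\{x_0,\dots,x_{n-1}\}$. These sets have the required properties:
\begin{itemize}
\item each $W_{b_n}$ is finite;
\item the sequence is increasing under inclusion;
\item its union is exactly $W_e$.
\end{itemize}
Hence $(b_n)_{n\in\omega}$ satisfies the hypotheses of the locking property for the nonempty set $W_e$.

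Next we check that every $W_{b_n}$ is a proper c.e.\ subset of $W_e$. It is a subset, it is c.e.\ because it is finite, and it is proper because $W_e$ is infinite while $W_{b_n}$ is finite. By the choice of $W_e$ we therefore get $b_n\nEce{E}e$ for every $n$. So no $k$ can satisfy $b_i\mathrel{E^{\mathrm{ce}}}e$ for all $i\ge k$, and there is no locking witness. The locking property fails, and by \Cref{BC_characterized_by_LP} the relation $E^{\mathrm{ce}}$ is not universal.

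There is no real obstacle here. The only points needing care are that the enumeration is injective, although this is not essential since the union is all that matters, and that infinitude of $W_e$ is what makes every finite approximation a \emph{proper} subset. That properness is exactly what lets the hypothesis of the corollary apply to each $b_n$.
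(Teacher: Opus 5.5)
Your proof is correct and follows the paper's argument. Both take an increasing sequence of finite sets exhausting $W_e$ and observe that each one is a proper subset, hence $E^{\mathrm{ce}}$-inequivalent to $e$. So the locking property fails, and \Cref{BC_characterized_by_LP} gives non-universality. The paper picks a computable such sequence, but, as your version shows, computability plays no role.
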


\begin{proof}
Let $W_e$ be an infinite c.e.\ set such that no proper c.e.\ subset of $W_e$ is $E^{\mathrm{ce}}$-equivalent to $W_e$. Fix a computable increasing sequence $(b_n)_{n\in\omega}$ of indices such that each $W_{b_n}$ is finite and $\bigcup_n W_{b_n}=W_e$. Since each $W_{b_n}$ is a proper subset of $W_e$, we have $b_n\nEce{E}e$ for every $n$. Thus $(b_n)_{n\in\omega}$ has no locking witness, so $E^{\mathrm{ce}}$ fails the locking property. By \Cref{BC_characterized_by_LP}, $E^{\mathrm{ce}}$ is not universal.
\end{proof}

\section{Structural properties of the tolerant learning hierarchy}
\label{section:structural_properties}

We now pass from individual tolerance relations to the global order $\TLH$.  The results in this section show that the hierarchy has a well-behaved meet structure but is highly nonlattice-like above.  We first identify the extremal elements and construct large chains and antichains.  We then prove that meets are represented by intersections of equivalence relations.  Finally, we show that joins can fail dramatically and embed copies of $\mathcal P_{\mathrm{fin}}(2^\omega)$ below every non-bottom degree and above every non-top degree, yielding atomlessness and coatomlessness.

\smallskip

The following easy observation says that collapsing equivalence classes can only increase learning power.

\begin{lemma}
\label{lem:quotienting_increases_learning_power}
Let $E^{\mathrm{ce}}$ and $F^{\mathrm{ce}}$ be equivalence relations. If $E^{\mathrm{ce}}\subseteq F^{\mathrm{ce}}$, then $E^{\mathrm{ce}}\leq_{\mathrm{Learn}}F^{\mathrm{ce}}$.
\end{lemma}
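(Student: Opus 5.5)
The plan is to argue directly from \Cref{def:learningparadigm}, reusing the learner itself; no tell-tale machinery is needed.

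Fix a family $\K$ of nonempty c.e.\ sets that is $E^{\mathrm{ce}}$-learnable, and let $\learnerM$ be a learner witnessing this. We show that the same $\learnerM$ $F^{\mathrm{ce}}$-learns $\K$. Take any $e$ with $W_e\in\K$ and any enumeration $\mu$ of $W_e$. By assumption there is some $i$ such that $\lim_{s\to\infty}\learnerM(\mu[s])=i$ and $i\mathrel{E^{\mathrm{ce}}}e$. Since $E^{\mathrm{ce}}\subseteq F^{\mathrm{ce}}$ as sets of pairs, the pair $(i,e)$ also lies in $F^{\mathrm{ce}}$, so $i\mathrel{F^{\mathrm{ce}}}e$.

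The convergence requirement in the definition does not depend on the tolerance relation, so it is unchanged. Hence $\learnerM$ is an $F^{\mathrm{ce}}$-learner for $\K$. As $\K$ was an arbitrary $E^{\mathrm{ce}}$-learnable family, \Cref{def:learn-reducibility} gives $E^{\mathrm{ce}}\leq_{\mathrm{Learn}}F^{\mathrm{ce}}$.

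There is no genuine obstacle here. The only point to check is that $F^{\mathrm{ce}}$ extends $=^{\mathrm{ce}}$, which is needed for $F^{\mathrm{ce}}$-learnability to be meaningful. This is automatic, since ${=^{\mathrm{ce}}}\subseteq E^{\mathrm{ce}}\subseteq F^{\mathrm{ce}}$.

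As a sanity check, the same conclusion also follows from \Cref{prop:eqce_learnable_iff_no_absorbing}. Every $(\K,F^{\mathrm{ce}})$-absorbing witness $W_i$ satisfies $i\nEce{F}e$, hence $i\nEce{E}e$, so it is also a $(\K,E^{\mathrm{ce}})$-absorbing witness. Thus an $F^{\mathrm{ce}}$-absorbing set would also be $E^{\mathrm{ce}}$-absorbing, which is impossible when $\K$ is $E^{\mathrm{ce}}$-learnable.
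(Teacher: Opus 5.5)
Your proof is correct and is exactly the paper's argument: the learner witnessing $E^{\mathrm{ce}}$-learnability is automatically an $F^{\mathrm{ce}}$-learner, since $i\mathrel{E^{\mathrm{ce}}}e$ implies $i\mathrel{F^{\mathrm{ce}}}e$. Your absorbing-set sanity check is also valid, though not needed.
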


\begin{proof}
Suppose that $\K$ is $E^{\mathrm{ce}}$-learnable via a learner $\learnerM$. Since $E^{\mathrm{ce}}\subseteq F^{\mathrm{ce}}$, the same learner is automatically correct as an $F^{\mathrm{ce}}$-learner for $\K$.
\end{proof}

Note that the converse implication fails in general. Also, the above implication in strict form fails. For example, let $E^{\mathrm{ce}}$ be the equivalence relation generated by $=^{\mathrm{ce}}\, \cup \, \{(e,i)\}$ for some fixed $e\nEce{=}i$. Then $=^{\mathrm{ce}}\subsetneq E^{\mathrm{ce}}$, but $=^{\mathrm{ce}}\equiv_{\mathrm{Learn}} E^{\mathrm{ce}}$: otherwise, by \Cref{more_learning_power_class_collapsed}, some single $E^{\mathrm{ce}}$-class would fail to be $=^{\mathrm{ce}}$-learnable, which is clearly impossible.

\begin{proposition}
\label{least_and_greatest}
The tolerant learning hierarchy $\TLH$ is bounded. Its bottom and top elements are respectively $[=^{\mathrm{ce}}]_{\equiv_{\mathrm{Learn}}}$ and $[\Id_1]_{\equiv_{\mathrm{Learn}}}$. In particular, $=^{\mathrm{ce}}$ is low and $\Id_1$ is universal.
\end{proposition}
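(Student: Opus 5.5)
The plan is to verify the two extremal facts directly from the definitions, using \Cref{lem:quotienting_increases_learning_power} together with the defining property of $\Id_1$.

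\emph{Bottom.} Every $E^{\mathrm{ce}}\in\mathcal R^{\mathrm{ce}}$ satisfies $=^{\mathrm{ce}}\subseteq E^{\mathrm{ce}}$ by definition. \Cref{lem:quotienting_increases_learning_power} then gives $=^{\mathrm{ce}}\leq_{\mathrm{Learn}}E^{\mathrm{ce}}$. Hence $[=^{\mathrm{ce}}]_{\equiv_{\mathrm{Learn}}}$ lies below every degree of $\TLH$. Reflexivity of $\equiv_{\mathrm{Learn}}$ gives $=^{\mathrm{ce}}\equiv_{\mathrm{Learn}}=^{\mathrm{ce}}$, so $=^{\mathrm{ce}}$ is low in the sense of \Cref{def:low_universal_weak}.

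\emph{Top.} First show that $\Id_1$ is universal: for any family $\K$ of nonempty c.e.\ sets, the constant learner $\learnerM(\sigma):=0$ converges on every enumeration to $0$, and $0\mathrel{\Id_1}e$ for every $e$. Alternatively, $\Id_1$ trivially has the locking property (take $k=0$), so \Cref{BC_characterized_by_LP} applies. Next, for any $E^{\mathrm{ce}}$, every $E^{\mathrm{ce}}$-learnable family is a family of nonempty c.e.\ sets and is therefore $\Id_1$-learnable. This gives $E^{\mathrm{ce}}\leq_{\mathrm{Learn}}\Id_1$; it also follows from \Cref{lem:quotienting_increases_learning_power}, since $E^{\mathrm{ce}}\subseteq\omega\times\omega=\Id_1$. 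One should also note that $\Id_1$ extends $=^{\mathrm{ce}}$, so $\Id_1\in\mathcal R^{\mathrm{ce}}$.

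There is no real obstacle here. The only point needing care is the remark that the order on $\mathcal E^{\mathrm{ce}}$ is well defined on $\equiv_{\mathrm{Learn}}$-classes, which is immediate because $\leq_{\mathrm{Learn}}$ is a preorder (reflexive and transitive by definition). Least and greatest elements of the quotient are then the classes of $=^{\mathrm{ce}}$ and $\Id_1$, so $\TLH$ is bounded.
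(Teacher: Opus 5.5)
Your proposal is correct and matches the paper's proof: the bottom comes from \Cref{lem:quotienting_increases_learning_power} applied to $=^{\mathrm{ce}}\subseteq E^{\mathrm{ce}}$, and the top from the fact that a constant learner $\Id_1$-learns every family. Your extra remarks (the locking-property route via \Cref{BC_characterized_by_LP}, and $E^{\mathrm{ce}}\subseteq\Id_1$) are also correct, but the direct argument already suffices.
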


\begin{proof}
Every equivalence relation considered in this paper extends $=^{\mathrm{ce}}$, so by \Cref{lem:quotienting_increases_learning_power} we have $=^{\mathrm{ce}}\leq_{\mathrm{Learn}}E^{\mathrm{ce}}$ for every $E^{\mathrm{ce}}$. On the other hand, $\Id_1$ learns every family by definition.
\end{proof}

Thus $\TLH$ has the simplest possible extremal elements. The rest of the section shows that everything in between is highly nontrivial.

\subsection{Height and width of the tolerant learning hierarchy}

A first indication that $\TLH$ is far from linear is that it already contains many incompatible ways of enlarging learning power. We now show that this happens on the scale of the continuum.

\begin{itemize}
    \item Let $P_k:=\{p_k^n:n\geq 1\}$, where $p_k$ is the $k$-th prime.
    \item For each $m\geq 1$, let $P_k[m]:=\{p_k^n:1\leq n\leq m\}$, and let $\mathcal P_k:=\{P_k[m]:m\geq 1\}\cup\{P_k\}$. Ordered by inclusion, $\mathcal P_k$ is a chain of order type $\omega+1$. The families $\mathcal P_k$ are pairwise disjoint.
    \item Given $C\subseteq\omega$, define $E(C)^{\mathrm{ce}}$ by
    \[
    j\mathrel{E(C)^{\mathrm{ce}}}\ell
    \iff
    j=^{\mathrm{ce}}\ell\ \lor\ \exists n\in C\,\bigl(W_j\in\mathcal P_n\text{ and }W_\ell\in\mathcal P_n\bigr).
    \]
\end{itemize}

\begin{proposition}
\label{prop:CsubseteqDiffEClernED}
Let $C,D\subseteq\omega$. Then
\[
C\subseteq D\quad\Longleftrightarrow\quad E(C)^{\mathrm{ce}}\leq_{\mathrm{Learn}}E(D)^{\mathrm{ce}}.
\]
\end{proposition}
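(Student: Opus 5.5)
The forward direction is immediate. If $C\subseteq D$, then every pair related by $E(C)^{\mathrm{ce}}$ is related by $E(D)^{\mathrm{ce}}$: either $j=^{\mathrm{ce}}\ell$, or both $W_j,W_\ell$ lie in some $\mathcal P_n$ with $n\in C\subseteq D$. Hence $E(C)^{\mathrm{ce}}\subseteq E(D)^{\mathrm{ce}}$, and \Cref{lem:quotienting_increases_learning_power} gives $E(C)^{\mathrm{ce}}\leq_{\mathrm{Learn}}E(D)^{\mathrm{ce}}$.

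For the converse I argue contrapositively. Suppose $n\in C\setminus D$, and take $\K:=\mathcal P_n$.

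First, $\K$ is $E(C)^{\mathrm{ce}}$-learnable, since all its members lie in a single $E(C)^{\mathrm{ce}}$-class. A learner that constantly outputs an index for $P_n$ therefore succeeds.

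Second, $\K$ is not $E(D)^{\mathrm{ce}}$-learnable. I will show that $P_n$ is $(\K,E(D)^{\mathrm{ce}})$-absorbing and then invoke \Cref{prop:eqce_learnable_iff_no_absorbing}. Let $F\subseteq P_n$ be finite. Then $F\subseteq P_n[m]$ for some $m\geq1$, and $P_n[m]\subsetneq P_n$ with $P_n[m]\in\K$. It remains to check that $P_n[m]$ and $P_n$ are $E(D)^{\mathrm{ce}}$-inequivalent.
\begin{itemize}
\item They are distinct sets, so they are not $=^{\mathrm{ce}}$-equivalent.
\item Suppose they were related through some $\mathcal P_k$ with $k\in D$. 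Then $P_n\in\mathcal P_k$. Since the families $\mathcal P_k$ are pairwise disjoint, and $P_n\in\mathcal P_n$, this forces $k=n$. But $n\notin D$, a contradiction.
\end{itemize}
Thus $P_n[m]$ is an absorbing witness for $F$, so $P_n$ is absorbing and $\K$ is not $E(D)^{\mathrm{ce}}$-learnable. Hence $E(C)^{\mathrm{ce}}\not\leq_{\mathrm{Learn}}E(D)^{\mathrm{ce}}$.

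The only step needing care is this disjointness check, which ensures that $P_n$ and $P_n[m]$ are not accidentally identified via some other family $\mathcal P_k$. Disjointness holds because distinct primes have disjoint sets of positive powers, so $P_k[m]$ and $P_{k'}[m']$ (and $P_k$, $P_{k'}$) never coincide for $k\neq k'$.
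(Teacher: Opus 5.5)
Your proposal is correct and follows the paper's proof essentially verbatim: inclusion plus \Cref{lem:quotienting_increases_learning_power} for one direction, and for the other, $\mathcal P_n$ with $n\in C\setminus D$ is trivially $E(C)^{\mathrm{ce}}$-learnable while $P_n$ is $(\mathcal P_n,E(D)^{\mathrm{ce}})$-absorbing via the witnesses $P_n[m]$. Your explicit use of the pairwise disjointness of the families $\mathcal P_k$ to rule out an accidental identification through some other $\mathcal P_k$ fills in the step the paper states without comment (``since $n\notin D$'').
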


\begin{proof}
If $C\subseteq D$, then by definition $E(C)^{\mathrm{ce}}\subseteq E(D)^{\mathrm{ce}}$, so the result follows from \Cref{lem:quotienting_increases_learning_power}.

Conversely, suppose that $C\not\subseteq D$, and choose $n\in C\setminus D$. The family $\mathcal P_n$ is trivially $E(C)^{\mathrm{ce}}$-learnable, since all its members lie in one $E(C)^{\mathrm{ce}}$-class. On the other hand, $P_n$ is $(\mathcal P_n,E(D)^{\mathrm{ce}})$-absorbing: given any finite $F\subseteq P_n$, choose $m\geq 1$ with $F\subseteq P_n[m]\subsetneq P_n$; since $n\notin D$, we have $P_n[m]\nEce{E(D)}P_n$. By \Cref{prop:eqce_learnable_iff_no_absorbing}, $\mathcal P_n$ is not $E(D)^{\mathrm{ce}}$-learnable.
\end{proof}

\begin{corollary}
\label{uncountable_chains_antichains}
The powerset of the natural numbers under inclusion, $(\mathcal P(\omega),\subseteq)$, embeds into $\TLH$. In particular, $\TLH$ has antichains and chains of size $2^{\aleph_0}$, and both its width and its height are exactly $2^{\aleph_0}$.
\end{corollary}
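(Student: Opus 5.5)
The plan is to use \Cref{prop:CsubseteqDiffEClernED} directly. The map $C\mapsto[E(C)^{\ce}]_{\equiv_{\Learn}}$ from $\mathcal P(\omega)$ to $\TLH$ is an order embedding: it preserves and reflects order by that proposition. It is also injective, because if $E(C)^{\ce}\equiv_{\Learn}E(D)^{\ce}$ then both inclusions $C\subseteq D$ and $D\subseteq C$ hold, so $C=D$. This gives the embedding of $(\mathcal P(\omega),\subseteq)$ into $\TLH$. Chains and antichains are therefore transported from $\mathcal P(\omega)$ to $\TLH$, so it suffices to exhibit chains and antichains of size $2^{\aleph_0}$ in $(\mathcal P(\omega),\subseteq)$.

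For a chain, I identify $\omega$ with $\mathbb Q$ via a bijection and send each real $r$ to the Dedekind cut $\{q\in\mathbb Q:q<r\}$. Distinct reals give distinct cuts, and these cuts are linearly ordered by inclusion. For an antichain, I fix a bijection $\omega\to\cantor$ and send each $f\in\Cantor$ to the set of (codes of) its initial segments $\{f[n]:n\in\omega\}$. If $f\neq g$, they differ at some coordinate, so some initial segment of $f$ is not one of $g$ and conversely. Hence these $2^{\aleph_0}$ sets are pairwise $\subseteq$-incomparable.

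For the upper bounds, I count: $\mathcal R^{\ce}$ consists of subsets of $\omega\times\omega$, so $|\mathcal R^{\ce}|\le 2^{\aleph_0}$, and hence $|\mathcal E^{\ce}|\le 2^{\aleph_0}$. Every chain and antichain thus has size at most $2^{\aleph_0}$, and so the height and width are exactly $2^{\aleph_0}$. There is no genuine obstacle here. The only point needing care is to note explicitly that injectivity of the embedding on $\equiv_{\Learn}$-classes follows from the biconditional in \Cref{prop:CsubseteqDiffEClernED}.
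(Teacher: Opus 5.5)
Your proof is correct and follows the paper's own argument: the map $C\mapsto[E(C)^{\ce}]_{\equiv_{\Learn}}$ is an order embedding by \Cref{prop:CsubseteqDiffEClernED}, and the upper bound comes from there being only $2^{\aleph_0}$ equivalence relations on $\omega$. Your Dedekind-cut chain and your antichain of sets of initial segments in $\mathcal P(\omega)$ supply, correctly, the standard facts that the paper leaves implicit.
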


\begin{proof}
Define $\iota(A):=[E(A)^{\mathrm{ce}}]_{\equiv_{\mathrm{Learn}}}$ for $A\subseteq\omega$. By \Cref{prop:CsubseteqDiffEClernED}, this is an order embedding of $(\mathcal P(\omega),\subseteq)$ into $\TLH$. Since there are only $2^{\aleph_0}$ equivalence relations on $\omega$, the upper bounds on the width and height follow as well.
\end{proof}

\subsection{The tolerant learning hierarchy is a lower semilattice}

We now turn to the algebraic structure of $\TLH$. The next proposition shows that meets are given by the most obvious candidate, namely intersection of equivalence relations.

\begin{proposition}
\label{proposition:meetintersection}
Let $E^{\mathrm{ce}}$ and $F^{\mathrm{ce}}$ be equivalence relations. A family $\K$ is $(E^{\mathrm{ce}}\cap F^{\mathrm{ce}})$-learnable if and only if it is both $E^{\mathrm{ce}}$-learnable and $F^{\mathrm{ce}}$-learnable.
\end{proposition}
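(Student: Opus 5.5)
The plan is to prove both directions through the absorbing-set characterisation of \Cref{prop:eqce_learnable_iff_no_absorbing}. First, though, check that $E^{\ce}\cap F^{\ce}$ is a legitimate relation. It is an equivalence relation, being an intersection of two, and it extends $=^{\ce}$ because both $E^{\ce}$ and $F^{\ce}$ do.

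The left-to-right direction is immediate. We have $E^{\ce}\cap F^{\ce}\subseteq E^{\ce}$ and $E^{\ce}\cap F^{\ce}\subseteq F^{\ce}$. So \Cref{lem:quotienting_increases_learning_power} shows that any $(E^{\ce}\cap F^{\ce})$-learner for $\K$ is already an $E^{\ce}$-learner and an $F^{\ce}$-learner for $\K$.

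For the right-to-left direction, argue by contraposition. Suppose $\K$ is not $(E^{\ce}\cap F^{\ce})$-learnable. By \Cref{prop:eqce_learnable_iff_no_absorbing} there is a $(\K,E^{\ce}\cap F^{\ce})$-absorbing set $W_e\in\K$. Fix a sequence $(W_{u_n})_{n\in\omega}$ of canonical absorbing witnesses for it. For each $n$ we have $W_e\restriction n\subseteq W_{u_n}\subsetneq W_e$, $W_{u_n}\in\K$, and $(u_n,e)\notin E^{\ce}\cap F^{\ce}$. The last condition means $u_n\nEce{E}e$ or $u_n\nEce{F}e$. By pigeonhole, one of these alternatives holds for infinitely many $n$; say $u_n\nEce{E}e$ for all $n$ in an infinite set $I$. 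The subsequence $(W_{u_n})_{n\in I}$ is still a canonical witness sequence for $W_e$ by \Cref{obs:canonical_witnesses_basic_facts}(b). Its members are additionally $E^{\ce}$-inequivalent to $e$, so it witnesses that $W_e$ is $(\K,E^{\ce})$-absorbing. A second application of \Cref{prop:eqce_learnable_iff_no_absorbing} then shows that $\K$ is not $E^{\ce}$-learnable. The case where the $F^{\ce}$ alternative holds infinitely often is symmetric and shows $\K$ is not $F^{\ce}$-learnable.

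The only step needing care is the passage to a subsequence. One must confirm that a witness for $W_e\restriction n$ with $n\in I$ still serves as a witness for every smaller initial segment. This is exactly what \Cref{obs:canonical_witnesses_basic_facts}(b) provides. Beyond that, I expect no real obstacle.
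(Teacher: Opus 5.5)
Your proposal is correct and follows the paper's proof essentially step for step. The easy direction uses \Cref{lem:quotienting_increases_learning_power}, and the converse is by contraposition: take a $(\K,E^{\mathrm{ce}}\cap F^{\mathrm{ce}})$-absorbing set with a canonical witness sequence, pigeonhole on which relation fails infinitely often, and pass to a subsequence via \Cref{obs:canonical_witnesses_basic_facts}(b).
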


\begin{proof}
The left-to-right direction is immediate from \Cref{lem:quotienting_increases_learning_power}, since $E^{\mathrm{ce}}\cap F^{\mathrm{ce}}\subseteq E^{\mathrm{ce}}$ and $E^{\mathrm{ce}}\cap F^{\mathrm{ce}}\subseteq F^{\mathrm{ce}}$.

For the converse, we prove the contrapositive. Assume that $\K$ is not $(E^{\mathrm{ce}}\cap F^{\mathrm{ce}})$-learnable. By \Cref{prop:eqce_learnable_iff_no_absorbing}, there is a $(\K,E^{\mathrm{ce}}\cap F^{\mathrm{ce}})$-absorbing set $W_e\in\K$ together with a sequence $(W_{u_i})_{i\in\omega}$ of canonical absorbing witnesses for $W_e$.

For each $i$, since $u_i\mathrel{\cancel{(E^{\mathrm{ce}}\cap F^{\mathrm{ce}})}}e$, at least one of the relations $u_i\nEce{E}e$ or $u_i\nEce{F}e$ holds. One of these alternatives must hold for infinitely many $i$. Without loss of generality, suppose that $u_i\nEce{E}e$ for infinitely many $i$, and pass to the corresponding infinite subsequence. By \Cref{obs:canonical_witnesses_basic_facts}, this subsequence is still canonical for $W_e$. Hence it witnesses that $W_e$ is $(\K,E^{\mathrm{ce}})$-absorbing. By \Cref{prop:eqce_learnable_iff_no_absorbing}, the family $\K$ is not $E^{\mathrm{ce}}$-learnable.

Thus if $\K$ fails to be $(E^{\mathrm{ce}}\cap F^{\mathrm{ce}})$-learnable, then it fails to be $E^{\mathrm{ce}}$-learnable or it fails to be $F^{\mathrm{ce}}$-learnable. This proves the contrapositive.
\end{proof}

\begin{corollary}
\label{lower_semilattice}
The structure $\TLH$ is a lower semilattice. For every pair of equivalence relations $E^{\mathrm{ce}}$ and $F^{\mathrm{ce}}$, the meet of $[E^{\mathrm{ce}}]_{\equiv_{\mathrm{Learn}}}$ and $[F^{\mathrm{ce}}]_{\equiv_{\mathrm{Learn}}}$ is $[E^{\mathrm{ce}}\cap F^{\mathrm{ce}}]_{\equiv_{\mathrm{Learn}}}$.
\end{corollary}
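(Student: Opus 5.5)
The plan is to derive the corollary directly from \Cref{proposition:meetintersection}, together with \Cref{lem:quotienting_increases_learning_power} and the observation that $\leq$ on $\TLH$ is defined through $\leq_{\mathrm{Learn}}$ on representatives.

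First, check that $E^{\mathrm{ce}}\cap F^{\mathrm{ce}}$ is a legitimate element of $\mathcal R^{\mathrm{ce}}$. An intersection of two equivalence relations is again an equivalence relation, and since both relations contain $=^{\mathrm{ce}}$, so does their intersection. Hence $[E^{\mathrm{ce}}\cap F^{\mathrm{ce}}]_{\equiv_{\mathrm{Learn}}}$ is a degree of $\TLH$.

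Next, show that this degree is a lower bound. Because $E^{\mathrm{ce}}\cap F^{\mathrm{ce}}\subseteq E^{\mathrm{ce}}$ and $E^{\mathrm{ce}}\cap F^{\mathrm{ce}}\subseteq F^{\mathrm{ce}}$, \Cref{lem:quotienting_increases_learning_power} gives $E^{\mathrm{ce}}\cap F^{\mathrm{ce}}\leq_{\mathrm{Learn}}E^{\mathrm{ce}}$ and $E^{\mathrm{ce}}\cap F^{\mathrm{ce}}\leq_{\mathrm{Learn}}F^{\mathrm{ce}}$.

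Then show that it is the greatest lower bound. Let $G^{\mathrm{ce}}$ satisfy $G^{\mathrm{ce}}\leq_{\mathrm{Learn}}E^{\mathrm{ce}}$ and $G^{\mathrm{ce}}\leq_{\mathrm{Learn}}F^{\mathrm{ce}}$, and let $\K$ be any $G^{\mathrm{ce}}$-learnable family. By the two reductions, $\K$ is both $E^{\mathrm{ce}}$-learnable and $F^{\mathrm{ce}}$-learnable. The right-to-left direction of \Cref{proposition:meetintersection} then makes $\K$ $(E^{\mathrm{ce}}\cap F^{\mathrm{ce}})$-learnable, so $G^{\mathrm{ce}}\leq_{\mathrm{Learn}}E^{\mathrm{ce}}\cap F^{\mathrm{ce}}$.

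Finally, confirm that everything is well defined on degrees. Each of the facts above is stated purely in terms of $\leq_{\mathrm{Learn}}$, which is invariant under $\equiv_{\mathrm{Learn}}$, so the greatest lower bound is the same whichever representatives of the two classes are chosen. In particular, if $E^{\mathrm{ce}}\equiv_{\mathrm{Learn}}E'^{\mathrm{ce}}$ and $F^{\mathrm{ce}}\equiv_{\mathrm{Learn}}F'^{\mathrm{ce}}$, the uniqueness of greatest lower bounds in a poset forces $E^{\mathrm{ce}}\cap F^{\mathrm{ce}}\equiv_{\mathrm{Learn}}E'^{\mathrm{ce}}\cap F'^{\mathrm{ce}}$. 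There is no real obstacle: the only substantive ingredient is \Cref{proposition:meetintersection}, which is already proved.
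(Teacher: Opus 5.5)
Your proposal is correct and matches the argument the paper leaves implicit. The paper reads the corollary off \Cref{proposition:meetintersection}: the lower-bound half comes from \Cref{lem:quotienting_increases_learning_power}, and the greatest-lower-bound half from the right-to-left direction of that proposition, exactly as you argue; your checks that $E^{\mathrm{ce}}\cap F^{\mathrm{ce}}\in\mathcal R^{\mathrm{ce}}$ and that the meet does not depend on the choice of representatives are correct additions.
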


Thus intersection gives the right notion of meet. The natural next question is whether some corresponding construction yields joins. The answer is negative. In particular, the equivalence closure of the union $E^{\mathrm{ce}}\cup F^{\mathrm{ce}}$ need not be a join. Indeed, let $E^{\mathrm{ce}}$ be obtained from $=^{\mathrm{ce}}$ by collapsing all indices of $\omega$ and $\{0\}$ into one class, and let $F^{\mathrm{ce}}$ be obtained from $=^{\mathrm{ce}}$ by collapsing all indices of $\omega\restriction n$, for $n\geq 1$, into one class. It is easy to check that $E^{\mathrm{ce}}\equiv_{\mathrm{Learn}}F^{\mathrm{ce}}\equiv_{\mathrm{Learn}}=^{\mathrm{ce}}$. Let $G^{\mathrm{ce}}$ be the smallest equivalence relation containing $E^{\mathrm{ce}}\cup F^{\mathrm{ce}}$. Then all sets $\omega,\omega\restriction 1,\omega\restriction 2,\omega\restriction 3,\dots$ belong to one $G^{\mathrm{ce}}$-class, so $\{\omega\restriction n:n\geq 1\}\cup\{\omega\}$ is $G^{\mathrm{ce}}$-learnable. But this family is not $=^{\mathrm{ce}}$-learnable. Hence $=^{\mathrm{ce}}<_{\mathrm{Learn}}G^{\mathrm{ce}}$, so $G^{\mathrm{ce}}$ cannot be the join of $[E^{\mathrm{ce}}]_{\equiv_{\mathrm{Learn}}}$ and $[F^{\mathrm{ce}}]_{\equiv_{\mathrm{Learn}}}$.

We now show that joins fail in a much stronger sense: some pairs have no minimal common upper bound at all.

\begin{definition}
Let $E^{\mathrm{ce}}$ and $F^{\mathrm{ce}}$ be equivalence relations. We say that $E^{\mathrm{ce}}$ is an \define{almost quotient} of $F^{\mathrm{ce}}$ if for every index $e\in\omega$, the set
\[
\{W_i:i\mathrel{F^{\mathrm{ce}}}e\text{ and }i\mathrel{\cancel{E^{\mathrm{ce}}}}e\}
\]
is finite.
\end{definition}

\begin{lemma}
\label{almost_quotient_increases_learning}
If $E^{\mathrm{ce}}$ is an almost quotient of $F^{\mathrm{ce}}$, then $F^{\mathrm{ce}}\leq_{\mathrm{Learn}}E^{\mathrm{ce}}$.
\end{lemma}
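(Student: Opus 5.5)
The plan is to argue by contraposition through absorbing sets, using \Cref{prop:eqce_learnable_iff_no_absorbing} together with the finiteness of the almost-quotient exception sets. Fix a family $\K$ that is $F^{\ce}$-learnable. We must show that $\K$ is $E^{\ce}$-learnable. Suppose not. Then \Cref{prop:eqce_learnable_iff_no_absorbing} gives a $(\K,E^{\ce})$-absorbing set $W_e\in\K$ and a sequence $(W_{u_n})_{n\in\omega}$ of canonical $(\K,E^{\ce})$-absorbing witnesses for $W_e$. Each witness satisfies $u_n\nEce{E}e$ and $W_e\restriction n\subseteq W_{u_n}\subsetneq W_e$.

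The key step is to split the witnesses according to whether $u_n\mathrel{F^{\ce}}e$.

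\emph{Case 1: infinitely many $n$ satisfy $u_n\nEce{F}e$.} Pass to that infinite subsequence. By \Cref{obs:canonical_witnesses_basic_facts}(b), it is still a canonical witness sequence, and its indices are $F^{\ce}$-inequivalent to $e$. So it witnesses that $W_e$ is $(\K,F^{\ce})$-absorbing, and \Cref{prop:eqce_learnable_iff_no_absorbing} then says $\K$ is not $F^{\ce}$-learnable. This is a contradiction.

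\emph{Case 2: for cofinitely many $n$, $u_n\mathrel{F^{\ce}}e$.} For those $n$ we also have $u_n\nEce{E}e$. Hence each such $W_{u_n}$ lies in the set $\{W_i:i\mathrel{F^{\ce}}e,\ i\nEce{E}e\}$, which is finite because $E^{\ce}$ is an almost quotient of $F^{\ce}$. Consequently some infinite subsequence of canonical witnesses takes only finitely many distinct values. Again by \Cref{obs:canonical_witnesses_basic_facts}(b), that subsequence is itself a canonical witness sequence. This contradicts \Cref{obs:canonical_witnesses_basic_facts}(a), which says such a sequence contains infinitely many pairwise distinct sets.

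Both cases are contradictory, so $\K$ is $E^{\ce}$-learnable, which proves $F^{\ce}\leq_{\Learn}E^{\ce}$. The only delicate point is the appeal to \Cref{obs:canonical_witnesses_basic_facts}. Part (b) is needed so that the subsequence extracted in each case is still canonical, and part (a) is exactly what rules out Case 2. Beyond that, the argument is routine bookkeeping with the absorbing-set characterisation.
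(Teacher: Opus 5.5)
Your proof is correct and follows essentially the same route as the paper. Both arguments take canonical $(\K,E^{\ce})$-absorbing witnesses for $W_e$ and use \Cref{obs:canonical_witnesses_basic_facts}(a) together with the finiteness of $\{W_i: i\mathrel{F^{\ce}}e,\ i\nEce{E}e\}$ to show that only finitely many witnesses can be $F^{\ce}$-equivalent to $e$; part (b) then makes $W_e$ $(\K,F^{\ce})$-absorbing. The only difference is cosmetic: the paper first thins the sequence to pairwise distinct witnesses and argues directly, whereas you split into cases and use part (a) to rule out the second case.
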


\begin{proof}
Let $E^{\mathrm{ce}}$ be an almost quotient of $F^{\mathrm{ce}}$. We show that every family that is not $E^{\mathrm{ce}}$-learnable is also not $F^{\mathrm{ce}}$-learnable.

Let $\K$ be a family of c.e.\ sets that is not $E^{\mathrm{ce}}$-learnable. By \Cref{prop:eqce_learnable_iff_no_absorbing}, there is a $(\K,E^{\mathrm{ce}})$-absorbing set $W_e\in\K$ together with a canonical sequence $(W_{u_n})_{n\in\omega}$ of $(\K,E^{\mathrm{ce}})$-absorbing witnesses for $W_e$. In particular, $u_n\mathrel{\cancel{E^{\mathrm{ce}}}}e$ for every $n$.

By \Cref{obs:canonical_witnesses_basic_facts}(a), the sequence $(W_{u_n})_{n\in\omega}$ contains infinitely many pairwise distinct sets. Passing to an infinite subsequence if necessary, we may assume that the sets $W_{u_{n_k}}$ are pairwise $=^{\mathrm{ce}}$-inequivalent. By \Cref{obs:canonical_witnesses_basic_facts}(b), this subsequence is still a canonical witness sequence for $W_e$.

Now suppose that $u_{n_k}\mathrel{F^{\mathrm{ce}}}e$. Since also $u_{n_k}\mathrel{\cancel{E^{\mathrm{ce}}}}e$, the corresponding set $W_{u_{n_k}}$ belongs to
\[
\{W_i:i\mathrel{F^{\mathrm{ce}}}e\text{ and }i\mathrel{\cancel{E^{\mathrm{ce}}}}e\}.
\]
Because $E^{\mathrm{ce}}$ is an almost quotient of $F^{\mathrm{ce}}$, this set is finite. But the sets $W_{u_{n_k}}$ are pairwise distinct, so only finitely many $k$ can satisfy $u_{n_k}\mathrel{F^{\mathrm{ce}}}e$.

Therefore there is an infinite subsequence $(W_{u_{n_k}})_{k\in I}$ such that $u_{n_k}\mathrel{\cancel{F^{\mathrm{ce}}}}e$ for every $k\in I$. By \Cref{obs:canonical_witnesses_basic_facts}(b), this is again a canonical witness sequence for $W_e$. Thus $W_e$ is $(\K,F^{\mathrm{ce}})$-absorbing, and by \Cref{prop:eqce_learnable_iff_no_absorbing}, $\K$ is not $F^{\mathrm{ce}}$-learnable.
\end{proof}

\begin{theorem}
\label{thm:not_upper_semilattice}
The tolerant learning hierarchy $\TLH$ is not an upper semilattice. In fact, there are equivalence relations $E^{\mathrm{ce}}$ and $F^{\mathrm{ce}}$ such that
\[
\bigl\{[D^{\mathrm{ce}}]_{\equiv_{\mathrm{Learn}}}:E^{\mathrm{ce}},F^{\mathrm{ce}}\leq_{\mathrm{Learn}}D^{\mathrm{ce}}\bigr\}
\]
has no minimal element.
\end{theorem}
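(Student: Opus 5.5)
The plan is to show that every common upper bound $D^{\ce}$ of suitably chosen $E^{\ce}$ and $F^{\ce}$ can be strictly improved: there is $D'^{\ce}$ with $E^{\ce},F^{\ce}\leq_{\Learn}D'^{\ce}<_{\Learn}D^{\ce}$. This proves the theorem, and in particular shows that $E^{\ce}$ and $F^{\ce}$ have no join.

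\emph{Construction of $E^{\ce}$ and $F^{\ce}$.} Since the almost-quotient lemma (\Cref{almost_quotient_increases_learning}) shows that changing finitely many sets per class does not change the degree, the construction must force each upper bound to do an infinite amount of collapsing. It must also leave that bound infinitely many free choices. I would use infinitely many disjoint ``blocks'', in the style of the families $\mathcal P_k$. In block $k$ there is an infinite target $A_k$ with initial segments $A_k[m]$, and for each $m$ two incomparable proper subsets $X_{k,m}:=A_k[m]\cup\{a_k\}$ and $Y_{k,m}:=A_k[m]\cup\{b_k\}$, where $a_k,b_k\in A_k$ lie far out. Let $E^{\ce}$ collapse $A_k$ with all $X_{k,m}$, and let $F^{\ce}$ collapse $A_k$ with all $Y_{k,m}$.

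\emph{Characterising the upper bounds.} By the bridge \Cref{more_learning_power_class_collapsed}, combined with \Cref{prop:eqce_learnable_iff_no_absorbing}, the families learnable by $E^{\ce}$ but not by $=^{\ce}$ are essentially those with an absorbing $A_k$ whose witnesses lie in the $E^{\ce}$-class of $A_k$. I would show that $D^{\ce}$ is an upper bound of both $E^{\ce}$ and $F^{\ce}$ iff, for every $k$, cofinitely many $X_{k,m}$ and cofinitely many $Y_{k,m}$ are $D^{\ce}$-equivalent to $A_k$. Both directions are absorbing-witness arguments: pass to infinite subsequences via \Cref{obs:canonical_witnesses_basic_facts}, and use pigeonhole as in \Cref{proposition:meetintersection}.

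\emph{Improving a given upper bound.} Given an upper bound $D^{\ce}$, I would exploit whatever extra collapsing $D^{\ce}$ must perform beyond this minimal requirement. One source is collapsing that the upper-bound condition forces on related sets, such as the unions $X_{k,m}\cup Y_{k,m}$ or the sets $A_k\setminus\{a_k\}$, depending on the exact design. For an infinite sparse set $S$ of blocks I would ``uncollapse'' one such forced-free class per block $k\in S$, obtaining $D'^{\ce}\subseteq D^{\ce}$. Then $D'^{\ce}$ is still an upper bound, since the characterisation is checked blockwise and cofinitely. Also $D'^{\ce}\leq_{\Learn}D^{\ce}$ by \Cref{lem:quotienting_increases_learning_power}.

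\emph{Main obstacle: strictness.} The step I expect to be hardest is proving $D'^{\ce}<_{\Learn}D^{\ce}$. It requires a family that is $D^{\ce}$-learnable but has a $D'^{\ce}$-absorbing member, and the difference between $D'^{\ce}$ and $D^{\ce}$ must not be an almost quotient. Hence the uncollapsed pieces must accumulate inside a single class, or along a single absorbing target. I would first try adding one extra target $B$ per design, for instance $B=\bigcup_k A_k$ (possibly glued through the blocks), whose natural absorbing witnesses pass through the blocks in $S$. Then infinitely many uncollapsed pairs all live in the class of $B$. If this fails, I would instead build $E^{\ce}$ and $F^{\ce}$ over a $2^\omega$-indexed family of blocks, so that ``which infinite set of blocks to uncollapse'' gives the needed freedom.
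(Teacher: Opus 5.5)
There is a genuine gap. The decisive step is turning an arbitrary common upper bound into a strictly smaller one, and you never carry it out. The design you commit to cannot support it.

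For $X_{k,m}$ and $Y_{k,m}$ to be incomparable for every $m$, the points $a_k,b_k$ must stay outside all $A_k[m]$. But then the finite set $\{b_k\}\subseteq A_k$ lies in no $X_{k,m}$, so $A_k$ is never absorbing inside its $E^{\ce}$-class. Hence no single $E^{\ce}$-class contains a non-$=^{\ce}$-learnable family, and by \Cref{more_learning_power_class_collapsed} $E^{\ce}$ is low. The same holds for $F^{\ce}$, so $[=^{\ce}]_{\equiv_{\Learn}}$ is their join. If instead $a_k,b_k$ eventually enter $A_k[m]$, then $X_{k,m}=Y_{k,m}=A_k[m]$ for all large $m$, and $E^{\ce}\equiv_{\Learn}F^{\ce}$.

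More fundamentally, the characterisation you aim for defeats itself for any design in which upper bounds are described blockwise by cofinitely many collapses onto the block's own target. Let $G^{\ce}$ collapse each $A_k$ with all $X_{k,m}$ and $Y_{k,m}$. It contains $E^{\ce}\cup F^{\ce}$, so it is a common upper bound by \Cref{lem:quotienting_increases_learning_power}. Now take any common upper bound $D^{\ce}$ and a $G^{\ce}$-learnable family $\K$ with a $(\K,D^{\ce})$-absorbing set. As in the proof of \Cref{almost_quotient_increases_learning}, cofinitely many of its pairwise distinct witnesses must lie in its own $G^{\ce}$-class. So it must be some $A_k$ with infinitely many distinct witnesses among the $X_{k,m},Y_{k,m}$, all $D^{\ce}$-inequivalent to $A_k$. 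Your characterisation forbids this. Hence $G^{\ce}\leq_{\Learn}D^{\ce}$, $[G^{\ce}]_{\equiv_{\Learn}}$ is the least common upper bound, and your improvement step must fail at $D^{\ce}=G^{\ce}$. With disjoint blocks and the two relations acting in parallel on the same targets, transitivity forces nothing beyond the equivalence closure of $E^{\ce}\cup F^{\ce}$.

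Your heuristics are right: each upper bound must be forced into infinitely much collapsing, with infinitely many free choices, and the removable pieces must accumulate along one absorbing target. The missing idea is how to realise this. The two relations should act at different levels on nested targets, so that transitivity composes them.

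The paper works in $\omega^2$ with the finite rectangles $X_{i,j}$, the column unions $X_{i,\omega}$, and $X_{\omega,\omega}=\omega^2$. $E^{\ce}$ collapses each $\{X_{i,j}:j\in\omega\}\cup\{X_{i,\omega}\}$, and $F^{\ce}$ collapses $\{X_{i,\omega}:i\in\omega\}\cup\{X_{\omega,\omega}\}$. The common upper bounds are exactly the $D^{\ce}$ with two properties:
\begin{itemize}
\item[(a)] for each $i$, cofinitely many $X_{i,j}$ are $D^{\ce}$-equivalent to $X_{i,\omega}$;
\item[(b)] cofinitely many $X_{i,\omega}$ are $D^{\ce}$-equivalent to $X_{\omega,\omega}$.
\end{itemize}
By transitivity, every such $D^{\ce}$ puts some diagonal $X_{i,l_i}$, $i\geq k$, into the class of $X_{\omega,\omega}$. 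These sets are finite, increasing and exhaust $\omega^2$, so $\{X_{i,l_i}:i\geq k\}\cup\{X_{\omega,\omega}\}$ is $D^{\ce}$-learnable. Yet no particular diagonal is required. Splitting off exactly these sets removes only one set per column, so (a) and (b) survive. The resulting $\widehat D^{\ce}\subseteq D^{\ce}$ is therefore a common upper bound, but it does not learn that family, so $\widehat D^{\ce}<_{\Learn}D^{\ce}$.

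Your idea of an extra target $B=\bigcup_k A_k$ points in this direction. It only works if the $A_k$ increase to $B$ and $F^{\ce}$ itself collapses the $A_k$ with $B$, so that the diagonal collapse is forced rather than chosen.
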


\begin{proof}
Using the fixed computable pairing function, identify subsets of $\omega^2$
with subsets of $\omega$ throughout the proof. We define two simple equivalence relations whose common upper bounds can be characterised by a pair of eventual-collapse conditions.

For $i,j\in\omega$, let
\begin{align*}
X_{i,j}&:=\{(i',j')\in\omega^2:i'\leq i\text{ and }j'\leq j\},\\
X_{i,\omega}&:=\{(i',j')\in\omega^2:i'\leq i\text{ and }j'\in\omega\},\\
X_{\omega,\omega}&:=\omega^2.
\end{align*}
For each $i\in\omega$, let
\[
\mathcal E_i:=\{X_{i,j}:j\in\omega\}\cup\{X_{i,\omega}\},
\]
and let
\[
\mathcal F:=\{X_{i,\omega}:i\in\omega\}\cup\{X_{\omega,\omega}\}.
\]
Define equivalence relations $E^{\mathrm{ce}}$ and $F^{\mathrm{ce}}$ by
\[
u\mathrel{E^{\mathrm{ce}}}v\iff u=^{\mathrm{ce}}v\ \lor\ \exists i\in\omega\,(W_u,W_v\in\mathcal E_i),
\]
and
\[
u\mathrel{F^{\mathrm{ce}}}v\iff u=^{\mathrm{ce}}v\ \lor\ (W_u,W_v\in\mathcal F).
\]
Thus the nontrivial $E^{\mathrm{ce}}$-classes are exactly the columns $\mathcal E_i$, while the only nontrivial $F^{\mathrm{ce}}$-class is $\mathcal F$; see \Cref{fig:equiv_rels_2d_grid}.

\begin{figure}[H]
    \begin{center}
        \begin{tikzpicture}[every node/.style={inner sep=0.8pt},labelstyle/.style={font=\large\bfseries}]
\begin{scope}[xshift=0cm]
\node[labelstyle] at (1.2,3.9) {$E^{\mathrm{ce}}$};
\foreach \x in {0,1.2,2.4} {
  \draw[rounded corners] (\x-0.45,-0.25) rectangle (\x+0.45,3.25);
}
\node at (0,0) {$X_{0,0}$};
\node at (1.2,0) {$X_{1,0}$};
\node at (2.4,0) {$X_{2,0}$};
\node at (0,0.85) {$X_{0,1}$};
\node at (1.2,0.85) {$X_{1,1}$};
\node at (2.4,0.85) {$X_{2,1}$};
\node at (0,1.7) {$X_{0,2}$};
\node at (1.2,1.7) {$X_{1,2}$};
\node at (2.4,1.7) {$X_{2,2}$};
\node at (0,2.35) {$\vdots$};
\node at (1.2,2.35) {$\vdots$};
\node at (2.4,2.35) {$\vdots$};
\node at (0,2.9) {$X_{0,\omega}$};
\node at (1.2,2.9) {$X_{1,\omega}$};
\node at (2.4,2.9) {$X_{2,\omega}$};
\node at (3.4,2.9) {$\cdots$};
\node at (4.3,2.9) {$X_{\omega,\omega}$};
\end{scope}
\begin{scope}[xshift=6cm]
\node[labelstyle] at (1.2,3.9) {$F^{\mathrm{ce}}$};
\draw[rounded corners] (-0.45,2.6) rectangle (4.7,3.25);
\node at (0,0) {$X_{0,0}$};
\node at (1.2,0) {$X_{1,0}$};
\node at (2.4,0) {$X_{2,0}$};
\node at (0,0.85) {$X_{0,1}$};
\node at (1.2,0.85) {$X_{1,1}$};
\node at (2.4,0.85) {$X_{2,1}$};
\node at (0,1.7) {$X_{0,2}$};
\node at (1.2,1.7) {$X_{1,2}$};
\node at (2.4,1.7) {$X_{2,2}$};
\node at (0,2.35) {$\vdots$};
\node at (1.2,2.35) {$\vdots$};
\node at (2.4,2.35) {$\vdots$};
\node at (0,2.9) {$X_{0,\omega}$};
\node at (1.2,2.9) {$X_{1,\omega}$};
\node at (2.4,2.9) {$X_{2,\omega}$};
\node at (3.4,2.9) {$\cdots$};
\node at (4.3,2.9) {$X_{\omega,\omega}$};
\end{scope}
\end{tikzpicture}
    \end{center}
    \caption{The equivalence relations $E^{\mathrm{ce}}$ and $F^{\mathrm{ce}}$ used in the proof of \Cref{thm:not_upper_semilattice}. Rectangular regions indicate nontrivial equivalence classes; all other sets are singletons modulo $=^{\mathrm{ce}}$.}
    \label{fig:equiv_rels_2d_grid}
\end{figure}

We claim that for an equivalence relation $D^{\mathrm{ce}}$ the following are equivalent:
\begin{itemize}
    \item[(i)] $E^{\mathrm{ce}},F^{\mathrm{ce}}\leq_{\mathrm{Learn}}D^{\mathrm{ce}}$;
    \item[(ii)] both of the following hold:
    \begin{itemize}
        \item[(a)] for every $i\in\omega$, for all but finitely many $j\in\omega$, we have $X_{i,j}\mathrel{D^{\mathrm{ce}}}X_{i,\omega}$;
        \item[(b)] for all but finitely many $i\in\omega$, we have $X_{i,\omega}\mathrel{D^{\mathrm{ce}}}X_{\omega,\omega}$.
    \end{itemize}
\end{itemize}

\medskip
\noindent\emph{Proof of $(i)\Rightarrow(ii)$.}
Assume that $E^{\mathrm{ce}},F^{\mathrm{ce}}\leq_{\mathrm{Learn}}D^{\mathrm{ce}}$.

Suppose that (a) fails. Then for some fixed $i$ there are infinitely many $j$ such that $X_{i,j}\mathrel{\cancel{D^{\mathrm{ce}}}}X_{i,\omega}$. Let
\[
\K_i:=\{X_{i,j}:j\in\omega\}\cup\{X_{i,\omega}\}.
\]
Since all members of $\K_i$ lie in one $E^{\mathrm{ce}}$-equivalence class, the family $\K_i$ is $E^{\mathrm{ce}}$-learnable. On the other hand, $X_{i,\omega}$ is $(\K_i,D^{\mathrm{ce}})$-absorbing: given a finite $F\subseteq X_{i,\omega}$, choose $j$ so large that $F\subseteq X_{i,j}$ and still $X_{i,j}\mathrel{\cancel{D^{\mathrm{ce}}}}X_{i,\omega}$. Then $F\subseteq X_{i,j}\subsetneq X_{i,\omega}$, so by \Cref{prop:eqce_learnable_iff_no_absorbing} the family $\K_i$ is not $D^{\mathrm{ce}}$-learnable. This contradicts $E^{\mathrm{ce}}\leq_{\mathrm{Learn}}D^{\mathrm{ce}}$.

Suppose now that (b) fails. Then there are infinitely many $i$ such that $X_{i,\omega}\mathrel{\cancel{D^{\mathrm{ce}}}}X_{\omega,\omega}$. Let
\[
\mathcal L:=\{X_{i,\omega}:i\in\omega\}\cup\{X_{\omega,\omega}\}.
\]
Since all members of $\mathcal L$ lie in one $F^{\mathrm{ce}}$-equivalence class, the family $\mathcal L$ is $F^{\mathrm{ce}}$-learnable. But $X_{\omega,\omega}$ is $(\mathcal L,D^{\mathrm{ce}})$-absorbing: if $F\subseteq X_{\omega,\omega}$ is finite, then for all sufficiently large $i$ we have $F\subseteq X_{i,\omega}$, and among those $i$ there are infinitely many with $X_{i,\omega}\mathrel{\cancel{D^{\mathrm{ce}}}}X_{\omega,\omega}$. Again \Cref{prop:eqce_learnable_iff_no_absorbing} gives a contradiction. Hence both (a) and (b) hold.

\medskip
\noindent\emph{Proof of $(ii)\Rightarrow(i)$.}
Assume that $D^{\mathrm{ce}}$ satisfies (a) and (b). 

First, we show that $D^{ce}$ is an almost quotient of $E^{ce}$. Let $W_e$ be any c.e.\ set. If $W_e$ is not in any $\mathcal{E}_j$, $j\in \omega$, then $W_e$ is a singleton equivalence class (modulo $=^{ce}$) in $E^{ce}$ and therefore $\{W_i : i E^{ce} e \text{ and } i \mathrel{\cancel{D^{ce}}} e\} = \emptyset$. If there is a $j \in \omega$ such that $W_e \in \mathcal{E}_j$ then by (a) $\{W_i : i E^{ce} e \text{ and } i \mathrel{\cancel{ D^{ce}}} e\}$ is finite. So, $D^{ce}$ is an almost quotient of $E^{ce}$.

Second, we show that $D^{ce}$ is an almost quotient of $F^{ce}$. Let $W_e$ be any c.e.\ set. If $W_e$ is not in $\mathcal{F}$, then $W_e$ is a singleton equivalence class (modulo $=^{ce}$) in $F^{ce}$ and therefore $\{W_i : i F^{ce} e \text{ and } i \mathrel{\cancel{ D^{ce}}} e\} = \emptyset$. If $W_e$ is in $\mathcal{F}$, then by (b) $\{W_i : i F^{ce} e \text{ and } i \mathrel{\cancel{ D^{ce}}} e\}$ is finite. So, $D^{ce}$ is an almost quotient of $F^{ce}$.

Therefore, by \Cref{almost_quotient_increases_learning}, $E^{ce},F^{ce}\learnreducible{}D^{ce}$. This proves the claim.

\medskip
Now let $D^{\mathrm{ce}}$ be any common upper bound of $E^{\mathrm{ce}}$ and $F^{\mathrm{ce}}$. By the claim, $D^{\mathrm{ce}}$ satisfies (a) and (b). Choose $k\in\omega$ such that for every $i\geq k$ we have $X_{i,\omega}\mathrel{D^{\mathrm{ce}}}X_{\omega,\omega}$. Using (a), choose inductively a strictly increasing sequence
\[
l_k<l_{k+1}<l_{k+2}<\cdots
\]
such that for every $i\geq k$ and every $j\geq l_i$, we have $X_{i,j}\mathrel{D^{\mathrm{ce}}}X_{i,\omega}$. By transitivity, $X_{i,l_i}\mathrel{D^{\mathrm{ce}}}X_{\omega,\omega}$ for all $i\geq k$. Moreover,
\[
X_{i,l_i}\subsetneq X_{i+1,l_{i+1}}\qquad(i\geq k),
\]
each $X_{i,l_i}$ is finite, and $\bigcup_{i\geq k}X_{i,l_i}=X_{\omega,\omega}$.

Let $S:=\{X_{i,l_i}:i\geq k\}$. Define an equivalence relation $\widehat D^{\mathrm{ce}}$ by splitting each set in $S$ off from its old $D^{\mathrm{ce}}$-class:
\[
u\mathrel{\widehat D^{\mathrm{ce}}}v
\iff
\bigl(W_u=W_v=X_{i,l_i}\text{ for some }i\geq k\bigr)
\ \lor\ 
\bigl(u\mathrel{D^{\mathrm{ce}}}v\text{ and }W_u,W_v\notin S\bigr).
\]
This is an equivalence relation, and by construction $\widehat D^{\mathrm{ce}}\subseteq D^{\mathrm{ce}}$. Therefore, by \Cref{lem:quotienting_increases_learning_power},
\[
\widehat D^{\mathrm{ce}}\leq_{\mathrm{Learn}}D^{\mathrm{ce}}.
\]

We claim that $\widehat D^{\mathrm{ce}}$ still satisfies (a) and (b). For each fixed $i\geq k$, we removed only the single set $X_{i,l_i}$ from the $D^{\mathrm{ce}}$-class of $X_{i,\omega}$, so still all but finitely many $j$ satisfy $X_{i,j}\mathrel{\widehat D^{\mathrm{ce}}}X_{i,\omega}$. For $i<k$, condition (a) is unchanged. Condition (b) is unchanged as well, since neither any $X_{i,\omega}$ nor $X_{\omega,\omega}$ was split off. Hence, by the claim, $E^{\mathrm{ce}},F^{\mathrm{ce}}\leq_{\mathrm{Learn}}\widehat D^{\mathrm{ce}}$.

Finally, consider the family
\[
\mathcal K:=\{X_{i,l_i}:i\geq k\}\cup\{X_{\omega,\omega}\}.
\]
Since every $X_{i,l_i}$ is $D^{\mathrm{ce}}$-equivalent to $X_{\omega,\omega}$, the family $\mathcal K$ is $D^{\mathrm{ce}}$-learnable. But $X_{\omega,\omega}$ is $(\mathcal K,\widehat D^{\mathrm{ce}})$-absorbing: if $F\subseteq X_{\omega,\omega}$ is finite, then for some $i\geq k$ we have $F\subseteq X_{i,l_i}\subsetneq X_{\omega,\omega}$, while by construction $X_{i,l_i}\mathrel{\cancel{\widehat D^{\mathrm{ce}}}}X_{\omega,\omega}$. Hence $\mathcal K$ is not $\widehat D^{\mathrm{ce}}$-learnable by \Cref{prop:eqce_learnable_iff_no_absorbing}.

Therefore $\widehat D^{\mathrm{ce}}<_{\mathrm{Learn}}D^{\mathrm{ce}}$. Since $D^{\mathrm{ce}}$ was an arbitrary common upper bound of $E^{\mathrm{ce}}$ and $F^{\mathrm{ce}}$, the set of common upper bounds has no minimal element.
\end{proof}

Thus the failure of joins is not a marginal pathology. Even among common upper bounds there need not be any minimal one.

\subsection{The tolerant learning hierarchy is atomless and coatomless}

To prove atomlessness and coatomlessness, we establish a stronger local statement: one can embed the following order below every non-bottom degree and above every non-top degree:
\[
(\mathcal P_{\mathrm{fin}}(2^\omega),\subseteq).
\]

\begin{lemma}
\label{lem:downward_embed_pfin_cantor}
Let $F^{\mathrm{ce}}$ be an equivalence relation such that
$=^{\mathrm{ce}}<_{\mathrm{Learn}}F^{\mathrm{ce}}$, and let $W_e$
be a c.e.\ set together with a sequence $(W_{u_i})_{i\in\omega}$ of canonical $=^{\mathrm{ce}}$-absorbing witnesses for $W_e$ such that:
\begin{enumerate}
    \item[(a)] for all $i\neq j$, $u_i\nEce{=}u_j$,
    \item[(b)] for every $i$, $u_i\mathrel{F^{\mathrm{ce}}}e$.
\end{enumerate}
Then there is an embedding $\iota$ from $(\mathcal P_{\mathrm{fin}}(\Cantor),\subseteq)$ to $(\mathcal E^{\mathrm{ce}},\leq)$ such that:
\begin{itemize}
    \item $\iota(\emptyset)=[=^{\mathrm{ce}}]_{\equiv_{\mathrm{Learn}}}$,
    \item for every $P\in\mathcal P_{\mathrm{fin}}(\Cantor)$, we have $\iota(P)<[F^{\mathrm{ce}}]_{\equiv_{\mathrm{Learn}}}$.
\end{itemize}
\end{lemma}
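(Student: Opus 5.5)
We use the witness sequence $(W_{u_i})_{i\in\omega}$ to define a family of relations indexed by finite sets of reals. First pass to a sequence along which the witnesses are combinatorially independent. We partition $\omega$ into continuum-many almost disjoint infinite sets: for $x\in\Cantor$, let $A_x:=\{\langle x\restriction n\rangle : n\in\omega\}$, where $\langle\cdot\rangle$ is a fixed coding of $\cantor$ into $\omega$. Distinct reals give almost disjoint infinite sets. For a finite $P\subseteq\Cantor$ put $A_P:=\bigcup_{x\in P}A_x$ and define
\[
u\mathrel{E_P^{\ce}}v\iff u=^{\ce}v\ \lor\ \bigl(W_u,W_v\in\{W_e\}\cup\{W_{u_i}:i\in A_P\}\bigr).
\]
This collapses into the class of $W_e$ exactly those witnesses indexed by $A_P$. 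By (b), each such collapsed pair is already $F^{\ce}$-related, so $E_P^{\ce}\subseteq F^{\ce}$ and hence $E_P^{\ce}\leq_{\Learn}F^{\ce}$ by \Cref{lem:quotienting_increases_learning_power}. Also $E_\emptyset^{\ce}$ is $=^{\ce}$. We set $\iota(P):=[E^{\ce}_P]_{\equiv_{\Learn}}$.

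Monotonicity is immediate: $P\subseteq Q$ gives $E_P^{\ce}\subseteq E_Q^{\ce}$. For the converse, suppose $x\in P\setminus Q$. The family $\K_x:=\{W_e\}\cup\{W_{u_i}:i\in A_x\}$ lies in one $E_P^{\ce}$-class, so it is $E_P^{\ce}$-learnable. Since $Q$ is finite and the $A_y$ are almost disjoint, $A_x\cap A_Q$ is finite. Hence infinitely many $i\in A_x$ have $W_{u_i}$ not $E_Q^{\ce}$-related to $W_e$; here (a) guarantees that these witnesses are distinct sets and not equal to any collapsed set. By \Cref{obs:canonical_witnesses_basic_facts}(b), those witnesses form a canonical absorbing sequence for $W_e$ inside $\K_x$. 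Therefore $W_e$ is $(\K_x,E_Q^{\ce})$-absorbing, and by \Cref{prop:eqce_learnable_iff_no_absorbing} $\K_x$ is not $E_Q^{\ce}$-learnable. So $P\not\subseteq Q$ implies $E_P^{\ce}\not\leq_{\Learn}E_Q^{\ce}$, and $\iota$ is an order embedding.

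The main obstacle is strictness, $\iota(P)<[F^{\ce}]$. We need an $F^{\ce}$-learnable family that is not $E_P^{\ce}$-learnable. The natural candidate is a family built from the witnesses indexed by an infinite set $B$ almost disjoint from every $A_x$ with $x\in P$. Such a $B$ exists, for instance $A_y$ for any $y\notin P$. Take $\K:=\{W_e\}\cup\{W_{u_i}:i\in B\}$. It lies inside a single $F^{\ce}$-class by (b), so it is $F^{\ce}$-learnable. The same absorption argument as above shows it is not $E_P^{\ce}$-learnable. This gives $F^{\ce}\not\leq_{\Learn}E_P^{\ce}$, and hence strictness.

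The delicate point to check throughout is that the collapses defining $E_P^{\ce}$ never merge any of the remaining witnesses into the class of $W_e$. This holds because the collapses are stated purely in terms of the sets $W_{u_i}$ with $i\in A_P$, and (a) makes these sets distinct from the others. Finally, the hypothesis $=^{\ce}<_{\Learn}F^{\ce}$ is only what guarantees, via \Cref{more_learning_power_class_collapsed}, that such a sequence exists. It is not otherwise needed.
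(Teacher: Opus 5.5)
Your proof is correct and is essentially the paper's argument. Your almost disjoint sets $A_x$ of codes of initial segments play the role of the paper's tree labelling $\ell(\sigma)=u_{r(\sigma)}$, and both the embedding (collapse the witnesses along the finitely many branches in $P$ together with $W_e$, then use that a branch $x\notin Q$ meets $A_Q$ only finitely often) and the strictness (use a fresh branch $y\notin P$) are obtained exactly as there. The one detail you omit, which the paper handles explicitly, is that an early witness such as $W_{u_0}$ may be empty. You should therefore first discard finitely many initial witnesses (every $W_{u_n}$ with $n>\min W_e$ is nonempty, since $W_e$ is infinite) so that $\K_x$ and $\K$ are families of nonempty sets, as $\leq_{\Learn}$ requires.
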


\begin{proof}
Since $W_e$ is $=^{\mathrm{ce}}$-absorbing, it is infinite. After discarding finitely many initial terms and reindexing the witness sequence, we may therefore assume that every $W_{u_i}$ is nonempty; by \Cref{obs:canonical_witnesses_basic_facts}, the resulting sequence is still canonical.

Let $T:=2^{<\omega}$ be the full binary tree. Choose an injective map
$r\colon T\to\omega$ such that
\[
\sigma\sqsubset\tau \quad\Longrightarrow\quad r(\sigma)<r(\tau),
\]
and set
\[
\ell(\sigma):=u_{r(\sigma)}.
\]
For every path $\pi\in[T]$, the sequence
$(W_{\ell(\pi\restriction n)})_{n\in\omega}$ is an infinite
subsequence of the original witness sequence and hence, by
\Cref{obs:canonical_witnesses_basic_facts}, is again a sequence of canonical
$=^{\mathrm{ce}}$-absorbing witnesses for $W_e$.

For a finite set $P\subseteq[T]$, let $\K_P$ be the family consisting of $W_e$ together with all sets of the form $W_{\ell(\pi\restriction n)}$ where $\pi\in P$ and $n\in\omega$. Define an equivalence relation $E^{\mathrm{ce}}_P$ by
\[
j\mathrel{E^{\mathrm{ce}}_P}k
\iff
j=^{\mathrm{ce}}k\ \lor\ \bigl(W_j,W_k\in\K_P\bigr).
\]
So $E^{\mathrm{ce}}_P$ is obtained from $=^{\mathrm{ce}}$ by collapsing all indices of members of $\K_P$ into one equivalence class.

We claim that for all finite $P_1,P_2\subseteq[T]$,
\[
P_1\subseteq P_2\quad\Longleftrightarrow\quad E^{\mathrm{ce}}_{P_1}\leq_{\mathrm{Learn}}E^{\mathrm{ce}}_{P_2}.
\]
The forward implication is immediate from inclusion and \Cref{lem:quotienting_increases_learning_power}.

For the converse, suppose $P_1\not\subseteq P_2$, and choose $\pi\in P_1\setminus P_2$. Since $P_2$ is finite and $\pi$ differs from every path in $P_2$, there is some $N$ such that for every $n\geq N$ and every $\rho\in P_2$, the string $\pi\restriction n$ is not an initial segment of $\rho$. Consequently, for every $n\geq N$, the set $W_{\ell(\pi\restriction n)}$ belongs to $\K_{P_1}$ but not to $\K_{P_2}$.

The tail $(W_{\ell(\pi\restriction n)})_{n\geq N}$ is again a canonical witness sequence for $W_e$. Moreover, for each $n\geq N$ we have $W_{\ell(\pi\restriction n)}\mathrel{\cancel{E^{\mathrm{ce}}_{P_2}}}W_e$, since $E^{\mathrm{ce}}_{P_2}$ only collapses members of $\K_{P_2}$ with $W_e$, and by construction $W_{\ell(\pi\restriction n)}\neq W_e$. Therefore the tail witnesses that $W_e$ is $(\K_{P_1},E^{\mathrm{ce}}_{P_2})$-absorbing. By \Cref{prop:eqce_learnable_iff_no_absorbing}, the family $\K_{P_1}$ is not $E^{\mathrm{ce}}_{P_2}$-learnable.

On the other hand, $\K_{P_1}$ is trivially $E^{\mathrm{ce}}_{P_1}$-learnable, since all its members lie in one $E^{\mathrm{ce}}_{P_1}$-class. Hence $E^{\mathrm{ce}}_{P_1}\not\leq_{\mathrm{Learn}}E^{\mathrm{ce}}_{P_2}$, proving the claim.

Finally, note that $=^{\mathrm{ce}}\subseteq E^{\mathrm{ce}}_P\subseteq F^{\mathrm{ce}}$ for every finite $P$. Indeed, $E^{\mathrm{ce}}_P$ extends $=^{\mathrm{ce}}$ by definition, and assumption~(b) ensures that every member of $\K_P$ is $F^{\mathrm{ce}}$-equivalent to $W_e$. Thus $=^{\mathrm{ce}}\leq_{\mathrm{Learn}}E^{\mathrm{ce}}_P\leq_{\mathrm{Learn}}F^{\mathrm{ce}}$.

If $P=\emptyset$, then $\K_P=\{W_e\}$ and $E^{\mathrm{ce}}_P$ coincides with $=^{\mathrm{ce}}$. For any finite $P$, choose a path $\pi\in[T]\setminus P$. Let $\K_\pi$ be the family consisting of $W_e$ together with all $W_{\ell(\pi\restriction n)}$, $n\in\omega$. By assumption~(b), the family $\K_\pi$ is $F^{\mathrm{ce}}$-learnable. Exactly as above, after discarding a finite initial segment of $\pi$ we obtain a canonical witness sequence outside $\K_P$, so $W_e$ is $(\K_\pi,E^{\mathrm{ce}}_P)$-absorbing. Hence $\K_\pi$ is not $E^{\mathrm{ce}}_P$-learnable. Therefore $E^{\mathrm{ce}}_P<_{\mathrm{Learn}}F^{\mathrm{ce}}$.

Define $\iota(P):=[E^{\mathrm{ce}}_P]_{\equiv_{\mathrm{Learn}}}$. By the claim and the final observations, this is the required embedding.
\end{proof}

\begin{theorem}
\label{thm:atomless}
Let $E^{\mathrm{ce}}$ be an equivalence relation. If $=^{\mathrm{ce}}<_{\mathrm{Learn}}E^{\mathrm{ce}}$, then there is an embedding
\[
\iota\colon(\mathcal P_{\mathrm{fin}}(\Cantor),\subseteq)\to(\mathcal E^{\mathrm{ce}},\leq)
\]
such that:
\begin{itemize}
    \item $\iota(\emptyset)=[=^{\mathrm{ce}}]_{\equiv_{\mathrm{Learn}}}$,
    \item for every nonempty $P\in\mathcal P_{\mathrm{fin}}(\Cantor)$, we have $\iota(P)<[E^{\mathrm{ce}}]_{\equiv_{\mathrm{Learn}}}$.
\end{itemize}
In particular, $\TLH$ is atomless.
\end{theorem}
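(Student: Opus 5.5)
The plan is to reduce \Cref{thm:atomless} to \Cref{lem:downward_embed_pfin_cantor}, taking $F^{\mathrm{ce}}:=E^{\mathrm{ce}}$. All the work then consists of producing a set $W_e$ and a canonical sequence of $=^{\mathrm{ce}}$-absorbing witnesses $(W_{u_i})_{i\in\omega}$ that satisfy conditions (a) and (b) of that lemma.

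First, since $=^{\mathrm{ce}}<_{\mathrm{Learn}}E^{\mathrm{ce}}$, \Cref{more_learning_power_class_collapsed} yields a family $\K$ that is not $=^{\mathrm{ce}}$-learnable and all of whose members lie in a single $E^{\mathrm{ce}}$-class. By \Cref{prop:eqce_learnable_iff_no_absorbing}, $\K$ contains a $(\K,=^{\mathrm{ce}})$-absorbing set $W_e$, and we obtain a canonical sequence $(W_{u_n})_{n\in\omega}$ of $(\K,=^{\mathrm{ce}})$-absorbing witnesses for it. Each witness satisfies $W_{u_n}\subsetneq W_e$ and contains $W_e\restriction n$. The same sequence is therefore a canonical sequence of $=^{\mathrm{ce}}$-absorbing witnesses in the sense of the full family of c.e.\ sets, since that notion only drops the membership requirement in $\K$. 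Every $W_{u_n}$ belongs to $\K$, so we have $u_n\mathrel{E^{\mathrm{ce}}}e$, which is condition (b).

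Second, we arrange condition (a). By \Cref{obs:canonical_witnesses_basic_facts}(a), the sequence contains infinitely many pairwise $=^{\mathrm{ce}}$-inequivalent sets. We pass to an infinite subsequence of pairwise distinct sets, choosing each new term to be different from all earlier ones. By \Cref{obs:canonical_witnesses_basic_facts}(b), this subsequence is still canonical, and condition (b) is preserved under passing to subsequences. \Cref{lem:downward_embed_pfin_cantor} now applies and gives the embedding $\iota$ with $\iota(\emptyset)=[=^{\mathrm{ce}}]_{\equiv_{\mathrm{Learn}}}$ and $\iota(P)<[E^{\mathrm{ce}}]_{\equiv_{\mathrm{Learn}}}$ for every finite $P$, in particular for every nonempty $P$.

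Atomlessness then follows directly. Given any degree $[E^{\mathrm{ce}}]>[=^{\mathrm{ce}}]$, pick a singleton $P=\{\pi\}$. Since $\iota$ is an order embedding and $\emptyset\subsetneq P$, we get $[=^{\mathrm{ce}}]=\iota(\emptyset)<\iota(P)<[E^{\mathrm{ce}}]$, so no atom exists. The only step requiring care is the observation that $(\K,=^{\mathrm{ce}})$-witnesses are also witnesses for the full family and that both conditions survive passing to subsequences. Both points are immediate from the definitions, so I expect no real obstacle.
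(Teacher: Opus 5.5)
Your proposal is correct and follows the paper's proof essentially step for step. You take the single-$E^{\mathrm{ce}}$-class, non-$=^{\mathrm{ce}}$-learnable family from \Cref{more_learning_power_class_collapsed}, extract an absorbing set with a canonical witness sequence, thin it to pairwise $=^{\mathrm{ce}}$-inequivalent witnesses via \Cref{obs:canonical_witnesses_basic_facts}, and apply \Cref{lem:downward_embed_pfin_cantor} with $F^{\mathrm{ce}}:=E^{\mathrm{ce}}$. Two of your remarks only spell out what the paper leaves implicit: that $(\K,=^{\mathrm{ce}})$-witnesses are also witnesses relative to all c.e.\ sets, and that a singleton $P$ gives $\iota(\emptyset)<\iota(P)<[E^{\mathrm{ce}}]_{\equiv_{\mathrm{Learn}}}$.
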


\begin{proof}
By \Cref{more_learning_power_class_collapsed}, there is a family $\K$ that is not $=^{\mathrm{ce}}$-learnable and whose members all lie in one $E^{\mathrm{ce}}$-equivalence class. By \Cref{prop:eqce_learnable_iff_no_absorbing}, fix a $(\K,=^{\mathrm{ce}})$-absorbing set $W_e$ and a canonical witness sequence for it. By \Cref{obs:canonical_witnesses_basic_facts}, we may pass to an infinite subsequence of pairwise $=^{\mathrm{ce}}$-inequivalent witnesses; all of them remain $E^{\mathrm{ce}}$-equivalent to $e$. Applying \Cref{lem:downward_embed_pfin_cantor} yields the desired embedding.
\end{proof}

The atomless direction works by embedding \emph{below} a given non-bottom degree. For coatomlessness we need the dual construction: starting from a non-universal degree, we build many distinct refinements that still remain strictly below the top.

\begin{lemma}
\label{lem:upward_embed_pfin_cantor}
Let $E^{\mathrm{ce}}$ and $F^{\mathrm{ce}}$ be equivalence relations, and suppose
that $F^{\mathrm{ce}}$ is universal. Let $\K$ be a family of nonempty c.e.\ sets,
let $W_e\in\K$, and let $(W_{u_i})_{i\in\omega}$ be a sequence of
canonical $(\K,E^{\mathrm{ce}})$-absorbing witnesses for $W_e$ such that
$u_i\nEce{E}u_j$ whenever $i\neq j$. Then there is an embedding $\iota$ from $(\mathcal P_{\mathrm{fin}}(\Cantor),\subseteq)$ to $(\mathcal E^{\mathrm{ce}},\leq)$ such that:
\begin{itemize}
    \item $\iota(\emptyset)=[E^{\mathrm{ce}}]_{\equiv_{\mathrm{Learn}}}$,
    \item for every $P\in\mathcal P_{\mathrm{fin}}(\Cantor)$, we have $\iota(P)<[F^{\mathrm{ce}}]_{\equiv_{\mathrm{Learn}}}$.
\end{itemize}
\end{lemma}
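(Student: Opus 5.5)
The plan is to dualise the proof of \Cref{lem:downward_embed_pfin_cantor}. Instead of collapsing witnesses into a refinement of $=^{\mathrm{ce}}$, we collapse whole $E^{\mathrm{ce}}$-classes of witnesses into the class of $e$, so that the relations constructed lie above $E^{\mathrm{ce}}$.

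\emph{Preparation.} First observe that $W_e$ is infinite. If it were finite, then taking $F:=W_e$ would require a witness with $W_e\subseteq W_i\subsetneq W_e$, which is impossible. A witness for $W_e\restriction n$ contains $W_e\restriction n$. So after discarding finitely many initial terms, which is harmless by \Cref{obs:canonical_witnesses_basic_facts}(b), every $W_{u_i}$ is nonempty.

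Next, as in the downward lemma, fix an injective map $r\colon 2^{<\omega}\to\omega$ such that $\sigma\sqsubset\tau$ implies $r(\sigma)<r(\tau)$, and put $\ell(\sigma):=u_{r(\sigma)}$. Then, for every path $\pi$, the sequence $(W_{\ell(\pi\restriction n)})_n$ is again a canonical $(\K,E^{\mathrm{ce}})$-absorbing witness sequence for $W_e$.

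\emph{Construction.} For finite $P\subseteq\Cantor$, let $L_P:=\{\ell(\pi\restriction n):\pi\in P,\,n\in\omega\}$. Define $E^{\mathrm{ce}}_P$ as the equivalence relation obtained from $E^{\mathrm{ce}}$ by merging into a single class the $E^{\mathrm{ce}}$-class of $e$ together with the $E^{\mathrm{ce}}$-classes of all indices in $L_P$. All other $E^{\mathrm{ce}}$-classes are left unchanged.

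This explicit description of the classes is the point I expect to need the most care, since it is what makes the non-equivalences checkable. Because the $u_i$ are pairwise $E^{\mathrm{ce}}$-inequivalent, $r$ is injective, and $u_i\nEce{E}e$, the following holds: an index $\ell(\sigma)$ with $\sigma$ not an initial segment of any path in $P$ lies in an $E^{\mathrm{ce}}$-class different from the class of $e$ and from the class of every member of $L_P$. Hence $\ell(\sigma)$ is not $E^{\mathrm{ce}}_P$-equivalent to $e$.

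Clearly $E^{\mathrm{ce}}_\emptyset=E^{\mathrm{ce}}$. Moreover, $P_1\subseteq P_2$ implies $E^{\mathrm{ce}}_{P_1}\subseteq E^{\mathrm{ce}}_{P_2}$, so \Cref{lem:quotienting_increases_learning_power} gives the forward direction of the embedding claim. We then set $\iota(P):=[E^{\mathrm{ce}}_P]_{\equiv_{\mathrm{Learn}}}$.

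\emph{Converse.} Suppose $P_1\not\subseteq P_2$ and choose $\pi\in P_1\setminus P_2$. Pick $N$ such that for $n\geq N$ the string $\pi\restriction n$ is not an initial segment of any $\rho\in P_2$. Consider the family $\K_{P_1}:=\{W_e\}\cup\{W_j:j\in L_{P_1}\}$. It consists of nonempty sets lying in a single $E^{\mathrm{ce}}_{P_1}$-class, so it is trivially $E^{\mathrm{ce}}_{P_1}$-learnable.

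On the other hand, the tail $(W_{\ell(\pi\restriction n)})_{n\geq N}$ is a canonical witness sequence contained in $\K_{P_1}$. By the class description above, each of its indices is $E^{\mathrm{ce}}_{P_2}$-inequivalent to $e$. So $W_e$ is $(\K_{P_1},E^{\mathrm{ce}}_{P_2})$-absorbing, and \Cref{prop:eqce_learnable_iff_no_absorbing} gives $E^{\mathrm{ce}}_{P_1}\not\leq_{\mathrm{Learn}}E^{\mathrm{ce}}_{P_2}$.

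\emph{Strictly below $F^{\mathrm{ce}}$.} Since $F^{\mathrm{ce}}$ is universal, $E^{\mathrm{ce}}_P\leq_{\mathrm{Learn}}F^{\mathrm{ce}}$ always holds, and it suffices to show that $E^{\mathrm{ce}}_P$ is not universal. Given finite $P$, choose a path $\pi\notin P$ and let $\K_\pi:=\{W_e\}\cup\{W_{\ell(\pi\restriction n)}:n\in\omega\}$. This is a family of nonempty c.e.\ sets. Exactly as in the previous step, a tail of the witnesses along $\pi$ avoids $L_P$ and is $E^{\mathrm{ce}}_P$-inequivalent to $e$. Hence $W_e$ is $(\K_\pi,E^{\mathrm{ce}}_P)$-absorbing, so $\K_\pi$ is not $E^{\mathrm{ce}}_P$-learnable. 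Therefore $\iota(P)<[F^{\mathrm{ce}}]_{\equiv_{\mathrm{Learn}}}$.
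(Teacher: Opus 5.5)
Your proposal is correct and follows the paper's proof essentially verbatim. It uses the same tree labelling $\ell(\sigma)=u_{r(\sigma)}$, the same relations $E^{\mathrm{ce}}_P$ obtained by merging the $E^{\mathrm{ce}}$-classes of $e$ and of the witnesses along paths in $P$, and the same absorbing-tail argument for both the converse direction and strictness (with universality of $F^{\mathrm{ce}}$ applied to $\K_\pi$). Your preliminary step of discarding initial terms to ensure nonemptiness is harmless but unnecessary, since the witnesses are members of $\K$, which already consists of nonempty sets.
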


\begin{proof}
Let $T:=2^{<\omega}$, and label its nodes exactly as in the proof of \Cref{lem:downward_embed_pfin_cantor}; let $\ell(\sigma)$ denote the label of $\sigma$. For each path $\pi\in[T]$, the sequence $(W_{\ell(\pi\restriction n)})_{n\in\omega}$ is an infinite subsequence of the original witness sequence and is therefore again canonical for $W_e$.

For finite $P\subseteq[T]$, let $\K_P$ be the family consisting of $W_e$ together with all sets of the form $W_{\ell(\pi\restriction n)}$ where $\pi\in P$ and $n\in\omega$. Define $E^{\mathrm{ce}}_P$ by
\[
j\mathrel{E^{\mathrm{ce}}_P}k
\iff
j\mathrel{E^{\mathrm{ce}}}k\ \lor\ \exists h,h'\in\omega\,\bigl(W_h,W_{h'}\in\K_P\land j\mathrel{E^{\mathrm{ce}}}h\land k\mathrel{E^{\mathrm{ce}}}h'\bigr).
\]
Thus $E^{\mathrm{ce}}_P$ is obtained from $E^{\mathrm{ce}}$ by collapsing into one class all $E^{\mathrm{ce}}$-classes of members of $\K_P$.

Exactly as in the proof of \Cref{lem:downward_embed_pfin_cantor}, one checks that for all finite $P_1,P_2\subseteq[T]$,
\[
P_1\subseteq P_2\quad\Longleftrightarrow\quad E^{\mathrm{ce}}_{P_1}\leq_{\mathrm{Learn}}E^{\mathrm{ce}}_{P_2}.
\]
The only extra point in the converse direction is to observe that if $P_1\not\subseteq P_2$ and $\pi\in P_1\setminus P_2$, then for all sufficiently large $n$ the set $W_{\ell(\pi\restriction n)}$ is $E^{\mathrm{ce}}_{P_2}$-inequivalent to $W_e$: it is not in $\K_{P_2}$, it is not $E^{\mathrm{ce}}$-equivalent to any other witness by the pairwise $E^{\mathrm{ce}}$-inequivalence of the witnesses, and since it is an absorbing witness for $W_e$ it is certainly not $E^{\mathrm{ce}}$-equivalent to $W_e$. Hence the corresponding tail witnesses that $W_e$ is $(\K_{P_1},E^{\mathrm{ce}}_{P_2})$-absorbing.

For every finite $P$, the relation $E^{\mathrm{ce}}_P$ is a quotient of $E^{\mathrm{ce}}$, so $E^{\mathrm{ce}}\leq_{\mathrm{Learn}}E^{\mathrm{ce}}_P$. Since $F^{\mathrm{ce}}$ is universal, also $E^{\mathrm{ce}}_P\leq_{\mathrm{Learn}}F^{\mathrm{ce}}$.

To see that the second reduction is strict, fix finite $P\subseteq[T]$, choose a path $\pi\in[T]\setminus P$, and define $\K_\pi$ as before. Because $F^{\mathrm{ce}}$ is universal, the family $\K_\pi$ is $F^{\mathrm{ce}}$-learnable. But after discarding a finite initial segment of $\pi$, the corresponding labels all lie outside $\K_P$ and remain $E^{\mathrm{ce}}_P$-inequivalent to $W_e$, so $W_e$ is $(\K_\pi,E^{\mathrm{ce}}_P)$-absorbing. Therefore $\K_\pi$ is not $E^{\mathrm{ce}}_P$-learnable. Hence $E^{\mathrm{ce}}_P<_{\mathrm{Learn}}F^{\mathrm{ce}}$.

If $P=\emptyset$, then $\K_P=\{W_e\}$ and therefore $E^{\mathrm{ce}}_P=E^{\mathrm{ce}}$. Defining $\iota(P):=[E^{\mathrm{ce}}_P]_{\equiv_{\mathrm{Learn}}}$ gives the required embedding.
\end{proof}

The next two lemmas prepare the coatomless argument by simplifying the shape of a nonlearnable family.

\begin{lemma}
\label{non_learnable_exactly_one_inf_set}
If an equivalence relation $E^{\mathrm{ce}}$ is not universal, then there exists a family $\K$ of nonempty c.e.\ sets such that $\K$ is not $E^{\mathrm{ce}}$-learnable and exactly one member of $\K$ is infinite.
\end{lemma}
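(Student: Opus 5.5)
The plan is to read the family off directly from the failure of the locking property, via \Cref{BC_characterized_by_LP}. Since $E^{\mathrm{ce}}$ is not universal, that theorem says $E^{\mathrm{ce}}$ fails the locking property. So there are a nonempty c.e.\ set $W_e$ and a sequence of indices $(b_i)_{i\in\omega}$ with the following properties: each $W_{b_i}$ is finite, the sets $W_{b_i}$ increase with $i$, their union is $W_e$, and $b_i\nEce{E}e$ for infinitely many $i$.

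Set
\[
\K:=\{W_{b_i}:i\in\omega,\ W_{b_i}\neq\emptyset\}\cup\{W_e\},
\]
exactly as in the left-to-right direction of the proof of \Cref{BC_characterized_by_LP}. The argument given there already shows two things. First, $W_e$ is infinite: if it were finite, then $W_{b_i}=W_e$ for all large $i$, which would give $b_i\mathrel{E^{\mathrm{ce}}}e$ for all large $i$, contradicting the infinitely many failures. Second, $W_e$ is $(\K,E^{\mathrm{ce}})$-absorbing. I would simply recall that reasoning. Given a finite $F\subseteq W_e$, pick $k$ with $F\subseteq W_{b_k}$, and then pick $i\geq k$ with $W_{b_i}\neq\emptyset$ and $b_i\nEce{E}e$. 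Then $F\subseteq W_{b_i}\subsetneq W_e$, the inclusion being strict because $W_{b_i}$ is finite and $W_e$ is infinite. By \Cref{prop:eqce_learnable_iff_no_absorbing}, $\K$ is not $E^{\mathrm{ce}}$-learnable.

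It remains to check the shape of $\K$. Every member is nonempty: the approximations are included only when nonempty, and $W_e$ is infinite. Every member other than $W_e$ is some $W_{b_i}$, which is finite. Hence $W_e$ is the unique infinite member of $\K$.

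No step is really an obstacle, since this is a repackaging of the earlier proof. The only point needing care is that the witness family must consist of nonempty sets (target families are required to be nonempty) while still containing a member equal to $W_e$. Discarding the empty approximations handles this and does not destroy absorption, because the approximations are eventually nonempty.
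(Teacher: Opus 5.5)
Your proposal is correct and follows the paper's own proof essentially verbatim. The paper likewise extracts $W_e$ and $(b_i)_{i\in\omega}$ from the failure of the locking property via \Cref{BC_characterized_by_LP} and takes $\K=\{W_{b_i}:W_{b_i}\neq\emptyset\}\cup\{W_e\}$; it then argues that $W_e$ must be infinite and reuses the absorbing argument, choosing nonempty approximations far enough along, together with \Cref{prop:eqce_learnable_iff_no_absorbing}.
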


\begin{proof}
Since $E^{\mathrm{ce}}$ is not universal, \Cref{BC_characterized_by_LP} implies that $E^{\mathrm{ce}}$ does not have the locking property. Hence there exist an index $e\in\omega$ and a sequence $(b_i)_{i\in\omega}$ such that:
\begin{itemize}
    \item each $W_{b_i}$ is finite,
    \item $W_{b_i}\subseteq W_{b_{i+1}}$ for all $i$,
    \item $\bigcup_i W_{b_i}=W_e$,
    \item for every $k$ there exists $i\geq k$ with $b_i\nEce{E}e$.
\end{itemize}
Let $\K:=\{W_{b_i}:i\in\omega,\ W_{b_i}\neq\emptyset\}\cup\{W_e\}$. Then exactly one member of $\K$ is infinite, namely $W_e$: if $W_e$ were finite, then eventually $W_{b_i}=W_e$, contradicting the last bullet.

The same argument used in the proof of \Cref{BC_characterized_by_LP}, choosing the witnessing approximation sufficiently far along so that it is nonempty, shows that $W_e$ is $(\K,E^{\mathrm{ce}})$-absorbing. Hence, by \Cref{prop:eqce_learnable_iff_no_absorbing}, the family $\K$ is not $E^{\mathrm{ce}}$-learnable.
\end{proof}

\begin{lemma}
\label{non_universal_wlog_has_singletons_witness}
If an equivalence relation $E^{\mathrm{ce}}$ is not universal, then there exist an equivalence relation $F^{\mathrm{ce}}\equiv_{\mathrm{Learn}}E^{\mathrm{ce}}$, a family $\widehat\K$ of nonempty c.e.\ sets, a set $W_e\in\widehat\K$, and a canonical $(\widehat\K,F^{\mathrm{ce}})$-absorbing witness sequence $(W_{u_i})_{i\in\omega}$ for $W_e$ such that the sets $W_e,W_{u_0},W_{u_1},\dots$ are pairwise $F^{\mathrm{ce}}$-inequivalent.
\end{lemma}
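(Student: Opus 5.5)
Start from \Cref{non_learnable_exactly_one_inf_set}. It gives a family $\K$ of nonempty c.e.\ sets that is not $E^{\mathrm{ce}}$-learnable and has exactly one infinite member, say $W_e$. Arguing as in the proof of \Cref{BC_characterized_by_LP}, $W_e$ is $(\K,E^{\mathrm{ce}})$-absorbing. Moreover, there is a canonical witness sequence consisting of finite sets $W_{b_i}$, each with $b_i\nEce{E}e$. Pass to a subsequence of pairwise distinct finite sets, using \Cref{obs:canonical_witnesses_basic_facts}(a) and~(b). For instance, pick the witnesses in strictly increasing order, so that each one properly contains the previous. Call the result $(W_{u_i})_{i\in\omega}$. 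These sets are finite, pairwise distinct and nonempty, and each is $E^{\mathrm{ce}}$-inequivalent to $e$.

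Since the witnesses may still be $E^{\mathrm{ce}}$-equivalent to one another, we modify $E^{\mathrm{ce}}$. Let $F^{\mathrm{ce}}$ be obtained from $E^{\mathrm{ce}}$ by splitting each $W_{u_i}$ off into its own class, exactly as $\widehat D^{\mathrm{ce}}$ was built in the proof of \Cref{thm:not_upper_semilattice}. Precisely, $j\mathrel{F^{\mathrm{ce}}}k$ holds iff either $W_j=W_k=W_{u_i}$ for some $i$, or $j\mathrel{E^{\mathrm{ce}}}k$ and neither $W_j$ nor $W_k$ lies in $S:=\{W_{u_i}:i\in\omega\}$. This is an equivalence relation extending $=^{\mathrm{ce}}$. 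Take $\widehat\K:=S\cup\{W_e\}$. Then $W_e,W_{u_0},W_{u_1},\dots$ are pairwise $F^{\mathrm{ce}}$-inequivalent: the $W_{u_i}$ are distinct singleton classes, and $W_e\notin S$ because $W_e$ is infinite. The sequence $(W_{u_i})_i$ is still canonical and $(\widehat\K,F^{\mathrm{ce}})$-absorbing for $W_e$, since containment and properness are unchanged and $u_i\nEce{F}e$.

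It remains to show $F^{\mathrm{ce}}\equiv_{\mathrm{Learn}}E^{\mathrm{ce}}$. One direction is easy: $F^{\mathrm{ce}}\subseteq E^{\mathrm{ce}}$, so \Cref{lem:quotienting_increases_learning_power} gives $F^{\mathrm{ce}}\leq_{\mathrm{Learn}}E^{\mathrm{ce}}$.

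The converse $E^{\mathrm{ce}}\leq_{\mathrm{Learn}}F^{\mathrm{ce}}$ is the main obstacle. The plan is to check that $F^{\mathrm{ce}}$ is an almost quotient of $E^{\mathrm{ce}}$ and then apply \Cref{almost_quotient_increases_learning}. So we must show that for every $x$ the set $\{W_i:i\mathrel{E^{\mathrm{ce}}}x,\ i\nEce{F}x\}$ is finite. Any such pair with $W_i\neq W_x$ must involve a set of $S$: either $W_x\in S$, in which case the offending sets form the whole $E^{\mathrm{ce}}$-class of $W_x$, or $W_i\in S\cap[x]_{E^{\mathrm{ce}}}$.

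Thus we need every $E^{\mathrm{ce}}$-class to be finite modulo $=^{\mathrm{ce}}$ whenever it meets $S$, and every class to contain only finitely many members of $S$. This need not hold for an arbitrary subsequence. My fallback is to thin the sequence further. By \Cref{obs:canonical_witnesses_basic_facts}(b), any infinite subsequence is still canonical, so we may first assume that either all witnesses lie in one $E^{\mathrm{ce}}$-class or all lie in distinct classes.

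If the almost-quotient route still fails, argue directly via absorbing sets. Suppose $\mathcal L$ is $E^{\mathrm{ce}}$-learnable but not $F^{\mathrm{ce}}$-learnable, and let $(W_{v_n})_n$ be canonical $(\mathcal L,F^{\mathrm{ce}})$-absorbing witnesses for some $W_a\in\mathcal L$. Infinitely many $v_n$ must be $E^{\mathrm{ce}}$-equivalent to $a$, while being $F^{\mathrm{ce}}$-inequivalent to it. This forces infinitely many of those $W_{v_n}$ to lie in $S$, or else $W_a\in S$ is finite. I will try to choose the thinning of $S$ so that each of these configurations yields an $E^{\mathrm{ce}}$-absorbing set in $\mathcal L$, which is a contradiction. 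Doing so possibly requires choosing $S$ as a sparse subset, e.g.\ taking witnesses of rapidly growing size, so that any absorbing sequence inside $S$ is already non-$E^{\mathrm{ce}}$-learnable.
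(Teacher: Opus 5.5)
Your construction of $F^{\mathrm{ce}}$ by splitting off $S$ is the same as the paper's, and so is $\widehat\K:=S\cup\{W_e\}$. The pairwise $F^{\mathrm{ce}}$-inequivalence and the easy direction $F^{\mathrm{ce}}\leq_{\mathrm{Learn}}E^{\mathrm{ce}}$ also match the paper. The gap is the converse $E^{\mathrm{ce}}\leq_{\mathrm{Learn}}F^{\mathrm{ce}}$, which you do not actually prove.

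The almost-quotient route cannot work in general, and no thinning of $S$ rescues it. Take $E^{\mathrm{ce}}=E^{\mathrm{ce}}_0$, which is not universal. The witnesses are finite sets, and the $E^{\mathrm{ce}}_0$-class of any $W_x\in S$ contains all finite sets. Every one of them other than $W_x$ becomes $F^{\mathrm{ce}}$-inequivalent to $x$, so $\{W_i: i\mathrel{E^{\mathrm{ce}}}x,\ i\nEce{F}x\}$ is infinite however sparse $S$ is. Your fallback then stops at a hope (``I will try to choose the thinning\dots''), and sparseness of $S$ is beside the point.

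The missing idea is that your direct argument closes at once, for any $S$ of pairwise distinct finite witnesses. Let $W_a$ be $(\mathcal L,F^{\mathrm{ce}})$-absorbing with canonical witnesses $(W_{v_n})_n$.

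First, an absorbing set is infinite: if $W_a$ were finite, the finite set $W_a$ itself would have no witness. So $W_a\notin S$, and hence for every witness $W_{v_n}\notin S$ we have $v_n\mathrel{E^{\mathrm{ce}}}a\Leftrightarrow v_n\mathrel{F^{\mathrm{ce}}}a$.

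Second, suppose infinitely many $W_{v_n}$ lie in $S$. By \Cref{obs:canonical_witnesses_basic_facts}(b) they form a canonical sequence for $W_a$, and by part~(a) it contains infinitely many distinct members of $S$.
\begin{itemize}
\item Every member of $S$ is a subset of $W_e$, so $W_a\subseteq W_e$.
\item Infinitely many distinct members of $S$ form a canonical subsequence for $W_e$, and all of them are contained in $W_a$, so $W_e\subseteq W_a$.
\end{itemize}
Hence $W_a=W_e$. Then each such $W_{v_n}=W_{u_i}$ is automatically $E^{\mathrm{ce}}$-inequivalent to $a$, because $u_i\nEce{E}e$.

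So the configuration you isolate, infinitely many $v_n$ with $v_n\mathrel{E^{\mathrm{ce}}}a$ but $v_n\nEce{F}a$, cannot occur. In either case an infinite subsequence of $E^{\mathrm{ce}}$-inequivalent witnesses shows that $W_a$ is $(\mathcal L,E^{\mathrm{ce}})$-absorbing, so $\mathcal L$ is not $E^{\mathrm{ce}}$-learnable. This two-case argument is exactly how the paper proves $E^{\mathrm{ce}}\leq_{\mathrm{Learn}}F^{\mathrm{ce}}$: either infinitely many witnesses lie outside $S$, or eventually all lie inside $S$, which forces the target to be $W_e$.
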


\begin{proof}
By \Cref{non_learnable_exactly_one_inf_set}, there is a non-$E^{\mathrm{ce}}$-learnable family with exactly one infinite member. By \Cref{prop:eqce_learnable_iff_no_absorbing}, we may therefore fix an infinite set $W_e$ and a sequence $(W_{u_i})_{i\in\omega}$ of canonical $(\K,E^{\mathrm{ce}})$-absorbing witnesses for $W_e$, all of which are finite.

By \Cref{obs:canonical_witnesses_basic_facts}, the sequence contains infinitely many pairwise distinct sets. Passing to an infinite subsequence if necessary, we may assume that the sets $W_{u_i}$ are pairwise $=^{\mathrm{ce}}$-inequivalent. Let
\[
S:=\{W_{u_i}:i\in\omega\}.
\]
Define an equivalence relation $F^{\mathrm{ce}}$ by
\[
j\mathrel{F^{\mathrm{ce}}}k
\iff
j=^{\mathrm{ce}}k\ \lor\ \bigl(j\mathrel{E^{\mathrm{ce}}}k\text{ and }W_j,W_k\notin S\bigr).
\]
In other words, $F^{\mathrm{ce}}$ is obtained from $E^{\mathrm{ce}}$ by splitting off the $=^{\mathrm{ce}}$-classes of all sets in $S$.

Clearly $F^{\mathrm{ce}}\subseteq E^{\mathrm{ce}}$, so by \Cref{lem:quotienting_increases_learning_power} we have $F^{\mathrm{ce}}\leq_{\mathrm{Learn}}E^{\mathrm{ce}}$. It remains to prove the converse.

We argue by contraposition. Let $\K'$ be a family that is not $F^{\mathrm{ce}}$-learnable. By \Cref{prop:eqce_learnable_iff_no_absorbing}, fix a $(\K',F^{\mathrm{ce}})$-absorbing set $W_f$ together with a canonical witness sequence $(W_{v_i})_{i\in\omega}$ for $W_f$.

There are now two cases.

\smallskip
\noindent\emph{Case 1: infinitely many of the $W_{v_i}$ are not in $S$.} Then pass to an infinite subsequence all of whose members lie outside $S$. Since $W_f$ is infinite and each member of $S$ is finite, we also have $W_f\notin S$. By the definition of $F^{\mathrm{ce}}$, whenever $W_{v_i}\notin S$ and $W_f\notin S$, the implication
\[
v_i\mathrel{E^{\mathrm{ce}}}f\ \Rightarrow\ v_i\mathrel{F^{\mathrm{ce}}}f
\]
holds. Therefore every witness in the subsequence is still $E^{\mathrm{ce}}$-inequivalent to $f$. By \Cref{obs:canonical_witnesses_basic_facts}, the subsequence remains canonical, so $W_f$ is $(\K',E^{\mathrm{ce}})$-absorbing. Hence $\K'$ is not $E^{\mathrm{ce}}$-learnable.

\smallskip
\noindent\emph{Case 2: eventually all $W_{v_i}$ lie in $S$.} Then there is some $j$ such that $W_{v_k}\in S$ for all $k\geq j$. We claim that $W_f=W_e$.

First, $W_f\subseteq W_e$: let $n\in W_f$. For all sufficiently large $k\geq j$, the witness $W_{v_k}$ contains $n$, because the witness sequence is canonical for $W_f$. Since each such $W_{v_k}$ lies in $S$, it is a subset of $W_e$. Hence $n\in W_e$.

Conversely, $W_e\subseteq W_f$: because the tail $(W_{v_k})_{k\geq j}$ contains infinitely many pairwise distinct members of $S$, there is an infinite set of indices $I$ such that $(W_{u_i})_{i\in I}$ occurs along that tail. By \Cref{obs:canonical_witnesses_basic_facts}, this subsequence is still canonical for $W_e$. Since every one of its members appears among the witnesses for $W_f$, they are all subsets of $W_f$. Now fix $n\in W_e$. Some set in this canonical subsequence contains $W_e\restriction(n+1)$, hence contains $n$; therefore $n\in W_f$.

So indeed $W_f=W_e$. Now every witness $W_{v_k}$ with $k\geq j$ belongs to $S$, say $W_{v_k}=W_{u_i}$ for some $i$. Since $(W_{u_i})_{i\in\omega}$ is canonical for $W_e$, we have $u_i\nEce{E}e$. As $W_{v_k}=W_{u_i}$ and $W_f=W_e$, it follows that $v_k\nEce{E}f$. Hence the tail $(W_{v_k})_{k\geq j}$ witnesses that $W_f$ is $(\K',E^{\mathrm{ce}})$-absorbing, and again $\K'$ is not $E^{\mathrm{ce}}$-learnable.

In both cases, every non-$F^{\mathrm{ce}}$-learnable family is non-$E^{\mathrm{ce}}$-learnable. Thus
\[
E^{\mathrm{ce}}\leq_{\mathrm{Learn}}F^{\mathrm{ce}},
\]
and therefore $E^{\mathrm{ce}}\equiv_{\mathrm{Learn}}F^{\mathrm{ce}}$.

Finally, let
\[
\widehat\K:=\{W_e\}\cup\{W_{u_i}:i\in\omega\}.
\]
Since each $W_{u_i}$ is a proper subset of $W_e$ and is $F^{\mathrm{ce}}$-inequivalent to $W_e$, the sequence $(W_{u_i})_{i\in\omega}$ is a canonical $(\widehat\K,F^{\mathrm{ce}})$-absorbing witness sequence for $W_e$. Hence $\widehat\K$ is not $F^{\mathrm{ce}}$-learnable. Moreover, the members of $\widehat\K$ are pairwise $F^{\mathrm{ce}}$-inequivalent: the $W_{u_i}$ have been split into distinct $F^{\mathrm{ce}}$-classes, while $W_e$ is infinite and each $W_{u_i}$ is finite.
\end{proof}

\begin{theorem}
\label{thm:coatomless}
If $E^{\mathrm{ce}}<_{\mathrm{Learn}}\Id_1$ (equivalently, if $E^{\mathrm{ce}}$ is not universal), then there is an embedding
\[
\iota\colon(\mathcal P_{\mathrm{fin}}(\Cantor),\subseteq)\to(\mathcal E^{\mathrm{ce}},\leq)
\]
such that:
\begin{itemize}
    \item $\iota(\emptyset)=[E^{\mathrm{ce}}]_{\equiv_{\mathrm{Learn}}}$,
    \item for every $P\in\mathcal P_{\mathrm{fin}}(\Cantor)$, we have $\iota(P)<[\Id_1]_{\equiv_{\mathrm{Learn}}}$.
\end{itemize}
In particular, $\TLH$ is coatomless.
\end{theorem}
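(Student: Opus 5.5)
The plan is to combine \Cref{non_universal_wlog_has_singletons_witness} with \Cref{lem:upward_embed_pfin_cantor}, taking $\Id_1$ as the universal relation.

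First, the parenthetical equivalence. By \Cref{least_and_greatest}, $\Id_1$ is top and universal. If $E^{\mathrm{ce}}$ were universal, then $\Id_1\leq_{\mathrm{Learn}}E^{\mathrm{ce}}$ trivially. Conversely, if $\Id_1\leq_{\mathrm{Learn}}E^{\mathrm{ce}}$, then every family is $E^{\mathrm{ce}}$-learnable, so $E^{\mathrm{ce}}$ is universal. Hence $E^{\mathrm{ce}}<_{\mathrm{Learn}}\Id_1$ holds exactly when $E^{\mathrm{ce}}$ is not universal.

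Now assume $E^{\mathrm{ce}}$ is not universal. Apply \Cref{non_universal_wlog_has_singletons_witness} to obtain:
\begin{itemize}
\item a relation $F^{\mathrm{ce}}\equiv_{\mathrm{Learn}}E^{\mathrm{ce}}$;
\item a family $\widehat\K$ of nonempty c.e.\ sets with a member $W_e$;
\item a canonical $(\widehat\K,F^{\mathrm{ce}})$-absorbing witness sequence $(W_{u_i})_{i\in\omega}$ for $W_e$ such that $W_e,W_{u_0},W_{u_1},\dots$ are pairwise $F^{\mathrm{ce}}$-inequivalent.
\end{itemize}
In particular $u_i\nEce{F}u_j$ for $i\neq j$. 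These are exactly the hypotheses of \Cref{lem:upward_embed_pfin_cantor}, with $F^{\mathrm{ce}}$ in place of $E^{\mathrm{ce}}$ and $\Id_1$ in place of the universal relation. The lemma yields an embedding $\iota$ with $\iota(\emptyset)=[F^{\mathrm{ce}}]_{\equiv_{\mathrm{Learn}}}=[E^{\mathrm{ce}}]_{\equiv_{\mathrm{Learn}}}$ and $\iota(P)<[\Id_1]_{\equiv_{\mathrm{Learn}}}$ for every finite $P$. Since the learn-degree is what matters, replacing $E^{\mathrm{ce}}$ by $F^{\mathrm{ce}}$ is harmless.

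For coatomlessness, let $a<[\Id_1]$ be represented by a non-universal $E^{\mathrm{ce}}$. Take any nonempty finite $P\subseteq\Cantor$, for instance a singleton. Because $\iota$ is an order embedding, $\emptyset\subsetneq P$ gives $a=\iota(\emptyset)<\iota(P)<[\Id_1]$. So $a$ is not a coatom, and since $a$ was an arbitrary element strictly below the top, $\TLH$ has no coatoms.

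The main obstacle is checking that the hypotheses of \Cref{lem:upward_embed_pfin_cantor} are met exactly. The required pairwise inequivalence of the $u_i$ is supplied by \Cref{non_universal_wlog_has_singletons_witness}. Strictness of $\iota(\emptyset)<\iota(P)$ comes from the embedding property $P_1\subseteq P_2\iff\iota(P_1)\leq\iota(P_2)$, which that lemma already provides.
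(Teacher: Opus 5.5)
Your proposal is correct and matches the paper's proof. It applies \Cref{non_universal_wlog_has_singletons_witness} to replace $E^{\mathrm{ce}}$ by a learn-equivalent $F^{\mathrm{ce}}$ carrying a pairwise $F^{\mathrm{ce}}$-inequivalent canonical witness sequence, then invokes \Cref{lem:upward_embed_pfin_cantor} with $\Id_1$ as the universal relation. Your checks of the parenthetical equivalence and of coatomlessness (order-reflection of $\iota$ applied to $\emptyset\subsetneq P$) are also correct, and they fill in steps the paper leaves implicit.
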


\begin{proof}
By \Cref{non_universal_wlog_has_singletons_witness}, there exists an equivalence relation $F^{\mathrm{ce}}\equiv_{\mathrm{Learn}}E^{\mathrm{ce}}$ together with a family $\widehat\K$, a c.e.\ set $W_e\in\widehat\K$, and a canonical $(\widehat\K,F^{\mathrm{ce}})$-absorbing witness sequence $(W_{u_i})_{i\in\omega}$ for $W_e$ such that $W_e,W_{u_0},W_{u_1},\dots$ are pairwise $F^{\mathrm{ce}}$-inequivalent. Since $E^{\mathrm{ce}}$ is not universal, neither is $F^{\mathrm{ce}}$. Applying \Cref{lem:upward_embed_pfin_cantor} to $F^{\mathrm{ce}}$ and $\Id_1$ yields an embedding with $\iota(\emptyset)=[F^{\mathrm{ce}}]_{\equiv_{\mathrm{Learn}}}$ and $\iota(P)<[\Id_1]_{\equiv_{\mathrm{Learn}}}$ for all finite $P$. Since $[F^{\mathrm{ce}}]_{\equiv_{\mathrm{Learn}}}=[E^{\mathrm{ce}}]_{\equiv_{\mathrm{Learn}}}$, the statement follows.
\end{proof}

For readers' convenience, we summarise the principal structural consequences of this section in the next theorem.

\begin{theorem}
\label{thm:mainstructuraltheorem}
The tolerant learning hierarchy $\TLH$:
\begin{itemize}
    \item is a bounded lower semilattice but not an upper semilattice, with bottom $[=^{\mathrm{ce}}]_{\equiv_{\mathrm{Learn}}}$ and top $[\Id_1]_{\equiv_{\mathrm{Learn}}}$;
    \item is atomless and coatomless;
    \item has width and height $2^{\aleph_0}$.
\end{itemize}
\end{theorem}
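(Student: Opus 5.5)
This theorem only collects results already proved in the section, so the proof is a matter of citing each clause to its source. For the first item, boundedness with bottom $[=^{\mathrm{ce}}]_{\equiv_{\mathrm{Learn}}}$ and top $[\Id_1]_{\equiv_{\mathrm{Learn}}}$ is \Cref{least_and_greatest}. The lower semilattice property, with meets given by intersections, is \Cref{lower_semilattice}, which follows from \Cref{proposition:meetintersection}. The failure of the upper semilattice property is \Cref{thm:not_upper_semilattice}. Indeed, if joins always existed, the join of $[E^{\mathrm{ce}}]_{\equiv_{\mathrm{Learn}}}$ and $[F^{\mathrm{ce}}]_{\equiv_{\mathrm{Learn}}}$ would be a least, hence minimal, common upper bound; that theorem exhibits a pair with no minimal common upper bound.

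For the second item, atomlessness is the final clause of \Cref{thm:atomless}. Suppose $a$ were an atom, represented by some $E^{\mathrm{ce}}$ with $=^{\mathrm{ce}}<_{\mathrm{Learn}}E^{\mathrm{ce}}$. Pick any nonempty finite $P\subseteq\Cantor$. The embedding $\iota$ satisfies $\iota(\emptyset)<\iota(P)<a$, where the first inequality is strict because $\iota$ is an order embedding and $\emptyset\subsetneq P$. This contradicts $a$ being an atom. Coatomlessness is the dual argument via \Cref{thm:coatomless}. If $[E^{\mathrm{ce}}]_{\equiv_{\mathrm{Learn}}}<[\Id_1]_{\equiv_{\mathrm{Learn}}}$, we take a nonempty $P$ and obtain $[E^{\mathrm{ce}}]_{\equiv_{\mathrm{Learn}}}=\iota(\emptyset)<\iota(P)<[\Id_1]_{\equiv_{\mathrm{Learn}}}$. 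Here we use that $E^{\mathrm{ce}}<_{\mathrm{Learn}}\Id_1$ is equivalent to non-universality, which holds because $\Id_1$ is universal and top.

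For the third item, \Cref{uncountable_chains_antichains} gives width and height exactly $2^{\aleph_0}$. It embeds $(\mathcal P(\omega),\subseteq)$, which contains both chains and antichains of size continuum, for instance Dedekind cuts of rationals and an almost disjoint family. The upper bound comes from the fact that there are only $2^{\aleph_0}$ equivalence relations on $\omega$.

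There is no real obstacle. The only points needing care are that non-existence of a minimal upper bound rules out joins, and that the embeddings of \Cref{thm:atomless} and \Cref{thm:coatomless} are strict at nonempty $P$.
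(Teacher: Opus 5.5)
Your proposal is correct and matches the paper. The paper gives this theorem no separate proof: it is stated as a summary of \Cref{least_and_greatest}, \Cref{lower_semilattice}, \Cref{thm:not_upper_semilattice}, \Cref{thm:atomless}, \Cref{thm:coatomless} and \Cref{uncountable_chains_antichains}, and your added remarks (a join would be a minimal common upper bound, $\iota(\emptyset)<\iota(P)$ for nonempty $P$, and explicit continuum-size chains and antichains in $\mathcal P(\omega)$) only spell out details the paper leaves implicit.
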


The preceding results show that $\TLH$ is both atomless and coatomless. The following natural strengthening remains open.

\begin{question}
Is $\TLH$ dense?
\end{question}

\section{Benchmark relations and their learning power}
\label{sec:comparison}

In this section, we calibrate $\TLH$ against a collection of canonical equivalence relations.  The point of the calibration is not merely to add examples: these relations are c.e.-index analogues of benchmark relations that organise the descriptive-set-theoretic study of classification problems (see, e.g., \cite{gao2008invariant}).  In particular, $E^{\ce}_0$ is the finite-error analogue of eventual agreement, while $E^{\ce}_1$, $E^{\ce}_3$, and $E^{\ce}_{\mathrm{set}}$ are obtained by applying the familiar column constructions to c.e.\ subsets of $\omega\times\omega$.  The relations $E^{\ce}_{\min}$ and $E^{\ce}_{\max}$ provide simple one-dimensional invariants that are useful tests for how learn-reducibility treats coarse semantic information.

The following equivalence relations are defined on $\omega$ and extend $=^{\ce}$.  Whenever we view c.e.\ sets as subsets of $\omega\times\omega$, we use a fixed computable pairing function.  For $W_e\subseteq\omega\times\omega$ and $k\in\omega$, the \define{$k$-th column of $W_e$} is
\[
W_e^{[k]}:=\{i\in\omega:\langle k,i\rangle\in W_e\}.
\]
We consider the following benchmark relations:
\begin{itemize}
    \item $e\mathrel{E^{\mathrm{ce}}_{\min}}e'\iff\bigl(W_e=W_{e'}=\emptyset\bigr)\lor\bigl(W_e,W_{e'}\neq\emptyset\text{ and }\min W_e=\min W_{e'}\bigr)$,
    \item $e\mathrel{E^{\mathrm{ce}}_{\max}}e'\iff\bigl(W_e=W_{e'}=\emptyset\bigr)\lor\bigl(W_e\text{ and }W_{e'}\text{ are both infinite}\bigr)\lor\bigl(W_e,W_{e'}\text{ are finite nonempty and }\max W_e=\max W_{e'}\bigr)$,
    \item $e\mathrel{E^{\mathrm{ce}}_1}e'\iff(\forall^{\infty}k)\,\bigl(W_e^{[k]}=W_{e'}^{[k]}\bigr)$,
    \item $e\mathrel{E^{\mathrm{ce}}_3}e'\iff(\forall k)\,\bigl|W_e^{[k]}\mathbin{\triangle}W_{e'}^{[k]}\bigr|<\infty$,
    \item $e\mathrel{E^{\mathrm{ce}}_{\mathrm{set}}}e'\iff\{W_e^{[k]}:k\in\omega\}=\{W_{e'}^{[k]}:k\in\omega\}$.
\end{itemize}

Thus $E^{\ce}_1$ identifies sets whose columns eventually agree exactly, $E^{\ce}_3$ identifies sets whose corresponding columns agree modulo finite error, and $E^{\ce}_{\mathrm{set}}$ forgets both the order and multiplicity of columns.  These are deliberately close to the standard $E_1$, $E_3$, and Friedman--Stanley set constructions on Baire space \cite{gao2008invariant}; the results below show that their learning-theoretic behaviour is nevertheless quite different from their behaviour under the usual uniform reducibilities.

We already know that $\Id_1$ is universal and $=^{\ce}$ is low. More generally, any equivalence relation whose classes contain only finitely many $=^{\mathrm{ce}}$-classes is low: otherwise, by \Cref{more_learning_power_class_collapsed}, a single such class would contain a non-$=^{\mathrm{ce}}$-learnable family, which is impossible for a finite family.

Our first benchmark is $E^{\mathrm{ce}}_0$, namely equality modulo finite symmetric difference. This is the canonical relation associated with anomalous learning, so it is the most basic test case for how much one gains by allowing finitely many final errors.

\begin{proposition}
$=^{\mathrm{ce}}<_{\mathrm{Learn}}E^{\mathrm{ce}}_0$.
\end{proposition}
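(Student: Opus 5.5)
The inequality $=^{\mathrm{ce}}\leq_{\mathrm{Learn}}E^{\mathrm{ce}}_0$ is immediate from \Cref{lem:quotienting_increases_learning_power}, since $=^{\mathrm{ce}}\subseteq E^{\mathrm{ce}}_0$. So the plan is to exhibit a family that is $E^{\mathrm{ce}}_0$-learnable but not $=^{\mathrm{ce}}$-learnable. Following the template of \Cref{more_learning_power_class_collapsed}, it is natural to look for such a family inside a single $E^{\mathrm{ce}}_0$-class. Such a family is trivially $E^{\mathrm{ce}}_0$-learnable: a learner that always outputs a fixed index of one member converges to an index $E^{\mathrm{ce}}_0$-equivalent to every target.

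The concrete choice is
\[
\K:=\{\omega\setminus\{0,\dots,n-1\}\cdot 0 : n\}\ldots
\]
More carefully, take the cofinite sets
\[
A_n:=\omega\setminus\{n\}\quad(n\in\omega)\qquad\text{together with}\qquad\omega,
\]
and set $\K:=\{A_n:n\in\omega\}\cup\{\omega\}$. Every member is cofinite, hence all members are pairwise $E^{\mathrm{ce}}_0$-equivalent, and so $\K$ is $E^{\mathrm{ce}}_0$-learnable. To show $\K$ is not $=^{\mathrm{ce}}$-learnable, I will invoke \Cref{prop:eqce_learnable_iff_no_absorbing} and check that $\omega$ is $(\K,=^{\mathrm{ce}})$-absorbing. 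Given a finite $F\subseteq\omega$, pick $n>\max F$ (or $n=0$ if $F=\emptyset$). Then $F\subseteq A_n\subsetneq\omega$ and $A_n\neq\omega$, so $A_n$ is an absorbing witness for $F$ on $\omega$.

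Nothing here is a serious obstacle. The only point to watch is that all members of $\K$ are nonempty c.e. sets, which is clear since they are computable cofinite sets. Putting the two halves together gives $E^{\mathrm{ce}}_0\not\leq_{\mathrm{Learn}}=^{\mathrm{ce}}$, and hence the strict inequality $=^{\mathrm{ce}}<_{\mathrm{Learn}}E^{\mathrm{ce}}_0$.
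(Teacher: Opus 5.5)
Your proof is correct and is essentially the paper's argument. The paper takes $\K$ to be the family of all cofinite sets and observes that it lies in a single $E^{\mathrm{ce}}_0$-class while $\omega$ is $(\K,=^{\mathrm{ce}})$-absorbing; your family $\{\omega\setminus\{n\}:n\in\omega\}\cup\{\omega\}$ is a subfamily on which the same absorbing check goes through. You should delete the garbled first display for $\K$, the one just before ``More carefully'', since it is a leftover fragment.
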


\begin{proof}
Since $=^{\mathrm{ce}}$ is the bottom element of $\TLH$, we already have $=^{\mathrm{ce}}\leq_{\mathrm{Learn}}E^{\mathrm{ce}}_0$. To prove strictness, it remains to show
\[
E^{\mathrm{ce}}_0\not\leq_{\mathrm{Learn}}=^{\mathrm{ce}}.
\]
Let
\[
\K:=\{W_e:W_e\text{ is cofinite}\}.
\]
All members of $\K$ lie in one $E^{\mathrm{ce}}_0$-equivalence class, so $\K$ is trivially $E^{\mathrm{ce}}_0$-learnable. On the other hand, $\omega$ is $(\K,=^{\mathrm{ce}})$-absorbing, so by \Cref{prop:eqce_learnable_iff_no_absorbing} the family $\K$ is not $=^{\mathrm{ce}}$-learnable.
\end{proof}

We already noted in the introduction that $E^{\mathrm{ce}}_0$ is not universal. In fact, it is weak in a stronger sense.

\begin{definition}
An equivalence relation $E^{\mathrm{ce}}$ is \define{weak} if every
equivalence relation $F^{\mathrm{ce}}\supseteq E^{\mathrm{ce}}$ that
is universal is equal to $\Id_1$.
\end{definition}

In other words, weakness says that there is no nontrivial way of quotienting $E^{\mathrm{ce}}$ into a universal relation.

\begin{theorem}
$E^{\mathrm{ce}}_0$ is weak.
\end{theorem}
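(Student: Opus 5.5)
Let $F^{\ce}\supseteq E^{\ce}_0$ be universal; we must show $F^{\ce}=\Id_1$, i.e.\ that any two indices are $F^{\ce}$-equivalent. The plan is to use the locking property from \Cref{BC_characterized_by_LP}, which $F^{\ce}$ has because it is universal. Two facts combine. First, every finite set is $E^{\ce}_0$-equivalent (hence $F^{\ce}$-equivalent) to $\emptyset$, so all finite sets, including $\emptyset$, lie in one $F^{\ce}$-class; call it $C$. Second, for any infinite c.e.\ set $W_e$, take a computable sequence of finite sets $W_{b_i}$ increasing to $W_e$, for instance the stage-$i$ approximations. The locking property gives $b_i\mathrel{F^{\ce}}e$ for all large $i$. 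Since $W_{b_i}$ is finite, $b_i\in C$, and therefore $e\in C$. For nonempty finite sets this is immediate, and for the empty set it holds by $E^{\ce}_0$. So every index lies in $C$, and $F^{\ce}=\Id_1$.

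On reflection, this argument never uses $E^{\ce}_0$ beyond the fact that it collapses all finite sets. So the theorem is really the observation that a universal relation identifying all finite sets with one another must be trivial. The only point to check is that the locking property does apply to every nonempty $W_e$, and that the empty set is handled separately, which $E^{\ce}_0$ takes care of, since $\emptyset\mathrel{E^{\ce}_0}\{0\}$.

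I expect essentially no obstacle here. The one subtlety is that the definition of the locking property quantifies over nonempty $W_e$ with arbitrary increasing finite approximations, and such approximations exist for every c.e.\ set. An equally direct route would apply the contrapositive of the first corollary after \Cref{BC_characterized_by_LP}, but the direct locking argument above is cleaner.
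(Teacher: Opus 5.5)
Your argument is correct. It is essentially the paper's idea, routed through a different lemma.

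The paper does not invoke the locking property. It assumes an infinite $W_e$ lies outside the class $A$ of all finite sets and considers $\K_e:=\{W_i: W_i\text{ finite and nonempty}\}\cup\{W_e\}$. It then shows directly that $W_e$ is $(\K_e,F^{\ce})$-absorbing, which contradicts universality by \Cref{prop:eqce_learnable_iff_no_absorbing}.

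You instead use the left-to-right direction of \Cref{BC_characterized_by_LP}. Its proof runs exactly this absorbing argument, on the family of finite approximations of $W_e$. So your version is shorter because it reuses that theorem, while the paper's depends only on the tell-tale/absorbing characterisation. Your remark that only the collapse of all finite sets into one class is used applies equally to the paper's proof.

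One small correction to your closing aside. The contrapositive of the first corollary after \Cref{BC_characterized_by_LP} only says that a universal relation has some class containing infinitely many pairwise distinct sets. That is far from $F^{\ce}=\Id_1$. What you could reuse is that corollary's \emph{proof} (locking along initial segments), applied to an arbitrary infinite $W_e$ --- which is precisely your main argument.
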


\begin{proof}
Let $F^{\mathrm{ce}}\supseteq E^{\mathrm{ce}}_0$, and suppose that $F^{\mathrm{ce}}$ is universal. We must show that $F^{\mathrm{ce}}=\Id_1$.

All indices of finite sets lie in one $E^{\mathrm{ce}}_0$-equivalence class, hence also in one $F^{\mathrm{ce}}$-equivalence class. Let $A$ denote that class. We claim that every index belongs to $A$.

Let $e\in\omega$. If $W_e$ is finite, then of course $e\in A$. Suppose instead that $W_e$ is infinite and $e\notin A$. Let
\[
\K_e:=\{W_i:W_i\text{ is finite and nonempty}\}\cup\{W_e\}.
\]
Since $F^{\mathrm{ce}}$ is universal, the family $\K_e$ is $F^{\mathrm{ce}}$-learnable.

But $W_e$ is $(\K_e,F^{\mathrm{ce}})$-absorbing. Indeed, given a finite $F\subseteq W_e$, choose a finite nonempty set $W_i$ with $F\subseteq W_i\subsetneq W_e$. Since $W_i$ is finite, we have $i\in A$, while $e\notin A$ by assumption, so $i\nEce{F}e$. Thus every finite subset of $W_e$ extends to a wrong proper subtarget. By \Cref{prop:eqce_learnable_iff_no_absorbing}, the family $\K_e$ is not $F^{\mathrm{ce}}$-learnable, contradiction.

Therefore every index lies in $A$, so $F^{\mathrm{ce}}$ has exactly one equivalence class, i.e., $F^{\mathrm{ce}}=\Id_1$.
\end{proof}

So finite-error tolerance does not bring one robustly close to universality: reaching the top requires a total collapse.

We now compare $E^{\mathrm{ce}}_0$ with the other benchmark relations defined above.

\begin{proposition}
\label{prop:E0belowE1}
We have
\[
E^{\mathrm{ce}}_0\leq_{\mathrm{Learn}}E^{\mathrm{ce}}_1,E^{\mathrm{ce}}_3,E^{\mathrm{ce}}_{\max},
\qquad\text{and}\qquad
E^{\mathrm{ce}}_0\not\leq_{\mathrm{Learn}}E^{\mathrm{ce}}_{\mathrm{set}}.
\]
\end{proposition}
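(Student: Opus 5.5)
The inequalities $E^{\mathrm{ce}}_0\leq_{\mathrm{Learn}}E^{\mathrm{ce}}_1$ and $E^{\mathrm{ce}}_0\leq_{\mathrm{Learn}}E^{\mathrm{ce}}_3$ should follow directly from containment and \Cref{lem:quotienting_increases_learning_power}. Suppose $|W_e\mathbin{\triangle}W_{e'}|<\infty$. The finitely many points of the symmetric difference lie in finitely many columns, so $W_e^{[k]}=W_{e'}^{[k]}$ for almost all $k$; hence $E^{\mathrm{ce}}_0\subseteq E^{\mathrm{ce}}_1$. Each individual column difference $W_e^{[k]}\mathbin{\triangle}W_{e'}^{[k]}$ is contained in the finite set $W_e\mathbin{\triangle}W_{e'}$ (via the pairing), so $E^{\mathrm{ce}}_0\subseteq E^{\mathrm{ce}}_3$.

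For $E^{\mathrm{ce}}_{\max}$ containment fails, since $\{0\}$ and $\{1\}$ are $E^{\mathrm{ce}}_0$-equivalent but have different maxima. Here I will argue via absorbing sets. Let $\K$ be a family that is not $E^{\mathrm{ce}}_{\max}$-learnable. By \Cref{prop:eqce_learnable_iff_no_absorbing}, fix a $(\K,E^{\mathrm{ce}}_{\max})$-absorbing $W_e$ with a canonical witness sequence $(W_{u_n})_{n\in\omega}$.

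First, $W_e$ is infinite. Otherwise, for $n$ large we have $W_e\restriction n=W_e\subseteq W_{u_n}\subsetneq W_e$, which is impossible. Each $W_{u_n}$ is nonempty (after discarding finitely many terms) and is $E^{\mathrm{ce}}_{\max}$-inequivalent to the infinite set $W_e$. By the definition of $E^{\mathrm{ce}}_{\max}$, each $W_{u_n}$ must therefore be finite. A finite set has infinite symmetric difference with the infinite set $W_e$, so $u_n\nEce{E_0}e$ for all $n$. Thus the same sequence witnesses that $W_e$ is $(\K,E^{\mathrm{ce}}_0)$-absorbing, and $\K$ is not $E^{\mathrm{ce}}_0$-learnable. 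This proves $E^{\mathrm{ce}}_0\leq_{\mathrm{Learn}}E^{\mathrm{ce}}_{\max}$.

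For $E^{\mathrm{ce}}_0\not\leq_{\mathrm{Learn}}E^{\mathrm{ce}}_{\mathrm{set}}$, I will exhibit a family lying in a single $E^{\mathrm{ce}}_0$-class that is not $E^{\mathrm{ce}}_{\mathrm{set}}$-learnable. Let $A:=\omega\times\omega$ (coded via pairing) and $A_n:=A\setminus\{\langle 0,n\rangle\}$, and set $\K:=\{A\}\cup\{A_n:n\in\omega\}$. All members differ from $A$ by at most one point, so $\K$ lies in one $E^{\mathrm{ce}}_0$-class and is trivially $E^{\mathrm{ce}}_0$-learnable.

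On the other hand, $\{A^{[k]}:k\in\omega\}=\{\omega\}$, while $\{A_n^{[k]}:k\in\omega\}=\{\omega,\omega\setminus\{n\}\}$. Hence each $A_n$ is $E^{\mathrm{ce}}_{\mathrm{set}}$-inequivalent to $A$. Given a finite $F\subseteq A$, choose $n$ with $\langle 0,n\rangle\notin F$; then $F\subseteq A_n\subsetneq A$. So $A$ is $(\K,E^{\mathrm{ce}}_{\mathrm{set}})$-absorbing, and \Cref{prop:eqce_learnable_iff_no_absorbing} shows that $\K$ is not $E^{\mathrm{ce}}_{\mathrm{set}}$-learnable.

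The only step needing any care is the $E^{\mathrm{ce}}_{\max}$ case. There one must observe that absorbing forces the target to be infinite and the witnesses to be finite; everything else is routine.
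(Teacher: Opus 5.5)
Your proof is correct. For $E^{\mathrm{ce}}_1$ and $E^{\mathrm{ce}}_3$ you argue exactly as the paper does (containment plus monotonicity). Your $E^{\mathrm{ce}}_{\mathrm{set}}$ separation uses the paper's idea with a different family. The paper deletes single points from $\{\langle k,0\rangle:k\in\omega\}$, which creates an empty column; you delete points from column $0$ of $\omega\times\omega$, which creates a cofinite column. Both work.

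The genuine difference is the $E^{\mathrm{ce}}_{\max}$ case. The paper argues directly: it turns an $E^{\mathrm{ce}}_0$-learner $\learnerM$ into a learner $\learnerM'$ that copies $\learnerM$ while $W_{\learnerM(\sigma)}$ is infinite, and otherwise conjectures $\{\max\operatorname{range}(\sigma)\}$. You argue contrapositively through the absorbing-set characterisation. An $E^{\mathrm{ce}}_{\max}$-absorbing set is infinite and its witnesses are finite, so they are automatically $E^{\mathrm{ce}}_0$-inequivalent to it.

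Your route is shorter and more general. The same argument gives $F^{\mathrm{ce}}\leq_{\mathrm{Learn}}E^{\mathrm{ce}}_{\max}$ for every $F^{\mathrm{ce}}$ that never identifies a finite set with an infinite one. The paper's route instead produces a uniform transformation of learners, computable from $\learnerM$ together with an oracle for the $\Pi^0_2$ question whether $W_i$ is infinite. That is what motivates the paper's later remark and open question about computable learners. Your argument passes through the non-constructive tell-tale theorem, so it says nothing about the complexity of the resulting $E^{\mathrm{ce}}_{\max}$-learner.
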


\begin{proof}
Since $E^{\mathrm{ce}}_0\subseteq E^{\mathrm{ce}}_1$ and $E^{\mathrm{ce}}_0\subseteq E^{\mathrm{ce}}_3$, the first two reductions follow immediately from \Cref{lem:quotienting_increases_learning_power}.

For $E^{\mathrm{ce}}_{\max}$, let $\K$ be a family that is $E^{\mathrm{ce}}_0$-learnable via a learner $\learnerM$. Fix a computable function $q\colon\omega\to\omega$ such that $W_{q(m)}=\{m\}$ for every $m\in\omega$. Define a new learner $\learnerM'$ on a finite string $\sigma$ by
\[
\learnerM'(\sigma)=
\begin{cases}
\learnerM(\sigma),
  &\text{if }W_{\learnerM(\sigma)}\text{ is infinite},\\
q(\max(\operatorname{range}(\sigma))),
  &\text{if }W_{\learnerM(\sigma)}\text{ is finite and }\operatorname{range}(\sigma)\neq\emptyset,\\
q(0),&\text{otherwise.}
\end{cases}
\]

If $\mu$ enumerates an infinite set $W_e\in\K$, then $\learnerM$ eventually stabilises to an index of an infinite set, since no finite set is $E^{\mathrm{ce}}_0$-equivalent to $W_e$. From that point on, $\learnerM'$ copies $\learnerM$ and hence converges to an $E^{\mathrm{ce}}_{\max}$-correct hypothesis.

If $\mu$ enumerates a finite set $W_e\in\K$, then the limiting hypothesis of $\learnerM$ indexes a finite set. Hence, after a stage at which both $\learnerM$ has stabilised and the maximum $m$ of $W_e$ has appeared, $\learnerM'$ constantly outputs $q(m)$, which is $E^{\mathrm{ce}}_{\max}$-equivalent to $W_e$. Therefore $\K$ is $E^{\mathrm{ce}}_{\max}$-learnable.

Finally, to show that $E^{\mathrm{ce}}_0\not\leq_{\mathrm{Learn}}E^{\mathrm{ce}}_{\mathrm{set}}$, let
\[
D_i:=\{\langle k,0\rangle:k\neq i\}\quad(i\in\omega),
\qquad
D_\omega:=\{\langle k,0\rangle:k\in\omega\},
\]
and set $\K_D:=\{D_i:i\in\omega\}\cup\{D_\omega\}$. This family is $E^{\mathrm{ce}}_0$-learnable, since any two of its members differ only finitely.

On the other hand, $D_\omega$ is $(\K_D,E^{\mathrm{ce}}_{\mathrm{set}})$-absorbing. Given a finite set $F\subseteq D_\omega$, choose $i$ with $F\subseteq D_i\subsetneq D_\omega$. Then
\[
\{D_i^{[k]}:k\in\omega\}=\{\emptyset,\{0\}\},
\qquad
\{D_\omega^{[k]}:k\in\omega\}=\{\{0\}\},
\]
so $D_i\mathrel{\cancel{E^{\mathrm{ce}}_{\mathrm{set}}}}D_\omega$. Hence $\K_D$ is not $E^{\mathrm{ce}}_{\mathrm{set}}$-learnable.
\end{proof}

The proof of $E^{\mathrm{ce}}_0\leq_{\mathrm{Learn}}E^{\mathrm{ce}}_{\max}$ is non-effective, since it tests whether a hypothesised c.e.\ set is infinite---a $\Pi^0_2$-complete predicate, decidable relative to $0^{\mathord{\prime\prime}}$. This raises the question whether the implication
\[
E^{\mathrm{ce}}_0\leq_{\mathrm{Learn}}E^{\mathrm{ce}}_{\max}
\]
continues to hold when both notions of learnability are restricted to
computable learners.

To obtain a fuller comparison picture, we next consider three simple witness families:
\begin{itemize}
    \item $\K_A:=\{A_i:i\in\omega\}\cup\{A_\omega\}$, where $A_i:=\{\langle 0,k\rangle:k\leq i\}$ and $A_\omega:=\{\langle 0,k\rangle:k\in\omega\}$;
    \item $\K_B:=\{B_i:i\in\omega\}\cup\{B_\omega\}$, where $B_i:=\{\langle k+1,0\rangle:k\leq i\}$ and $B_\omega:=\{\langle k+1,0\rangle:k\in\omega\}$;
    \item $\K_C:=\{C_i:i\in\omega\}\cup\{C_\omega\}$, where $C_i:=\{\langle k+1,j\rangle:k\leq i\text{ and }j\in\omega\}$ and $C_\omega:=\{\langle k+1,j\rangle:k,j\in\omega\}$.
\end{itemize}

\begin{proposition}
\label{prop:KAE1ce}
The family $\K_A$ is $E^{\mathrm{ce}}_1$-learnable but not $E^{\mathrm{ce}}_3$-, $E^{\mathrm{ce}}_{\mathrm{set}}$-, or $E^{\mathrm{ce}}_{\max}$-learnable.
\end{proposition}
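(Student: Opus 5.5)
The plan is to treat the positive and negative parts separately, using \Cref{prop:eqce_learnable_iff_no_absorbing} throughout.

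\emph{Learnability up to $E^{\mathrm{ce}}_1$.} Every member of $\K_A$ is a subset of column $0$, so for all $k\geq 1$ the $k$-th columns of any two members are empty and hence agree. Thus all members of $\K_A$ are pairwise $E^{\mathrm{ce}}_1$-equivalent. A learner that constantly outputs a fixed index of $A_\omega$ therefore $E^{\mathrm{ce}}_1$-learns $\K_A$.

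\emph{Non-learnability up to $E^{\mathrm{ce}}_3$, $E^{\mathrm{ce}}_{\mathrm{set}}$ and $E^{\mathrm{ce}}_{\max}$.} For each of these three relations we show that $A_\omega$ is $(\K_A,\cdot)$-absorbing. The candidate witnesses are the same in all three cases. Given a finite $F\subseteq A_\omega$, choose $i$ large enough that $F\subseteq A_i$. Then $F\subseteq A_i\subsetneq A_\omega$, since $A_i$ is finite and $A_\omega$ is infinite. It only remains to check that $A_i$ is inequivalent to $A_\omega$ under each relation.

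\begin{itemize}
\item For $E^{\mathrm{ce}}_3$: the $0$-th columns are $\{0,\dots,i\}$ and $\omega$. Their symmetric difference is infinite, so $A_i$ and $A_\omega$ are $E^{\mathrm{ce}}_3$-inequivalent.
\item For $E^{\mathrm{ce}}_{\mathrm{set}}$: the set of columns of $A_i$ is $\{\{0,\dots,i\},\emptyset\}$, while that of $A_\omega$ is $\{\omega,\emptyset\}$. These differ, so the two sets are $E^{\mathrm{ce}}_{\mathrm{set}}$-inequivalent.
\item For $E^{\mathrm{ce}}_{\max}$: $A_i$ is finite and nonempty, while $A_\omega$ is infinite. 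By the definition of $E^{\mathrm{ce}}_{\max}$ they lie in different classes.
\end{itemize}

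By \Cref{prop:eqce_learnable_iff_no_absorbing}, $\K_A$ is therefore not learnable under any of the three relations. No step poses a real obstacle. The only care needed is to compute the column sets exactly, including the empty columns, in the $E^{\mathrm{ce}}_{\mathrm{set}}$ case.
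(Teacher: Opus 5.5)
Your proposal is correct and follows the paper's proof essentially step for step. Both show that all members of $\K_A$ are $E^{\mathrm{ce}}_1$-equivalent because their columns $k\geq 1$ are empty. Both then show that $A_\omega$ is absorbing via the witnesses $A_i$, using the same three inequivalence checks, and conclude by \Cref{prop:eqce_learnable_iff_no_absorbing}.
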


\begin{proof}
For every $i\in\omega$ and every $k>0$, we have $A_i^{[k]}=A_\omega^{[k]}=\emptyset$. Hence all members of $\K_A$ are $E^{\mathrm{ce}}_1$-equivalent, so $\K_A$ is $E^{\mathrm{ce}}_1$-learnable.

For the remaining relations, $A_\omega$ is absorbing. Given finite $F\subseteq A_\omega$, choose $i$ such that $F\subseteq A_i\subsetneq A_\omega$.
\begin{itemize}
    \item Since $|A_i^{[0]}\mathbin{\triangle} A_\omega^{[0]}|=\infty$, we have $A_i\mathrel{\cancel{E^{\mathrm{ce}}_3}}A_\omega$.
    \item Since $\{A_i^{[k]}:k\in\omega\}=\{\emptyset,\{0,\dots,i\}\}$ while $\{A_\omega^{[k]}:k\in\omega\}=\{\emptyset,\omega\}$, we have $A_i\mathrel{\cancel{E^{\mathrm{ce}}_{\mathrm{set}}}}A_\omega$.
    \item Each $A_i$ is finite while $A_\omega$ is infinite, so $A_i\mathrel{\cancel{E^{\mathrm{ce}}_{\max}}}A_\omega$.
\end{itemize}
In each case, \Cref{prop:eqce_learnable_iff_no_absorbing} yields nonlearnability.
\end{proof}

\begin{proposition}
\label{prop:KBE1ce}
The family $\K_B$ is $E^{\mathrm{ce}}_3$-learnable and $E^{\mathrm{ce}}_{\mathrm{set}}$-learnable, but not $E^{\mathrm{ce}}_1$- or $E^{\mathrm{ce}}_{\max}$-learnable.
\end{proposition}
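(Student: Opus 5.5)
Following the pattern of \Cref{prop:KAE1ce}, we prove learnability by showing that all members of $\K_B$ lie in a single equivalence class, and non-learnability by exhibiting $B_\omega$ as an absorbing set and invoking \Cref{prop:eqce_learnable_iff_no_absorbing}.

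\emph{Learnability.} For $E^{\mathrm{ce}}_3$, we compute the columns. For every $i$, the column $B_i^{[0]}$ is empty. For $k\geq 1$, $B_i^{[k]}=\{0\}$ if $k\leq i+1$ and $B_i^{[k]}=\emptyset$ otherwise. Likewise $B_\omega^{[0]}=\emptyset$ and $B_\omega^{[k]}=\{0\}$ for all $k\geq1$. Hence each column of any two members of $\K_B$ differs by at most one element, so every column difference is finite. All members are therefore $E^{\mathrm{ce}}_3$-equivalent, and $\K_B$ is trivially $E^{\mathrm{ce}}_3$-learnable via a constant learner. For $E^{\mathrm{ce}}_{\mathrm{set}}$, the sets of columns are $\{\emptyset,\{0\}\}$ for every $B_i$ (column $0$ is empty and column $1$ is $\{0\}$). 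For $B_\omega$ the set of columns is also $\{\emptyset,\{0\}\}$, because column $0$ is empty. So again all members lie in one class. The key point to check here is exactly that column $0$ is empty in $B_\omega$; this is why the family is shifted by $k+1$.

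\emph{Non-learnability.} Given a finite $F\subseteq B_\omega$, we choose $i$ large enough that $F\subseteq B_i\subsetneq B_\omega$. For $E^{\mathrm{ce}}_1$, the sets $B_i$ and $B_\omega$ differ on every column $k>i+1$, so they do not eventually agree and $B_i\mathrel{\cancel{E^{\mathrm{ce}}_1}}B_\omega$. For $E^{\mathrm{ce}}_{\max}$, $B_i$ is finite and nonempty while $B_\omega$ is infinite, so $B_i\mathrel{\cancel{E^{\mathrm{ce}}_{\max}}}B_\omega$. In both cases $B_\omega$ is $(\K_B,\cdot)$-absorbing, and \Cref{prop:eqce_learnable_iff_no_absorbing} gives non-learnability.

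No step is a real obstacle. The only point requiring care is the column bookkeeping for $E^{\mathrm{ce}}_{\mathrm{set}}$, namely confirming that the empty column occurs in both $B_i$ and $B_\omega$.
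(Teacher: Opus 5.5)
Your proof is correct and follows the paper's argument: you put all members of $\K_B$ into a single $E^{\mathrm{ce}}_3$-class and a single $E^{\mathrm{ce}}_{\mathrm{set}}$-class, and you show $B_\omega$ is absorbing for $E^{\mathrm{ce}}_1$ and $E^{\mathrm{ce}}_{\max}$ using the witnesses $B_i$. You also check explicitly that $B_\omega$ has column set $\{\emptyset,\{0\}\}$ because its column $0$ is empty, a detail the paper leaves implicit when it states this only for the $B_i$.
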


\begin{proof}
For any $X,Y\in\K_B$ and every $k\in\omega$, the symmetric difference
$X^{[k]}\mathbin{\triangle}Y^{[k]}$ is finite. Hence all members of $\K_B$ are
$E^{\mathrm{ce}}_3$-equivalent, so $\K_B$ is $E^{\mathrm{ce}}_3$-learnable.

Moreover, for every $i$,
\[
\{B_i^{[k]}:k\in\omega\}=\{\emptyset,\{0\}\},
\]
so all members of $\K_B$ are $E^{\mathrm{ce}}_{\mathrm{set}}$-equivalent, and $\K_B$ is $E^{\mathrm{ce}}_{\mathrm{set}}$-learnable.

For $E\in\{E^{\mathrm{ce}}_1,E^{\mathrm{ce}}_{\max}\}$, the set $B_\omega$ is $(\K_B,E)$-absorbing. Given finite $F\subseteq B_\omega$, choose $i$ with $F\subseteq B_i\subsetneq B_\omega$. Then for all $k>i+1$ we have $B_i^{[k]}=\emptyset\neq\{0\}=B_\omega^{[k]}$, so $B_i\mathrel{\cancel{E^{\mathrm{ce}}_1}}B_\omega$; and since $B_i$ is finite while $B_\omega$ is infinite, also $B_i\mathrel{\cancel{E^{\mathrm{ce}}_{\max}}}B_\omega$. Hence $\K_B$ is not learnable for either relation.
\end{proof}

\begin{proposition}
\label{prop:KDEsetce}
The family $\K_C$ is $E^{\mathrm{ce}}_{\max}$-learnable and $E^{\mathrm{ce}}_{\mathrm{set}}$-learnable, but not $E^{\mathrm{ce}}_1$- or $E^{\mathrm{ce}}_3$-learnable.
\end{proposition}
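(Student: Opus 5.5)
The plan follows the same pattern as \Cref{prop:KAE1ce} and \Cref{prop:KBE1ce}. For the positive results, I will show that all members of $\K_C$ lie in a single equivalence class, so the family is trivially learnable. For the negative results, I will show that $C_\omega$ is absorbing and apply \Cref{prop:eqce_learnable_iff_no_absorbing}.

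\emph{Positive part.} Every $C_i$ and $C_\omega$ is infinite, since each contains the whole column $\omega$ at index $1$. Hence all members of $\K_C$ are $E^{\mathrm{ce}}_{\max}$-equivalent. For $E^{\mathrm{ce}}_{\mathrm{set}}$ we compute the column sets. The set $C_i$ has column $0$ empty, columns $1,\dots,i+1$ equal to $\omega$, and all later columns empty, so $\{C_i^{[k]}:k\in\omega\}=\{\emptyset,\omega\}$. The set $C_\omega$ has column $0$ empty and every other column equal to $\omega$, so its set of columns is also $\{\emptyset,\omega\}$. (This is why the shift $k+1$ was built in.) Thus all members are $E^{\mathrm{ce}}_{\mathrm{set}}$-equivalent.

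\emph{Negative part.} Fix a finite $F\subseteq C_\omega$. All elements of $F$ lie in finitely many columns, so we can choose $i$ with $F\subseteq C_i$. Moreover $C_i\subsetneq C_\omega$, since column $i+2$ is empty in $C_i$ but equal to $\omega$ in $C_\omega$. It then remains to check inequivalence for both relations.
\begin{itemize}
\item For $E^{\mathrm{ce}}_1$: for every $k>i+1$ we have $C_i^{[k]}=\emptyset\neq\omega=C_\omega^{[k]}$. So the columns differ at infinitely many $k$, and $C_i$ is not $E^{\mathrm{ce}}_1$-equivalent to $C_\omega$.
\item For $E^{\mathrm{ce}}_3$: already at $k=i+2$ the difference $C_i^{[k]}\mathbin{\triangle}C_\omega^{[k]}=\omega$ is infinite, so $C_i$ is not $E^{\mathrm{ce}}_3$-equivalent to $C_\omega$.
\end{itemize}
Hence $C_\omega$ is $(\K_C,E^{\mathrm{ce}}_1)$-absorbing and $(\K_C,E^{\mathrm{ce}}_3)$-absorbing, and \Cref{prop:eqce_learnable_iff_no_absorbing} gives non-learnability for both relations.

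No step is a real obstacle. The only point needing care is the column computation for $E^{\mathrm{ce}}_{\mathrm{set}}$, namely verifying that the empty column $0$ occurs in both $C_i$ and $C_\omega$ so that their column sets agree.
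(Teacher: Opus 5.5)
Your proposal is correct and follows the paper's proof essentially step for step. Both put all members of $\K_C$ in a single $E^{\mathrm{ce}}_{\max}$-class and a single $E^{\mathrm{ce}}_{\mathrm{set}}$-class. Both then show that $C_\omega$ is absorbing via $C_i^{[k]}=\emptyset\neq\omega=C_\omega^{[k]}$ for $k>i+1$, and your explicit check that column $0$ is empty in every member is a small clarification the paper leaves implicit.
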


\begin{proof}
Every member of $\K_C$ is infinite. Hence all members of $\K_C$ are $E^{\mathrm{ce}}_{\max}$-equivalent, so $\K_C$ is $E^{\mathrm{ce}}_{\max}$-learnable.

Also, for every $i$,
\[
\{C_i^{[k]}:k\in\omega\}=\{\emptyset,\omega\},
\]
so all members of $\K_C$ are $E^{\mathrm{ce}}_{\mathrm{set}}$-equivalent, and thus $\K_C$ is $E^{\mathrm{ce}}_{\mathrm{set}}$-learnable.

For $E\in\{E^{\mathrm{ce}}_1,E^{\mathrm{ce}}_3\}$, the set $C_\omega$ is $(\K_C,E)$-absorbing. Given finite $F\subseteq C_\omega$, choose $i$ with $F\subseteq C_i\subsetneq C_\omega$. Then for every $k>i+1$ we have $C_i^{[k]}=\emptyset\neq\omega=C_\omega^{[k]}$, so $C_i\mathrel{\cancel{E^{\mathrm{ce}}_1}}C_\omega$; moreover, for each such $k$ the symmetric difference is infinite, so $C_i\mathrel{\cancel{E^{\mathrm{ce}}_3}}C_\omega$. By \Cref{prop:eqce_learnable_iff_no_absorbing}, $\K_C$ is not learnable for either relation.
\end{proof}

The three witness families immediately yield the following incomparability theorem.

\begin{theorem}
\label{thm:incomparability}
The relations $E^{\mathrm{ce}}_1$, $E^{\mathrm{ce}}_3$, $E^{\mathrm{ce}}_{\mathrm{set}}$, and $E^{\mathrm{ce}}_{\max}$ are pairwise incomparable under $\leq_{\mathrm{Learn}}$.
\end{theorem}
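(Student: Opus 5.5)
The plan is to read off the twelve ordered non-reductions from the three witness families $\K_A$, $\K_B$, $\K_C$ of \Cref{prop:KAE1ce,prop:KBE1ce,prop:KDEsetce}. The only fact needed is that $E^{\ce}\not\leq_{\Learn}F^{\ce}$ as soon as some family is $E^{\ce}$-learnable but not $F^{\ce}$-learnable. For each ordered pair $(E^{\ce},F^{\ce})$ of distinct relations from $\{E^{\ce}_1,E^{\ce}_3,E^{\ce}_{\mathrm{set}},E^{\ce}_{\max}\}$, I will select one such separating family.

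The relation $E^{\ce}_1$ learns $\K_A$, while $E^{\ce}_3$, $E^{\ce}_{\mathrm{set}}$ and $E^{\ce}_{\max}$ do not. This gives $E^{\ce}_1\not\leq_{\Learn}E^{\ce}_3,E^{\ce}_{\mathrm{set}},E^{\ce}_{\max}$.

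The relation $E^{\ce}_3$ learns $\K_B$, which is not $E^{\ce}_1$- or $E^{\ce}_{\max}$-learnable. This gives $E^{\ce}_3\not\leq_{\Learn}E^{\ce}_1,E^{\ce}_{\max}$. The one remaining case here, $E^{\ce}_3\not\leq_{\Learn}E^{\ce}_{\mathrm{set}}$, is supplied by the family $\K_D$ from the proof of \Cref{prop:E0belowE1}. All members of $\K_D$ differ only finitely, so they are pairwise $E^{\ce}_0$-equivalent and hence $E^{\ce}_3$-equivalent, since $E^{\ce}_0\subseteq E^{\ce}_3$. Therefore $\K_D$ is $E^{\ce}_3$-learnable, but it was shown there not to be $E^{\ce}_{\mathrm{set}}$-learnable.

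The relation $E^{\ce}_{\mathrm{set}}$ learns both $\K_B$ and $\K_C$. The family $\K_B$ is not $E^{\ce}_1$- or $E^{\ce}_{\max}$-learnable, and $\K_C$ is not $E^{\ce}_3$-learnable. This gives all three non-reductions $E^{\ce}_{\mathrm{set}}\not\leq_{\Learn}E^{\ce}_1,E^{\ce}_3,E^{\ce}_{\max}$.

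The relation $E^{\ce}_{\max}$ learns $\K_C$, which is not $E^{\ce}_1$- or $E^{\ce}_3$-learnable. This gives $E^{\ce}_{\max}\not\leq_{\Learn}E^{\ce}_1,E^{\ce}_3$. The remaining case, $E^{\ce}_{\max}\not\leq_{\Learn}E^{\ce}_{\mathrm{set}}$, is again handled by $\K_D$. Every member of $\K_D$ is infinite, so all members lie in a single $E^{\ce}_{\max}$-class, and $\K_D$ is therefore trivially $E^{\ce}_{\max}$-learnable. As noted above, $\K_D$ is not $E^{\ce}_{\mathrm{set}}$-learnable.

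The only point requiring any care is that the three named families alone do not separate $E^{\ce}_3$ and $E^{\ce}_{\max}$ from $E^{\ce}_{\mathrm{set}}$. The family $\K_D$ from \Cref{prop:E0belowE1} closes this gap. Its $E^{\ce}_{\mathrm{set}}$-absorbing argument is already done in that proof, and the two positive learnability claims are immediate because all members of $\K_D$ lie in a single class. Once these twelve entries are checked, all twelve non-reductions are established, and hence the four relations are pairwise incomparable.
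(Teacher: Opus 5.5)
Your proposal is correct and is essentially the paper's proof. The nine non-reductions whose target is not $E^{\ce}_{\mathrm{set}}$ are read off from $\K_A$, $\K_B$, $\K_C$ exactly as you do. The paper obtains the three non-reductions into $E^{\ce}_{\mathrm{set}}$ by transitivity from $E^{\ce}_0\leq_{\Learn}E^{\ce}_1,E^{\ce}_3,E^{\ce}_{\max}$ and $E^{\ce}_0\not\leq_{\Learn}E^{\ce}_{\mathrm{set}}$, which is ultimately witnessed by the same family $\K_D$ that you use directly.

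One small advantage of your route: for $E^{\ce}_{\max}\not\leq_{\Learn}E^{\ce}_{\mathrm{set}}$ you only need that every member of $\K_D$ is infinite, so you bypass the non-effective learner transformation behind $E^{\ce}_0\leq_{\Learn}E^{\ce}_{\max}$. As a result, every positive learnability claim in your argument is witnessed by a constant learner, and your separations also hold when learners are required to be computable.
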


\begin{proof}
By \Cref{prop:KAE1ce}, the family $\K_A$ is $E^{\mathrm{ce}}_1$-learnable but not $E^{\mathrm{ce}}_3$- or $E^{\mathrm{ce}}_{\max}$-learnable. Hence
\[
E^{\mathrm{ce}}_1\not\leq_{\mathrm{Learn}}E^{\mathrm{ce}}_3,E^{\mathrm{ce}}_{\max}.
\]

By \Cref{prop:KBE1ce}, the family $\K_B$ is $E^{\mathrm{ce}}_3$-learnable and $E^{\mathrm{ce}}_{\mathrm{set}}$-learnable, but not $E^{\mathrm{ce}}_1$- or $E^{\mathrm{ce}}_{\max}$-learnable. Hence
\[
E^{\mathrm{ce}}_3\not\leq_{\mathrm{Learn}}E^{\mathrm{ce}}_1,E^{\mathrm{ce}}_{\max},
\qquad
E^{\mathrm{ce}}_{\mathrm{set}}\not\leq_{\mathrm{Learn}}E^{\mathrm{ce}}_1,E^{\mathrm{ce}}_{\max}.
\]

By \Cref{prop:KDEsetce}, the family $\K_C$ is $E^{\mathrm{ce}}_{\max}$-learnable and $E^{\mathrm{ce}}_{\mathrm{set}}$-learnable, but not $E^{\mathrm{ce}}_1$- or $E^{\mathrm{ce}}_3$-learnable. Hence
\[
E^{\mathrm{ce}}_{\max}\not\leq_{\mathrm{Learn}}E^{\mathrm{ce}}_1,E^{\mathrm{ce}}_3,
\qquad
E^{\mathrm{ce}}_{\mathrm{set}}\not\leq_{\mathrm{Learn}}E^{\mathrm{ce}}_1,E^{\mathrm{ce}}_3.
\]

Finally, by \Cref{prop:E0belowE1},
\[
E^{\mathrm{ce}}_0\leq_{\mathrm{Learn}}E^{\mathrm{ce}}_1,E^{\mathrm{ce}}_3,E^{\mathrm{ce}}_{\max}
\qquad\text{and}\qquad
E^{\mathrm{ce}}_0\not\leq_{\mathrm{Learn}}E^{\mathrm{ce}}_{\mathrm{set}}.
\]
By transitivity, this implies that none of $E^{\mathrm{ce}}_1,E^{\mathrm{ce}}_3,E^{\mathrm{ce}}_{\max}$ can reduce to $E^{\mathrm{ce}}_{\mathrm{set}}$.

Combining all these nonreductions shows that the four relations are pairwise incomparable.
\end{proof}

Thus the benchmark relations do not line up in any simple chain under learn-reducibility. In particular, moving to coarser semantic notions can change learning power in genuinely incomparable ways.

\begin{corollary}
We have
\[
E^{\mathrm{ce}}_0<_{\mathrm{Learn}}E^{\mathrm{ce}}_1,\qquad
E^{\mathrm{ce}}_0<_{\mathrm{Learn}}E^{\mathrm{ce}}_3,\qquad
E^{\mathrm{ce}}_0<_{\mathrm{Learn}}E^{\mathrm{ce}}_{\max},
\]
and
\[
E^{\mathrm{ce}}_0
\mathrel{|_{\mathrm{Learn}}}
E^{\mathrm{ce}}_{\mathrm{set}}.
\]
\end{corollary}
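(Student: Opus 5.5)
The corollary follows by combining \Cref{prop:E0belowE1} with the incomparability results, so the plan is to read off each strictness claim from a witness family already exhibited. For the three strict inequalities, the non-strict parts $E^{\mathrm{ce}}_0\leq_{\mathrm{Learn}}E^{\mathrm{ce}}_1,E^{\mathrm{ce}}_3,E^{\mathrm{ce}}_{\max}$ are exactly the first half of \Cref{prop:E0belowE1}. For strictness I would argue by transitivity through $E^{\mathrm{ce}}_{\mathrm{set}}$, exactly as in the last step of the proof of \Cref{thm:incomparability}. Suppose, say, $E^{\mathrm{ce}}_1\leq_{\mathrm{Learn}}E^{\mathrm{ce}}_0$. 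Then $E^{\mathrm{ce}}_1\equiv_{\mathrm{Learn}}E^{\mathrm{ce}}_0$, and by \Cref{thm:incomparability} (specifically \Cref{prop:KAE1ce}) the family $\K_A$ is $E^{\mathrm{ce}}_1$-learnable. It would therefore be $E^{\mathrm{ce}}_0$-learnable, and since $E^{\mathrm{ce}}_0\leq_{\mathrm{Learn}}E^{\mathrm{ce}}_{\max}$ it would also be $E^{\mathrm{ce}}_{\max}$-learnable. This contradicts \Cref{prop:KAE1ce}.

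The same pattern handles the other two cases. For $E^{\mathrm{ce}}_3$, I would use $\K_B$, which is $E^{\mathrm{ce}}_3$-learnable but not $E^{\mathrm{ce}}_1$-learnable, while $E^{\mathrm{ce}}_0\leq_{\mathrm{Learn}}E^{\mathrm{ce}}_1$. For $E^{\mathrm{ce}}_{\max}$, I would use $\K_C$, which is $E^{\mathrm{ce}}_{\max}$-learnable but not $E^{\mathrm{ce}}_1$-learnable. A more direct alternative is to check that $\K_A$, $\K_B$, $\K_C$ fail $E^{\mathrm{ce}}_0$-learnability by exhibiting absorbing sets. In each case the infinite member differs from the finite (respectively column-truncated) members by an infinite set, so \Cref{prop:eqce_learnable_iff_no_absorbing} applies. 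I would mention this as a sanity check, but the transitivity route uses only stated results.

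For $E^{\mathrm{ce}}_0\mathrel{|_{\mathrm{Learn}}}E^{\mathrm{ce}}_{\mathrm{set}}$, one direction is the second half of \Cref{prop:E0belowE1}. For the other, $\K_B$ is $E^{\mathrm{ce}}_{\mathrm{set}}$-learnable by \Cref{prop:KBE1ce} but not $E^{\mathrm{ce}}_1$-learnable. If it were $E^{\mathrm{ce}}_0$-learnable, it would be $E^{\mathrm{ce}}_1$-learnable via $E^{\mathrm{ce}}_0\leq_{\mathrm{Learn}}E^{\mathrm{ce}}_1$, which is impossible. Hence $E^{\mathrm{ce}}_{\mathrm{set}}\not\leq_{\mathrm{Learn}}E^{\mathrm{ce}}_0$.

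There is no genuine obstacle. The only point needing care is choosing, for each strictness claim, a witness family together with a relation above $E^{\mathrm{ce}}_0$ that fails to learn it, so that the chain of reductions is correctly oriented.
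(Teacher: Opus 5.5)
Your proof is correct and uses the same witness families $\K_A$, $\K_B$, $\K_C$ as the paper. The difference is in how you show that these families are not $E^{\mathrm{ce}}_0$-learnable.

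The paper checks this directly: $A_\omega$, $B_\omega$, $C_\omega$ are $(\K_A,E^{\mathrm{ce}}_0)$-, $(\K_B,E^{\mathrm{ce}}_0)$- and $(\K_C,E^{\mathrm{ce}}_0)$-absorbing, because every finite fragment lies inside a proper member with infinite symmetric difference. That is exactly your ``sanity check''. Your main route instead pushes $E^{\mathrm{ce}}_0$-learnability upward along $E^{\mathrm{ce}}_0\leq_{\mathrm{Learn}}E^{\mathrm{ce}}_1,E^{\mathrm{ce}}_{\max}$. It then contradicts the non-learnability already recorded in \Cref{prop:KAE1ce,prop:KBE1ce,prop:KDEsetce}. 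This needs no new verification but depends on \Cref{prop:E0belowE1}, whereas the paper's check is self-contained.

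Two small points. First, your argument does not go ``through $E^{\mathrm{ce}}_{\mathrm{set}}$'' as you announce; it goes through $E^{\mathrm{ce}}_{\max}$ and $E^{\mathrm{ce}}_1$. Second, in the $E^{\mathrm{ce}}_1$ case you could route through $E^{\mathrm{ce}}_3$ instead of $E^{\mathrm{ce}}_{\max}$, since $\K_A$ is not $E^{\mathrm{ce}}_3$-learnable either. Then every reduction in your strictness arguments comes from plain inclusion $E^{\mathrm{ce}}_0\subseteq E^{\mathrm{ce}}_1,E^{\mathrm{ce}}_3$ via \Cref{lem:quotienting_increases_learning_power}. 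You would no longer rely on the non-effective learner that tests whether $W_{\learnerM(\sigma)}$ is infinite.
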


\begin{proof}
The reductions from $E^{\mathrm{ce}}_0$ to $E^{\mathrm{ce}}_1$, $E^{\mathrm{ce}}_3$, and $E^{\mathrm{ce}}_{\max}$ follow from \Cref{prop:E0belowE1}. They are strict because $A_\omega$, $B_\omega$, and $C_\omega$ are, respectively, $(\K_A,E^{\mathrm{ce}}_0)$-, $(\K_B,E^{\mathrm{ce}}_0)$-, and $(\K_C,E^{\mathrm{ce}}_0)$-absorbing: every finite subset of the distinguished set is contained in a proper member of the corresponding family whose symmetric difference from it is infinite. Hence \Cref{prop:KAE1ce,prop:KBE1ce,prop:KDEsetce} witness, respectively,
\[
E^{\mathrm{ce}}_1\not\leq_{\mathrm{Learn}}E^{\mathrm{ce}}_0,\qquad
E^{\mathrm{ce}}_3\not\leq_{\mathrm{Learn}}E^{\mathrm{ce}}_0,\qquad
E^{\mathrm{ce}}_{\max}\not\leq_{\mathrm{Learn}}E^{\mathrm{ce}}_0.
\]
The final incomparability is exactly the last part of \Cref{prop:E0belowE1}, together with the fact that $E^{\mathrm{ce}}_{\mathrm{set}}\not\leq_{\mathrm{Learn}}E^{\mathrm{ce}}_0$, witnessed by $\K_B$ (or by $\K_C$).
\end{proof}

We conclude this section with a natural universal relation distinct from $\Id_1$.

\begin{proposition}
$E^{\mathrm{ce}}_{\min}$ is universal.
\end{proposition}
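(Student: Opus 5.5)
The plan is to verify the locking property and then invoke \Cref{BC_characterized_by_LP}. (Alternatively, one can build the learner directly: output an index for $\{\min\operatorname{range}(\sigma)\}$, which changes only when a smaller element appears.)

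Let $W_e$ be a nonempty c.e.\ set. Let $(b_i)_{i\in\omega}$ be indices of finite sets with $W_{b_i}\subseteq W_{b_{i+1}}$ and $\bigcup_i W_{b_i}=W_e$. Put $m:=\min W_e$, which exists because $W_e\neq\emptyset$. Since the union of the $W_{b_i}$ is $W_e$, there is some $k$ with $m\in W_{b_k}$. By monotonicity, $m\in W_{b_i}$ for every $i\geq k$.

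For such $i$, the inclusion $W_{b_i}\subseteq W_e$ (each approximation lies inside the union) gives $\min W_{b_i}\geq m$. Combined with $m\in W_{b_i}$, this yields $\min W_{b_i}=m=\min W_e$. Both sets are nonempty, so $b_i\mathrel{E^{\mathrm{ce}}_{\min}}e$ for all $i\geq k$. Thus $k$ is a locking witness.

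Hence $E^{\mathrm{ce}}_{\min}$ has the locking property, and \Cref{BC_characterized_by_LP} gives universality. There is no real obstacle here. The only points needing care are the nonemptiness of $W_e$, which guarantees that $\min$ is defined and that we are not in the empty-set clause of $E^{\mathrm{ce}}_{\min}$, and the observation that the early approximations $W_{b_i}$ with $i<k$ may be empty or have the wrong minimum. The locking property only asks about a tail of the sequence, so those early terms do no harm.
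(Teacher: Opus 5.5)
Your proof is correct, but it reaches universality through a different characterisation than the paper does. The paper argues via \Cref{prop:eqce_learnable_iff_no_absorbing}. An $E^{\mathrm{ce}}_{\min}$-absorbing set $W_i$ would have to be infinite. Then the singleton $\{n\}$ of its least element has no absorbing witness, since every $W_u$ with $\{n\}\subseteq W_u\subsetneq W_i$ again has least element $n$. You instead verify the locking property and apply \Cref{BC_characterized_by_LP}.

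The underlying observation is the same in both: once the least element of the target has appeared, every set squeezed between the data and the target has the same minimum. The difference lies only in which characterisation theorem carries the argument. Your route uses the criterion the paper designed specifically for universality. Through the learner built in the proof of \Cref{BC_characterized_by_LP}, it also yields an explicit learner rather than the search-based one from the tell-tale theorem. The paper's route is marginally shorter, since it needs no approximating sequence.

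Your parenthetical direct learner (output an index for $\{\min\operatorname{range}(\sigma)\}$) is worth keeping. It is computable, because a computable $q$ with $W_{q(m)}=\{m\}$ exists. It converges, because the observed minimum is non-increasing and equals $\min W_e$ from the stage at which that element first appears. So it actually shows that $E^{\mathrm{ce}}_{\min}$ is \emph{computably} universal. That is a genuinely stronger conclusion in view of \Cref{thm:EK_not_comp_universal}, and the paper does not record it.
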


\begin{proof}
By \Cref{prop:eqce_learnable_iff_no_absorbing}, it suffices to show that there is no $E^{\mathrm{ce}}_{\min}$-absorbing set. Suppose towards a contradiction that $W_i$ is $E^{\mathrm{ce}}_{\min}$-absorbing. The set $W_i$ cannot be finite, since taking $F=W_i$ would leave no proper subset of $W_i$ containing $F$. Hence $W_i$ is infinite, in particular nonempty, and has a least element $n$. Consider the finite set $\{n\}\subseteq W_i$. Any c.e.\ set $W_u$ satisfying $\{n\}\subseteq W_u\subsetneq W_i$ must also have least element $n$, and therefore $u\mathrel{E^{\mathrm{ce}}_{\min}}i$. So $\{n\}$ has no $E^{\mathrm{ce}}_{\min}$-absorbing witness, contradiction.
\end{proof}

\subsection{Uniformity and comparison with other reducibilities}

The preorder $\leq_{\mathrm{Learn}}$ is very different from the standard reducibilities used in descriptive set theory and computability theory. To avoid confusion, we briefly compare the same benchmark relations under Borel reducibility, continuous reducibility, computable reducibility, and learn-reducibility; see Table~\ref{picture:summary}.

In general, a \define{uniform reduction} from an equivalence relation $E$ on a space $X$ to an equivalence relation $F$ on a space $Y$ is a function $\Gamma$ such that
$x\, E\, x'\iff\Gamma(x)\, F\, \Gamma(x')$.
If no definability restriction is imposed on $\Gamma$, reducibility depends only on the number of equivalence classes and is therefore too coarse to be informative. In descriptive set theory the most studied notion is \define{Borel reducibility}, denoted $\leq_B$, for equivalence relations on Polish spaces. One can also require the reduction to be continuous, yielding \define{continuous reducibility}, denoted $\leq_{\mathrm{cont}}$. In computability theory one usually works with equivalence relations on $\omega$ and asks the reduction to be computable; this yields \define{computable reducibility}, denoted
$\leq_{\mathrm{comp}}$.

The first two panels of Table~\ref{picture:summary}
combine the standard Borel and continuous comparisons among the classical
benchmark relations $=,E_0,E_1,E_3$, and $E_{\mathrm{set}}$ with the
elementary placements of $E_{\min}$ and $E_{\max}$. For background on
these benchmark relations and their Borel reducibility ordering, see
\cite{gao2008invariant}. The computable-reducibility
panel summarises standard comparisons among the corresponding c.e.-index
relations; see \cite{FokinaFriedmanNies2012Sigma3,coskey2012hierarchy}.
The next proposition records the elementary Borel and continuous comparisons
involving $E_{\min}$ and $E_{\max}$ that are needed to complete the first
two panels. The final panel collects the learn-reducibility results
established above. In all four panels, lines are read from bottom to top
as strict reductions, and transitive edges are omitted.

\smallskip

In this subsection, the unadorned benchmark relations are the standard relations on Baire space. Thus $=$ is equality on $\omega^\omega$, $=_1$ is the one-class equivalence relation, and $E_0$ is eventual equality. For $m,t\in\omega$, write $m^t$ for the length-$t$ constant string with value $m$ and $m^\omega$ for the constant real with value $m$. For $x\in\omega^\omega$, write $\operatorname{range}(x):=\{x(n):n\in\omega\}$ and $x^{[k]}(n):=x(\langle k,n\rangle)$. We also let
\begin{align*}
x\mathrel{E_1}y
&\iff (\forall^\infty k)\,\bigl(x^{[k]}=y^{[k]}\bigr),\\
x\mathrel{E_3}y
&\iff (\forall k)\,\bigl(x^{[k]}\mathrel{E_0}y^{[k]}\bigr),\\
x\mathrel{E_{\mathrm{set}}}y
&\iff \{x^{[k]}:k\in\omega\}=\{y^{[k]}:k\in\omega\},\\
x\mathrel{E_{\min}}y
&\iff \min(\operatorname{range}(x))=\min(\operatorname{range}(y)),\\
x\mathrel{E_{\max}}y
&\iff \bigl(x,y\text{ have no maximum in their ranges}\bigr)\\
&\hspace{2.4em}{}\lor
\bigl(x,y\text{ have maxima and }
\max(\operatorname{range}(x))=\max(\operatorname{range}(y))\bigr).
\end{align*}

\medskip

\begin{table}[H]
\scalebox{0.9}{
    \begin{tabular}{cccc}
    \input{BorelRed.tikz} &
    \input{contRed.tikz} &
    \input{EceHierarchy.tikz} &
    \input{TLH.tikz}
    \\\hline
    \end{tabular}
}
\caption{From left to right: the benchmark relations ordered by Borel reducibility, continuous reducibility, computable reducibility, and learn-reducibility.}
\label{picture:summary}
\end{table}

\bigskip

\begin{proposition}\label{prop:uniformity_comparisons}
The following hold:
\begin{itemize}
    \item[(a)] $=_1<_B E_{\min}\equiv_B E_{\max}<_B =$;
    \item[(b)] $=_1<_{\mathrm{cont}}R$ for every
    $R\in\{=,E_{\min},E_{\max}\}$, and the relations
    $=$, $E_{\min}$, and $E_{\max}$ are pairwise incomparable under
    $\leq_{\mathrm{cont}}$;
    \item[(c)] $E_{\min}<_{\mathrm{cont}}E_0$, while
    $E_{\max}\nleq_{\mathrm{cont}}E_0$;
    \item[(d)] $E_{\max}<_{\mathrm{cont}}E_3$.
\end{itemize}
\end{proposition}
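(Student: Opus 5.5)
The plan is to treat the four items separately. For the positive reductions we write down explicit maps; for the non-reductions we use three tools: cardinality of the quotient, continuity together with the closedness of suitable sets, and a Baire category argument. Throughout, $E_{\min}$ has countably infinitely many classes, one for each $n\in\omega$. $E_{\max}$ has countably infinitely many classes, one for each finite maximum $m$, plus the single class $N$ of reals with no maximum.

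For (a), $=_1$ reduces to anything via a constant map. Every reduction in the other direction fails because $E_{\min}$, $E_{\max}$ and $=$ have more than one class. We take $x\mapsto(\min\operatorname{range}(x))^\omega$ for $E_{\min}\leq_B{=}$. For $E_{\min}\leq_B E_{\max}$ we take $x\mapsto(\min\operatorname{range}(x))^\omega$ as well, since constant reals have maximum equal to that value. For $E_{\max}\leq_B E_{\min}$ we send $x$ to $(m+1)^\omega$ if $x$ has maximum $m$, and to $0^\omega$ otherwise. This map is Borel because ``$\max\operatorname{range}(x)=m$'' is a $\Sigma^0_2$ condition. Finally, $=\nleq_B E_{\max}$ holds because no map whatsoever reduces a relation with $2^{\aleph_0}$ classes to one with countably many.

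For (b), the reductions from $=_1$ are constant maps, and $=$ does not reduce to $E_{\min}$ or $E_{\max}$ by cardinality. For $E_{\min}\nleq_{\mathrm{cont}}{=}$, we observe that $1^t0^\omega\to1^\omega$. A reduction to $=$ is constant on $E_{\min}$-classes, so continuity forces $f(1^\omega)=f(0^\omega)$, which is a contradiction. The same argument with $0^t1^\omega\to0^\omega$ gives $E_{\max}\nleq_{\mathrm{cont}}{=}$.

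For $E_{\max}\nleq_{\mathrm{cont}}E_{\min}$, we use the points $0^t k^\omega\to 0^\omega$ with $k\geq 1$. These lie in pairwise distinct classes, so their images have pairwise distinct minima. Suppose $f(0^\omega)$ has minimum $d$. Then some finite prefix of $f(0^\omega)$ already exhibits $d$. By continuity, every image of a point close to $0^\omega$ has minimum $\leq d$, so only finitely many minima are available near $0^\omega$. This contradicts the infinitely many distinct minima, by pigeonhole.

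For $E_{\min}\nleq_{\mathrm{cont}}E_{\max}$, we use that $\{y:\max\operatorname{range}(y)\leq M\}$ is closed, together with $n^t0^\omega\to n^\omega$. Suppose the class of $0$ maps to maximum $M$. Then every class $n\geq1$ maps to maximum $\leq M$, hence $<M$. This leaves only finitely many values for infinitely many classes. So class $0$ maps into $N$. Running the same argument with $n^t1^\omega\to n^\omega$ for $n\geq 2$ forces class $1$ into $N$ too, and this collision contradicts $f$ being a reduction.

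For (c), we map $x$ to $y$ with $y(t):=\min\operatorname{range}(x[t+1])$. This map is continuous, and $y$ is eventually constant with value $\min\operatorname{range}(x)$. Hence $x\mathrel{E_{\min}}x'$ if and only if $y\mathrel{E_0}y'$. The reduction is strict since $E_0$ has $2^{\aleph_0}$ classes.

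For $E_{\max}\nleq_{\mathrm{cont}}E_0$, which we expect to be the main obstacle, we would use Baire category. The class $N$ is comeager, being the countable intersection of the dense open sets ``some entry exceeds $m$''. A continuous reduction maps $N$ into a single $E_0$-class, which is countable. Hence some fibre $f^{-1}(y)$ is closed and non-meager, and so contains a basic open set $[\sigma]$. But $[\sigma]$ contains reals with a finite maximum, for example $\sigma^\smallfrown0^\omega$, and these are $E_{\max}$-inequivalent to reals in $N$ that also lie in $[\sigma]$. This contradicts $f$ being a reduction.

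For (d), we define $y$ by $y^{[m]}(n):=1$ if $\max\operatorname{range}(x[n+1])>m$, and $y^{[m]}(n):=0$ otherwise. This map is continuous and every column is eventually constant. If $x$ has maximum $M$, the columns $m<M$ are eventually $1$ and the columns $m\geq M$ are eventually $0$. If $x\in N$, every column is eventually $1$. Distinct classes therefore yield different limit patterns, which forces some column to differ infinitely, while equal classes agree modulo finite on every column. So this is a reduction to $E_3$, and it is strict because $E_3$ has $2^{\aleph_0}$ classes.
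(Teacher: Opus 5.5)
Your proof is correct and complete, and much of it matches the paper. The maps in (c) and (d) are the paper's maps. Your arguments against $E_{\min},E_{\max}\leq_{\mathrm{cont}}{=}$ and $E_{\max}\leq_{\mathrm{cont}}E_{\min}$ use the same idea of continuity at a constant real. You phrase them via convergent sequences, and for $E_{\max}\nleq_{\mathrm{cont}}E_{\min}$ you use pigeonhole over the infinitely many classes of $0^t{}^\smallfrown k^\omega$ inside one neighbourhood of $0^\omega$, where the paper picks a single pair $p<q$.

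The genuinely different step is $E_{\max}\nleq_{\mathrm{cont}}E_0$.
\begin{itemize}
\item The paper uses an explicit diagonal construction. It extends prefixes $\sigma_n,\tau_n$, with maxima $n$ and $n+1$, so that at each stage the images of $\sigma_n{}^\smallfrown(n+1)^\omega$ and $\tau_n{}^\smallfrown(n+2)^\omega$ are forced to disagree at a fresh coordinate $k_n$. This produces two reals without maximum whose images are $E_0$-inequivalent.
\item You use Baire category instead. The no-maximum class $N$ is dense $G_\delta$, and a continuous reduction sends it into a single countable $E_0$-class. Hence some closed fibre is non-meager, so it contains a basic open set $[\sigma]$, and $[\sigma]$ meets both $N$ and a finite-maximum class.
\end{itemize}
Your route is shorter and shows more: $E_{\max}$ has no continuous reduction to any equivalence relation with countable classes on a space whose points are closed. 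The paper's route never uses the size of $E_0$-classes. It needs only that $E_0$-inequivalence means infinitely many coordinatewise disagreements, each forced by finite prefixes.

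There are two minor further differences. In (a) you write down explicit Borel maps, where the paper invokes smoothness with countably many classes. For $E_{\min}\nleq_{\mathrm{cont}}E_{\max}$ you use closedness of $\{y:\forall j\,y(j)\leq M\}$ and a two-round pigeonhole that forces the classes of $0$ and $1$ both into $N$. The paper reaches the contradiction in one step, via continuity at $q^\omega$ applied to $q^t{}^\smallfrown p^\omega$ with $a_p<a_q$.
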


\begin{proof}
For (a), note first that $=_1$ has only one equivalence class, whereas
$E_{\min}$, $E_{\max}$, and $=$ have more than one. The relations
$E_{\min}$ and $E_{\max}$ are smooth and have countably many equivalence
classes; each is therefore Borel bireducible with equality on $\omega$,
which is strictly below equality on Baire space.

For (b), constant maps show that $=_1\leq_{\mathrm{cont}}R$ for every
$R\in\{=,E_{\min},E_{\max}\}$, and these reductions are strict because
all three target relations have more than one equivalence class. Equality
on $\omega^\omega$ is not reducible to either $E_{\min}$ or $E_{\max}$,
even without a regularity requirement on the reduction, because it has
continuum many equivalence classes whereas $E_{\min}$ and $E_{\max}$
have only countably many.

Suppose that a continuous map $\Gamma$ reduces $E_{\min}$ to equality.
Since $1^\omega\mathrel{\cancel{E_{\min}}}2^\omega$, choose $q$ such that
$\Gamma(1^\omega)(q)\neq\Gamma(2^\omega)(q)$. By continuity at
$2^\omega$, there is a $t$ such that every $x\sqsupseteq 2^t$ satisfies
$\Gamma(x)(q)=\Gamma(2^\omega)(q)$. But $2^t{}^\smallfrown 1^\omega$ is
$E_{\min}$-equivalent to $1^\omega$, a contradiction.

Similarly, suppose that a continuous map $\Gamma$ reduces $E_{\max}$ to
equality. Choose $q$ such that
$\Gamma(1^\omega)(q)\neq\Gamma(2^\omega)(q)$. By continuity at
$1^\omega$, there is a $t$ such that the $q$th output coordinate is fixed
on all extensions of $1^t$. The real $1^t{}^\smallfrown 2^\omega$ is
$E_{\max}$-equivalent to $2^\omega$, but its $q$th output coordinate is
forced to agree with that of $\Gamma(1^\omega)$, again a contradiction.

It remains to compare $E_{\min}$ and $E_{\max}$. Suppose first that
$\Gamma$ continuously reduces $E_{\min}$ to $E_{\max}$. The constant
reals $n^\omega$ are pairwise $E_{\min}$-inequivalent, so their images
are pairwise $E_{\max}$-inequivalent. At most one image has no maximum.
For every other $n$, let $a_n$ be the maximum of
$\operatorname{range}(\Gamma(n^\omega))$. The values $a_n$ are pairwise
distinct, so choose $p<q$ such that both are defined and $a_p<a_q$.
Choose a coordinate $r$ with $\Gamma(q^\omega)(r)=a_q$. By continuity
at $q^\omega$, there is a $t$ such that every $x\sqsupseteq q^t$ satisfies
$\Gamma(x)(r)=a_q$. Now $q^t{}^\smallfrown p^\omega\mathrel{E_{\min}}p^\omega$, so its
image must have maximum $a_p$; however, that image takes the value
$a_q>a_p$ at coordinate $r$, a contradiction.

Conversely, suppose that $\Gamma$ continuously reduces $E_{\max}$ to
$E_{\min}$. For each $n$, let
$b_n:=\min(\operatorname{range}(\Gamma(n^\omega)))$. Since the constant
reals are pairwise $E_{\max}$-inequivalent, the values $b_n$ are pairwise
distinct. Choose $p<q$ with $b_p<b_q$, and choose a coordinate $r$ with
$\Gamma(p^\omega)(r)=b_p$. By continuity at $p^\omega$, there is a $t$
such that every $x\sqsupseteq p^t$ satisfies
$\Gamma(x)(r)=b_p$. But $p^t{}^\smallfrown q^\omega\mathrel{E_{\max}}q^\omega$, so its
image must have minimum $b_q$, while it contains the smaller value
$b_p$, a contradiction. This completes the proof of (b).

For (c), define $\Gamma\colon\omega^\omega\to\omega^\omega$ by
\[
\Gamma(p)(s):=\min\{p(j):j\leq s\}.
\]
This map is continuous, and its values are eventually constant at
$\min(\operatorname{range}(p))$. Hence it reduces $E_{\min}$ to $E_0$.
The reduction is strict because $E_0$ has continuum many equivalence
classes whereas $E_{\min}$ has only countably many.

We next show that $E_{\max}\nleq_{\mathrm{cont}}E_0$. Suppose that
$\Gamma$ were such a reduction. We recursively construct finite strings
$\sigma_n,\tau_n$ and pairwise distinct coordinates $k_n$ such that
\begin{itemize}
    \item $\sigma_n\sqsubseteq\sigma_{n+1}$ and
    $\tau_n\sqsubseteq\tau_{n+1}$;
    \item $\max(\operatorname{range}(\sigma_n))=n$ and
    $\max(\operatorname{range}(\tau_n))=n+1$;
    \item every $x\sqsupseteq\sigma_{n+1}$ and
    $y\sqsupseteq\tau_{n+1}$ satisfy
    $\Gamma(x)(k_n)\neq\Gamma(y)(k_n)$.
\end{itemize}
Start with $\sigma_0=\langle0\rangle$ and $\tau_0=\langle1\rangle$.
Given $\sigma_n$ and $\tau_n$, let
\[
x_n:=\sigma_n^\smallfrown(n+1)^\omega,
\qquad
y_n:=\tau_n^\smallfrown(n+2)^\omega.
\]
The two reals have different maxima, so their images are not
$E_0$-equivalent and therefore disagree at infinitely many coordinates.
Choose a fresh such coordinate $k_n$. By continuity, extend $\sigma_n$
and $\tau_n$ to prefixes $\sigma_{n+1}\sqsubseteq x_n$ and
$\tau_{n+1}\sqsubseteq y_n$ that force this disagreement and contain,
respectively, the values $n+1$ and $n+2$. Let
$x:=\bigcup_n\sigma_n$ and $y:=\bigcup_n\tau_n$. Neither real has a
maximum, so $x\mathrel{E_{\max}}y$, while their images differ at every
coordinate $k_n$ and are therefore not $E_0$-equivalent, a contradiction.

For (d), define $\Gamma\colon\omega^\omega\to\omega^\omega$ by
\[
\Gamma(p)(\langle i,s\rangle)=
\begin{cases}
1,&\text{if }(\exists j\leq s)\,p(j)>i,\\
0,&\text{otherwise.}
\end{cases}
\]
For each fixed $i$, the $i$th column of $\Gamma(p)$ is eventually $1$
exactly when some value of $p$ exceeds $i$, and is identically $0$
otherwise. Thus two reals have the same maximum, or both have no maximum,
exactly when their images are $E_3$-equivalent. Hence
$E_{\max}\leq_{\mathrm{cont}}E_3$. The reduction is strict because
$E_3$ has continuum many equivalence classes whereas $E_{\max}$ has only
countably many.
\end{proof}

\section{Computable universality}
\label{section:effective_learners}

All learners considered so far were arbitrary functions on finite strings.  This is the standard non-effective version of the paradigm and is the right level for the structural results above.  It is nevertheless natural to ask whether the universal tolerance relations isolated by the locking property remain universal when learners are required to be computable.  The answer is no: the semantic coarse-graining supplied by a universal relation may depend on noncomputable information that no computable learner can exploit uniformly.

We first isolate the effective notion explicitly.

\begin{definition}
Let $E^{\mathrm{ce}}$ be an equivalence relation on $\omega$. A family $\K$ of nonempty c.e.\ sets is \define{computably $E^{\mathrm{ce}}$-learnable} if there exists a computable learner $\learnerM\colon\omega^{<\omega}\to\omega$ such that for every $W_j\in\K$ and every enumeration $\mu$ of $W_j$ there is some $i\in\omega$ with
\[
\lim_{s\to\infty}\learnerM(\mu[s])=i
\qquad\text{and}\qquad
i\mathrel{E^{\mathrm{ce}}}j.
\]
An equivalence relation $E^{\mathrm{ce}}$ is \define{computably universal} if every family of nonempty c.e.\ sets is computably $E^{\mathrm{ce}}$-learnable; equivalently, if $\{W_e:e\in\omega,\ W_e\neq\emptyset\}$ is computably $E^{\mathrm{ce}}$-learnable.
\end{definition}

A full analysis of computable $E^{\mathrm{ce}}$-learnability lies beyond the scope of this paper. Our goal here is simply to show that computable universality is strictly stronger than universality.

\begin{definition}
For every set $A\subseteq\omega$, define an equivalence relation $E^{\mathrm{ce}}_A$ by
\[
i\mathrel{E^{\mathrm{ce}}_A}j
\iff
\bigl(W_i,W_j\subseteq A\bigr)\ \lor\ \bigl(W_i,W_j\not\subseteq A\bigr).
\]
\end{definition}

\begin{proposition}
\label{inf_subset_eqs_have_LP}
For every $A\subseteq\omega$, the relation $E^{\mathrm{ce}}_A$ is universal.
\end{proposition}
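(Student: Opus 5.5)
By \Cref{BC_characterized_by_LP}, it suffices to show that $E^{\mathrm{ce}}_A$ has the locking property. So I fix a nonempty c.e.\ set $W_e$ and a sequence $(b_i)_{i\in\omega}$ of indices for finite sets with $W_{b_i}\subseteq W_{b_{i+1}}$ and $\bigcup_i W_{b_i}=W_e$. I must produce $k$ such that $b_i\mathrel{E^{\mathrm{ce}}_A}e$ for all $i\geq k$. Since $E^{\mathrm{ce}}_A$ has at most two classes, determined by whether $W_j\subseteq A$, I split into two cases according to the class of $e$.

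\emph{Case 1: $W_e\subseteq A$.} Every approximation satisfies $W_{b_i}\subseteq W_e\subseteq A$. Hence $b_i\mathrel{E^{\mathrm{ce}}_A}e$ for every $i$, and $k:=0$ is a locking witness.

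\emph{Case 2: $W_e\not\subseteq A$.} Choose $x\in W_e\setminus A$. Since the approximations exhaust $W_e$, there is some $k$ with $x\in W_{b_k}$. By monotonicity, $x\in W_{b_i}$ for all $i\geq k$, so $W_{b_i}\not\subseteq A$ for all $i\geq k$. Thus $b_i\mathrel{E^{\mathrm{ce}}_A}e$ for all $i\geq k$, as required.

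The argument has no genuine obstacle. The only point to note is that the proof is non-effective: the choice of $k$ depends on membership in $A$ (and on deciding which case holds). This is exactly what the subsequent computability result exploits when $A=K$.
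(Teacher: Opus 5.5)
Your proof is correct and follows the paper's argument essentially verbatim. You reduce to the locking property via \Cref{BC_characterized_by_LP}, take $k=0$ when $W_e\subseteq A$, and otherwise take $k$ to be a stage at which a fixed $x\in W_e\setminus A$ has entered the increasing approximations, then use monotonicity.
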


\begin{proof}
By \Cref{BC_characterized_by_LP}, it suffices to show that $E^{\mathrm{ce}}_A$ has the locking property. Let $W_e$ be a nonempty c.e.\ set, and let $(b_i)_{i\in\omega}$ be an increasing sequence of finite approximations whose union is $W_e$.

If $W_e\subseteq A$, then every $W_{b_i}\subseteq A$ as well, so $b_i\mathrel{E^{\mathrm{ce}}_A}e$ for all $i$; thus $0$ is a locking witness.

If $W_e\not\subseteq A$, choose $x\in W_e\setminus A$. Since the approximations exhaust $W_e$, there is some $k$ with $x\in W_{b_k}$. Then for every $j\geq k$, the set $W_{b_j}$ is not contained in $A$. Since also $W_e\not\subseteq A$, we have $b_j\mathrel{E^{\mathrm{ce}}_A}e$ for all $j\geq k$. So $k$ is a locking witness.
\end{proof}

In what follows, let $K:=\{e\in\omega:\varphi_e(e)\downarrow\}$ be the halting set. It is well known that $K$ is c.e.\ but not computable.

The halting set provides a natural test case for the gap between non-effective and effective tolerant learning. The relation $E^{\mathrm{ce}}_K$ is coarse enough to be universal in the abstract sense, but the noncomputability of $K$ prevents any computable learner from exploiting that coarse semantics uniformly.

\begin{theorem}
\label{thm:EK_not_comp_universal}
The relation $E^{\mathrm{ce}}_K$ is universal but not computably universal.
\end{theorem}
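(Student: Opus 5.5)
Universality is immediate from \Cref{inf_subset_eqs_have_LP} with $A=K$. The plan for the second half is to assume a computable learner $\learnerM$ that $E^{\mathrm{ce}}_K$-learns all nonempty c.e.\ sets, and to use it to decide $K$, which is impossible.

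The decision procedure runs as follows. Given $x$, we feed $\learnerM$ the constant enumeration $x,x,x,\dots$ of the singleton $\{x\}$. Since $\{x\}\subseteq K$ iff $x\in K$, the limit hypothesis $i_x$ satisfies $W_{i_x}\subseteq K$ iff $x\in K$. This alone is not decidable, because both the limit and the containment are hard to compute. We therefore refine the approach with the classical locking-sequence argument, combined with a diagonal or fixed-point construction that exploits the fact that $\learnerM$ must also learn larger sets.

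\textbf{Strategy.} We reduce $\overline{K}$, which is not c.e., to a $\Sigma^0_1$ condition. Fix some $k_0\in K$, for instance an index of a total function. For each $x$, consider the two sets $\{k_0\}$ and $\{k_0,x\}$; note $\{k_0\}\subseteq K$.
\begin{itemize}
\item If $x\notin K$, then $\{k_0,x\}\not\subseteq K$, so the learner must eventually move the hypothesis on the enumeration $k_0,k_0,\dots$ followed by $x$'s into the class of non-subsets of $K$.
\item If $x\in K$, both sets lie in the same $E^{\mathrm{ce}}_K$-class.
\end{itemize}
To get a contradiction we use a uniform family with an absorbing-style obstruction that is computable. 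Fix a computable enumeration $(K_s)$ of $K$ and, for each $x$, define uniformly a c.e.\ set $V_x$ whose membership in the class ``$\subseteq K$'' is tied to $x\notin K$. We then run $\learnerM$ on an enumeration built by a finite-extension construction that searches for a $\tau$ changing the output's class. Since that class is not computable, the search must instead target mind changes.

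\textbf{Main construction (recursion theorem).} We build a c.e.\ set $W_e$, using the recursion theorem, that diagonalises against $\learnerM$. We enumerate a text $\sigma_0\sqsubseteq\sigma_1\sqsubseteq\cdots$ and put $\operatorname{range}(\sigma_s)$ into $W_e$. At stage $s$ we search for an extension by a fresh element $y\in K$, chosen from the computable enumeration of $K$, or by an element of $\overline K$, which we handle differently. The aim is to arrange that:
\begin{itemize}
\item whenever $\learnerM$'s current conjecture $i$ has a c.e.\ witness $y\in W_i\setminus K$, which we cannot see, or is seen to become $\not\subseteq K$\dots
\end{itemize}
A cleaner route is to exploit that ``$W_i\not\subseteq K$'' is $\Sigma^0_2$ rather than decidable. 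We plan to reduce a $\Pi^0_2$-complete set $\{n: W_n\text{ infinite}\}$, or rather $\overline{K}$, to ``the limit of a computable learner lies in a given class''. The limit of a computable learner on a computable text is $\Delta^0_2$, and for a $\Delta^0_2$ index $i$ the property $W_i\subseteq K$ is $\Pi^0_2$ relative to $0'$. That relative complexity is too weak to yield a contradiction directly.

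\textbf{Main obstacle.} The central difficulty is therefore to find a genuinely effective contradiction. The first thing I would try is a texts-with-$\sharp$ argument. For each $x$, give the learner the text of $\{a\}$, where $a$ is a fixed element of $\overline{K}$, and append $x$ when $x$ enters $K$. The relevant family is then $\{\{a\}\}\cup\{\{a,x\}: x\in K\}\cup\{\{b\}\}$, where $b\in K$.

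The steps I would try, in order, are:
\begin{enumerate}
\item Choose target sets whose $E^{\mathrm{ce}}_K$-class is controlled by $x\in K$, such as $\{x\}\cup F$ where $F\subseteq K$ is fixed.
\item Take a locking sequence $\sigma$ for $\{b\}$.
\item Show that, for each $x$, the learner on $\sigma$ followed by $x$'s must converge to the class ``$\subseteq K$'' iff $x\in K$.
\item Use the locking property of $\sigma$, together with the fact that converging outputs change only finitely often, to show that, for fixed $\sigma$, the map $x\mapsto$ (eventual output on $\sigma x^\omega$) gives $K$ as a limit. This in itself is no contradiction, since $K$ is $\Delta^0_2$.
\item Finally, use the recursion theorem: choose $x=e$ whose membership in $K$ is defined to flip whenever the learner's output on $\sigma e^\omega$ changes. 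Let $\varphi_e(e)$ halt iff $\learnerM$ ever outputs on $\sigma e^t$ an index $i$ for which we see, within the same computation, some enumerated element outside a finite part of $K$.
\end{enumerate}
This last self-referential step, making halting of $e$ contradict the learner's limiting class, is where I expect the real work to lie.
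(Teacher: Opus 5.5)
Your universality half is correct and is the paper's argument: it is \Cref{inf_subset_eqs_have_LP} with $A=K$. The second half, however, contains no completed argument. You defer the decisive step, and that step as sketched cannot produce a contradiction. Membership of $e$ in $K$ is monotone, so it cannot ``flip'' with the learner's mind changes. If you let $\varphi_e(e)$ halt when some hypothesis $\learnerM(\sigma^\smallfrown e^t)$ shows an element outside a finite stage $K_s$, the most the condition you describe can force is $e\in K$. That outcome is consistent with $\learnerM$ succeeding, since such an element may enter $K$ later.

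The real gap is in your step (2). A locking sequence for the finite set $\{b\}$ only constrains $\learnerM$ on extensions with range inside $\{b\}$. After you append some $x\in K$, the learner may still change its output; it only has to end in the class of subsets of $K$. This is why your step (4) yields only a description of $K$ as a limit.

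The missing idea is to lock on $K$ itself, the largest member of that class. By \Cref{thm:learnability_implies_lock_seq} there is an $E^{\ce}_K$-locking sequence $\sigma$ for $\learnerM$ on $K$. So $W_{\learnerM(\sigma)}\subseteq K$, and $\learnerM(\sigma^\smallfrown\tau)=\learnerM(\sigma)$ whenever $\operatorname{range}(\tau)\subseteq K$. Let $\kappa$ be a computable enumeration of $K$. For $a\in K$ the learner never moves on $\sigma^\smallfrown\langle a\rangle^\smallfrown\kappa[s]$. For $a\notin K$, the enumeration $\sigma^\smallfrown\langle a\rangle^\smallfrown\kappa$ of $K\cup\{a\}$ must converge outside the class of $\learnerM(\sigma)$. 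Hence some finite $s$ gives $\learnerM(\sigma^\smallfrown\langle a\rangle^\smallfrown\kappa[s])\neq\learnerM(\sigma)$. Searching for such an $s$ enumerates exactly $\overline K$, so $K$ would be computable, a contradiction. This is exactly the paper's proof, and no recursion theorem is needed.

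Your two ``Strategy'' bullets were already the right observations. Locking on $K$ is what upgrades ``ends in the same class'' to ``never changes its output''. That turns an observed mind change into a $\Sigma^0_1$ certificate of $a\notin K$.
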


\begin{proof}
By \Cref{inf_subset_eqs_have_LP}, the relation $E^{\mathrm{ce}}_K$ is universal.

Suppose towards a contradiction that $E^{\mathrm{ce}}_K$ is computably universal, witnessed by a computable learner $\learnerM$. Fix an index $k$ such that $W_k=K$.

By \Cref{thm:learnability_implies_lock_seq}, there is an $E^{\mathrm{ce}}_K$-locking sequence $\sigma$ for $\learnerM$ on $K$. Thus:
\begin{itemize}
    \item $\operatorname{range}(\sigma)\subseteq K$,
    \item if $\operatorname{range}(\tau)\subseteq K$, then $\learnerM(\sigma^\smallfrown\tau)=\learnerM(\sigma)$,
    \item $\learnerM(\sigma)\mathrel{E^{\mathrm{ce}}_K}k$.
\end{itemize}
Let $\kappa$ be a total computable enumeration of $K$ (with repetitions allowed), and for each $s\in\omega$ let $\tau_s:=\kappa[s]$.

Hard-coding the finite string $\sigma$, we now define a computable enumeration procedure. At stage $t$, for every $a,s\leq t$, compute $\learnerM(\sigma^\smallfrown\langle a\rangle^\smallfrown\tau_s)$ and compare it with $\learnerM(\sigma)$. If they are different and $a$ has not yet been printed, print $a$.

This procedure enumerates a c.e.\ set. We claim that it enumerates exactly $\overline K$.

First let $a\in K$. Then for every $s$, the range of $\sigma^\smallfrown\langle a\rangle^\smallfrown\tau_s$ is contained in $K$, because $\operatorname{range}(\sigma)\subseteq K$, $a\in K$, and $\operatorname{range}(\tau_s)\subseteq K$. By the locking property of $\sigma$, we therefore have
\[
\learnerM(\sigma^\smallfrown\langle a\rangle^\smallfrown\tau_s)=\learnerM(\sigma)
\qquad\text{for all }s.
\]
So the procedure never prints $a$. Thus no element of $K$ is enumerated.

Now let $a\notin K$. Consider the infinite sequence
\[
\nu_a:=\sigma^\smallfrown\langle a\rangle^\smallfrown\kappa.
\]
This is an enumeration of the c.e.\ set $K\cup\{a\}$. Suppose towards a contradiction that the procedure never prints $a$. Then for every $s$,
\[
\learnerM(\sigma^\smallfrown\langle a\rangle^\smallfrown\tau_s)=\learnerM(\sigma).
\]
Hence, along the enumeration $\nu_a$, the learner stays constantly equal to $\learnerM(\sigma)$ after the point where $a$ is first seen. Since $\learnerM(\sigma)\mathrel{E^{\mathrm{ce}}_K}k$, the learner would converge on $\nu_a$ to a hypothesis in the $E^{\mathrm{ce}}_K$-class of $K$.

But $K\cup\{a\}\not\subseteq K$, while $K\subseteq K$, so the target $K\cup\{a\}$ is not $E^{\mathrm{ce}}_K$-equivalent to $K$. Therefore no correct hypothesis for the enumeration $\nu_a$ can lie in the $E^{\mathrm{ce}}_K$-class of $k$. This contradicts the assumption that $\learnerM$ computably $E^{\mathrm{ce}}_K$-learns all nonempty c.e.\ sets.

So when $a\notin K$, there must exist some $s$ such that
\[
\learnerM(\sigma^\smallfrown\langle a\rangle^\smallfrown\tau_s)\neq\learnerM(\sigma),
\]
and therefore the procedure eventually prints $a$.

We have shown that the procedure enumerates exactly $\overline K$. Thus $\overline K$ is c.e.\ Since $K$ is also c.e., the set $K$ would be computable, contradiction. Therefore $E^{\mathrm{ce}}_K$ is not computably universal.
\end{proof}

So even though universality admits a clean abstract characterisation, its effective counterpart is subtler: non-effective and computable tolerant learning do not coincide.

\section{Conclusion and open questions}

We introduced tolerant learning of c.e.\ sets: final hypotheses are required to be correct only up to a chosen equivalence relation on c.e.\ indices.  Comparing tolerance relations by the families they make learnable yields the tolerant learning hierarchy $\TLH$.  The hierarchy retains the central compactness obstruction of learning from positive data, as expressed by locking sequences, tell-tales, absorbing sets, and the locking-property characterisation of universality.  At the same time, it has a substantial degree-theoretic structure: it is bounded, is a lower semilattice, is not an upper semilattice, is atomless and coatomless, and has width and height $2^{\aleph_0}$.

The benchmark analysis reinforces the main message. Under learn-reducibility, the c.e.\ analogues of $E_0$, $E_1$, $E_3$, and $E_{\mathrm{set}}$ behave in ways that are not predicted by Borel, continuous, or computable reducibility.  In particular, $E^{\ce}_1$, $E^{\ce}_3$, $E^{\ce}_{\mathrm{set}}$, and $E^{\ce}_{\max}$ are pairwise incomparable, while $E^{\ce}_{\min}$ is universal.  The final separation involving $E^{\ce}_K$ shows that even universality itself bifurcates once computability of the learner is imposed.

\smallskip

Several questions remain open.
\begin{enumerate}
    \item Is $\TLH$ dense?  This is the most basic order-theoretic problem left by our analysis: whenever $E^{\ce}<_{\Learn}F^{\ce}$, must there be a degree strictly between them?
    \item Does every computably $E^{\mathrm{ce}}_0$-learnable family remain
computably $E^{\mathrm{ce}}_{\max}$-learnable? Our proof of the corresponding non-effective implication uses the noncomputable test of whether a conjectured c.e.\ set is infinite.
\end{enumerate}

These questions suggest that tolerant learning provides a setting in which semantic approximation, positive-data learning, and degree theory interact directly.

\end{document}

%% file: BorelRed.tikz
 \begin{tikzpicture}
 \node (bottom) at (0,0) {$=_1$};
  \node (Emin) at (0,1) {$E_{min},E_{max}$};
 \node (id) at (0,2) {=};
    \node (E0) at (0,3) {$E_0$};
     \node (E1) at (-1,4) {$E_1$};
      \node (E3) at (1,4) {$E_3$};
\node (Eset) at (1,5) {$E_{set}$};

    \path [-] (bottom) edge node {} (Emin);
    \path [-] (Emin) edge node {} (id);
\path [-] (id) edge node {} (E0);
     \path [-] (E0) edge node {} (E1);
     \path [-] (E0) edge node {} (E3);
     \path [-] (E3) edge node {} (Eset);

\end{tikzpicture}

%% file: contRed.tikz
 \begin{tikzpicture}
 \node (bottom) at (0,0) {$=_1$};
 \node (id) at (-1,1) {=};
 \node (Emax) at (1,1) {$E_{max}$};
 \node (Emin) at (0,1) {$E_{min}$};
    \node (E0) at (0,2) {$E_0$};
     \node (E1) at (-1,3) {$E_1$};
      \node (E3) at (1,3) {$E_3$};
\node (Eset) at (1,5) {$E_{set}$};

    \path [-] (bottom) edge node {} (id);
    \path [-] (bottom) edge node {} (Emax);
 \path [-] (bottom) edge node {} (Emin);
\path [-] (Emin) edge node {} (E0);
\path [-] (id) edge node {} (E0);
     \path [-] (E0) edge node {} (E1);
     \path [-] (E0) edge node {} (E3);
     \path [-] (Emax) edge node {} (E3);
      \path [-] (E3) edge node {} (Eset);

\end{tikzpicture}

%% file: EceHierarchy.tikz
 \begin{tikzpicture}
 \centering
\begin{scope}[every node/.style={thick}]

  \node (bottom) at (0,0) {$\Id_1$};
      \node (Emin) at (-1,1) {$E_{min}^{ce}$};
\node (Emax) at (1,1) {$E_{max}^{ce}$};
\node (id) at (0,2) {$=^{ce}$};
    \node (E0) at (0,3) {$E_0^{ce},E_1^{ce}$};
    \node (E3) at (0,4) {$E_3^{ce}$};
    \node (Eset) at (0,5) {$E_{set}^{ce}$};

\end{scope}

    \path [-] (bottom) edge node {} (Emin);
    \path [-] (bottom) edge node {} (Emax);

     \path [-] (Emin) edge node {} (id);
     \path [-] (Emax) edge node {} (id);
     \path [-] (id) edge node {} (E0);
     \path [-] (E0) edge node {} (E3);
     \path [-] (E3) edge node {} (Eset);

\end{tikzpicture}

%% file: TLH.tikz
 \begin{tikzpicture}
 \centering
\begin{scope}[every node/.style={thick}]

  \node (bottom) at (0,0) {$=^{ce}$};
    \node (E0) at (-1,1) {$E_0^{ce}$};
    \node (E1) at (-2,2.5) {$E_1^{ce}$};
    \node (E3) at (-1,2.5) {$E_3^{ce}$};
    \node (Eset) at (1.5,1) {$E_{set}^{ce}$};
    \node (Emax) at (0,2.5) {$E_{max}^{ce}$};
    \node (top) at (-1,4) {$\Id_1,E_{min}^{ce}$};

\end{scope}

    \path [-] (bottom) edge node {} (E0);
        \path [-] (bottom) edge node {} (Eset);

     \path [-] (E0) edge node {} (E1);
     \path [-] (E0) edge node {} (E3);
     \path [-] (E0) edge node {} (Emax);
     \path [-] (E1) edge node {} (top);
\path [-] (E3) edge node {} (top);
\draw [-] (Eset) to [bend right=30] (top);
\path [-] (Emax) edge node {} (top);

\end{tikzpicture}

%% file: manuscript.bbl
\bigskip

\begin{thebibliography}{BFKRSV24}

\bibitem[ABS17]{AndrewsBadaevSorbi2017Survey}
Uri Andrews, Serikzhan A. Badaev, and Andrea Sorbi.
\newblock A survey on universal computably enumerable equivalence relations.
\newblock In \emph{Computability and Complexity}, Lecture Notes in Computer Science, vol.~10010, pp.~418--451. Springer, Cham, 2017.

\bibitem[ABSM23]{andrews2023structure}
Uri Andrews, Daniel F. Belin, and Luca San Mauro.
\newblock On the structure of computable reducibility on equivalence relations of natural numbers.
\newblock \emph{Journal of Symbolic Logic} 88(3):1038--1063, 2023.

\bibitem[ASM24]{andrews24FSjump}
Uri Andrews and Luca San Mauro.
\newblock Investigating the computable {F}riedman--{S}tanley jump.
\newblock \emph{Journal of Symbolic Logic} 89(2):918--944, 2024.

\bibitem[Ang80]{angluin1980inductive}
Dana Angluin.
\newblock Inductive inference of formal languages from positive data.
\newblock \emph{Information and Control} 45(2):117--135, 1980.

\bibitem[BCJSS26]{BazhenovCiprianiJainSanMauroStephan2026Classifying}
Nikolay Bazhenov, Vittorio Cipriani, Sanjay Jain, Luca San Mauro, and Frank Stephan.
\newblock Classifying different criteria for learning algebraic structures.
\newblock \emph{Annals of Pure and Applied Logic} 177(1):103648, 2026.

\bibitem[BCSM22]{BazhenovCiprianiSanMauro2022MindChange}
Nikolay Bazhenov, Vittorio Cipriani, and Luca San Mauro.
\newblock Calculating the mind change complexity of learning algebraic structures.
\newblock In \emph{Revolutions and Revelations in Computability}, Lecture Notes in Computer Science, vol.~13359, pp.~1--12. Springer, Cham, 2022.

\bibitem[BCSM23]{BazhenovCiprianiSanMauro2023Borel}
Nikolay Bazhenov, Vittorio Cipriani, and Luca San Mauro.
\newblock Learning algebraic structures with the help of Borel equivalence relations.
\newblock \emph{Theoretical Computer Science} 951:113762, 2023.

\bibitem[BFKRSV24]{BazhenovFokinaRosseggerSoskovaVatev2024Text}
Nikolay Bazhenov, Ekaterina Fokina, Dino Rossegger, Alexandra Soskova, and Stefan Vatev.
\newblock Learning families of algebraic structures from text.
\newblock In \emph{Twenty Years of Theoretical and Practical Synergies}, Lecture Notes in Computer Science, vol.~14773, pp.~166--178. Springer, Cham, 2024.

\bibitem[BFSM20]{BazhenovFokinaSanMauro2020Informant}
Nikolay Bazhenov, Ekaterina Fokina, and Luca San Mauro.
\newblock Learning families of algebraic structures from informant.
\newblock \emph{Information and Computation} 275:104590, 2020.

\bibitem[BGJLS23]{BelangerGaoJainLiStephan2023}
David R. B\'elanger, Ziyuan Gao, Sanjay Jain, Wei Li, and Frank Stephan.
\newblock Learnability and positive equivalence relations.
\newblock \emph{Information and Computation} 295(Part~A):104913, 2023.

\bibitem[BSM21]{BazhenovSanMauro2021TuringComplexity}
Nikolay Bazhenov and Luca San Mauro.
\newblock On the Turing complexity of learning finite families of algebraic structures.
\newblock \emph{Journal of Logic and Computation} 31(7):1891--1900, 2021.

\bibitem[BB75]{blum1975toward}
Lenore Blum and Manuel Blum.
\newblock Toward a mathematical theory of inductive inference.
\newblock \emph{Information and Control} 28(2):125--155, 1975.

\bibitem[CL82]{case1982machine}
John Case and Christopher Lynes.
\newblock Machine inductive inference and language identification.
\newblock In \emph{Automata, Languages and Programming}, Lecture Notes in Computer Science, vol.~140, pp.~107--115. Springer, 1982.

\bibitem[CS78]{case1978anomaly}
John Case and Carl H. Smith.
\newblock Anomaly hierarchies of mechanized inductive inference.
\newblock In \emph{Proceedings of the 10th Annual ACM Symposium on Theory of Computing}, pp.~314--319. ACM, 1978.

\bibitem[CS83]{case1983comparison}
John Case and Carl H. Smith.
\newblock Comparison of identification criteria for machine inductive inference.
\newblock \emph{Theoretical Computer Science} 25(2):193--220, 1983.

\bibitem[CJS96]{caseJainSharma1996Anomalous}
John Case, Sanjay Jain, and Arun Sharma.
\newblock Anomalous learning helps succinctness.
\newblock \emph{Theoretical Computer Science} 164(1--2):13--28, 1996.

\bibitem[CHM12]{coskey2012hierarchy}
Samuel Coskey, Joel David Hamkins, and Russell Miller.
\newblock The hierarchy of equivalence relations on the natural numbers under computable reducibility.
\newblock \emph{Computability} 1(1):15--38, 2012.

\bibitem[Fel72]{feldman1972some}
Jerome Feldman.
\newblock Some decidability results on grammatical inference and complexity.
\newblock \emph{Information and Control} 20(3):244--262, 1972.

\bibitem[FFN12]{FokinaFriedmanNies2012Sigma3}
Ekaterina B. Fokina, Sy-David Friedman, and Andr\'{e} Nies.
\newblock Equivalence relations that are $\Sigma^0_3$-complete for computable reducibility.
\newblock In \emph{Logic, Language, Information and Computation (WoLLIC 2012)}, Lecture Notes in Computer Science, vol.~7456, pp.~26--33. Springer, Berlin, Heidelberg, 2012.
\newblock \href{https://doi.org/10.1007/978-3-642-32621-9_2}{doi:10.1007/978-3-642-32621-9\_2}.

\bibitem[FKSM19]{FokinaKotzingSanMauro2019EquivalenceStructures}
Ekaterina Fokina, Timo K\"otzing, and Luca San Mauro.
\newblock Limit learning equivalence structures.
\newblock In \emph{Proceedings of the 30th International Conference on Algorithmic Learning Theory}, Proceedings of Machine Learning Research, vol.~98, pp.~383--403. PMLR, 2019.

\bibitem[Gao09]{gao2008invariant}
Su Gao.
\newblock \emph{Invariant Descriptive Set Theory}.
\newblock CRC Press, Boca Raton, FL, 2009.

\bibitem[GG01]{gao2001computably}
Su Gao and Peter Gerdes.
\newblock Computably enumerable equivalence relations.
\newblock \emph{Studia Logica} 67(1):27--59, 2001.

\bibitem[Gol67]{Gold67}
E. Mark Gold.
\newblock Language identification in the limit.
\newblock \emph{Information and Control} 10(5):447--474, 1967.

\bibitem[HKL90]{harringtonKechrisLouveau1990GlimmEffros}
Leo A. Harrington, Alexander S. Kechris, and Alain Louveau.
\newblock A Glimm--Effros dichotomy for Borel equivalence relations.
\newblock \emph{Journal of the American Mathematical Society} 3(4):903--928, 1990.

\bibitem[HS07]{HarizanovStephan2007VectorSpaces}
Valentina S. Harizanov and Frank Stephan.
\newblock On the learnability of vector spaces.
\newblock \emph{Journal of Computer and System Sciences} 73(1):109--122, 2007.

\bibitem[JORS99]{jain1999systems}
Sanjay Jain, Daniel Osherson, James Royer, and Arun Sharma.
\newblock \emph{Systems That Learn: An Introduction to Learning Theory}.
\newblock 2nd ed., MIT Press, 1999.

\bibitem[JS03]{JainStephan2003TourRobust}
Sanjay Jain and Frank Stephan.
\newblock A tour of robust learning.
\newblock In \emph{Computability and Models}, pp.~215--247. Springer, Boston, MA, 2003.

\bibitem[LGZ05]{LangeGrieserZeugmann2005Approx}
Steffen Lange, Gunter Grieser, and Thomas Zeugmann.
\newblock Inductive inference of approximations for recursive concepts.
\newblock \emph{Theoretical Computer Science} 348(1):15--40, 2005.

\bibitem[LZZ08]{LangeZeugmannZilles2008Survey}
Steffen Lange, Thomas Zeugmann, and Sandra Zilles.
\newblock Learning indexed families of recursive languages from positive data: a survey.
\newblock \emph{Theoretical Computer Science} 397(1--3):194--232, 2008.

\bibitem[MS04]{MerkleStephan2004Trees}
Wolfgang Merkle and Frank Stephan.
\newblock Trees and learning.
\newblock \emph{Journal of Computer and System Sciences} 68(1):134--156, 2004.

\bibitem[OW82]{osherson1982criteria}
Daniel N. Osherson and Scott Weinstein.
\newblock Criteria of language learning.
\newblock \emph{Information and Control} 52(2):123--138, 1982.

\bibitem[Put65]{putnam1965trial}
Hilary Putnam.
\newblock Trial and error predicates and the solution to a problem of Mostowski.
\newblock \emph{Journal of Symbolic Logic} 30(1):49--57, 1965.

\bibitem[Roy86]{royer1986Approximations}
James S. Royer.
\newblock Inductive inference of approximations.
\newblock \emph{Information and Control} 70(2--3):156--178, 1986.

\bibitem[RSS25]{RosseggerSlamanSteifer2025Polish}
Dino Rossegger, Theodore Slaman, and Tomasz Steifer.
\newblock Learning equivalence relations on Polish spaces.
\newblock \emph{The Journal of Symbolic Logic}, First View, pp.~1--19, 2025.
\newblock \href{https://doi.org/10.1017/jsl.2025.7}{doi:10.1017/jsl.2025.7}.

\bibitem[SV90]{smithVelauthapillai1990ApproximatePrograms}
Carl H. Smith and Mahendran Velauthapillai.
\newblock On the inference of approximate programs.
\newblock \emph{Theoretical Computer Science} 77(3):249--266, 1990.

\bibitem[SV01]{StephanVentsov2001LearningText}
Frank Stephan and Yuri Ventsov.
\newblock Learning algebraic structures from text.
\newblock \emph{Theoretical Computer Science} 268(2):221--273, 2001.

\bibitem[Wha74]{wharton1974approximate}
R. Michael Wharton.
\newblock Approximate language identification.
\newblock \emph{Information and Control} 26(3):236--255, 1974.

\end{thebibliography}
